\newenvironment{pf}{\proof[\proofname]}{\endproof}
\theoremstyle{plain}
\newtheorem{Th}{Theorem}[section]
\newtheorem{Cor}[Th]{Corollary}
\newtheorem{Prop}[Th]{Proposition}
\newtheorem{Lemma}[Th]{Lemma}
\numberwithin{equation}{section}
\numberwithin{figure}{section}
\theoremstyle{definition}
\newtheorem{Rem}[Th]{Remark}
\newtheorem{Ex}[Th]{Example}
\newtheorem{Def}[Th]{Definition}
\newtheorem{Prob}[Th]{Problem}
\newcommand{\cal}[1]{\mathcal{#1}}
\newcommand{\N}{\mathbb N}
\newcommand{\Z}{\mathbb Z}
\newcommand{\R}{\mathbb R}
\newcommand{\F}{\mathbb F}
\newcommand{\D}{\Delta}
\newcommand{\cA}{\cal A}
\newcommand{\cC}{\cal C}
\newcommand{\cL}{\cal L}
\newcommand{\cO}{\cal O}
\newcommand{\Sig}{\Sigma}
\newcommand{\cchar}{\operatorname{char}}
\newcommand{\conv}{\operatorname{conv}}
\newcommand{\Vol}{\operatorname{Vol}}
\newcommand{\GL}{\operatorname{GL}}
\newcommand{\AGL}{\operatorname{AGL}}
\newcommand{\Gal}{\operatorname{Gal}}
\newcommand{\w}{\operatorname{w}}
\DeclarePairedDelimiter\floor{\lfloor}{\rfloor}
\newcommand{\rs}[1]{Section~\ref{S:#1}}
\newcommand{\rl}[1]{Lemma~\ref{L:#1}}
\newcommand{\rp}[1]{Proposition~\ref{P:#1}}
\newcommand{\rr}[1]{Remark~\ref{R:#1}}
\newcommand{\rex}[1]{Example~\ref{Ex:#1}}
\newcommand{\re}[1]{(\ref{e:#1})}
\newcommand{\rc}[1]{Corollary~\ref{C:#1}}
\newcommand{\rt}[1] {Theorem~\ref{T:#1}}
\newcommand{\rf}[1]{Figure~\ref{F:#1}}
\newcommand{\rpr}[1]{Problem~\ref{Pr:#1}}
\begin{document}


\title[$\F_q$-zeros of sparse trivariate polynomials and toric 3-fold codes]{$\F_q$-zeros of sparse trivariate polynomials\\ and toric 3-fold codes}
\author[Kyle Meyer]{Kyle Meyer}
\author[Ivan Soprunov]{Ivan Soprunov}
\author[Jenya Soprunova]{Jenya Soprunova}
\address[Kyle Meyer]{Department of Mathematics\\ UC San Diego\\ San Diego, CA USA}
\email{kpmeyer@ucsd.edu}
\address[Jenya Soprunova]{Department of Mathematical Sciences\\ Kent State University\\ Kent, OH USA}
\email{esopruno@kent.edu}
\address[Ivan Soprunov]{Department of Mathematics and Statistics\\ Cleveland State University\\ Cleveland, OH USA}
\email{i.soprunov@csuohio.edu}
\keywords{polynomials over finite fields, sparse polynomials,  toric codes, lattice polytopes, Minkowski length}
\subjclass[2020]{Primary 11T06, 52B10, 52B20; Secondary 11T71, 14M25, 14G50, 52B55}

\date{}

\begin{abstract} 

For a given lattice polytope $P\subset\mathbb{R}^3$, consider the space $\cL_P$  of trivariate polynomials over a finite field $\mathbb{F}_q$, whose Newton polytopes are contained in $P$.
We give upper bounds for the maximum number of $\F_q$-zeros 
of polynomials in $\cL_P$ in terms of the Minkowski length of $P$ and $q$, the size of the field. Consequently, this produces
lower bounds for the minimum distance of toric codes defined by evaluating elements of  $\cL_P$ at the points of the algebraic torus $(\F_q^*)^3$.
Our approach is based on understanding factorizations of polynomials in $\cL_P$ with the largest possible number of non-unit factors. The related combinatorial result that we obtain is a description of Minkowski sums of lattice polytopes contained in $P$ with the largest possible number of non-trivial summands. 
\end{abstract}

\maketitle


\section{Introduction}

Let $X$ be a projective algebraic variety  defined over a finite field $\F_q$.
It is a classical problem to estimate the number $N_{X}$ of $\F_q$-points of $X$ in terms of its geometric invariants. 
For curves, the following estimate is due to Hasse and Weil:
$$|N_{X}-(q+1)|\leq 2gq^\frac{1}{2},$$ 
where $X$ is an irreducible curve with arithmetic genus $g$, \cite{AP, LW}. In particular, for smooth plane curves we have $2g=(d-1)(d-2)$, where $d$ is the degree of the curve. In arbitrary dimension such an estimate is provided by the Lang-Weil bound: For an irreducible projective variety 
$X$ of dimension $n$ and degree $d$ defined over $\F_q$ one has
\begin{equation}\label{e:proj}
|N_{X}-\pi_n|\leq (d-1)(d-2)q^{n-\frac{1}{2}}+Cq^{n-1},
\end{equation}
for some constant $C$ depending on $n$, $d$, and $m$ only, see \cite{LW}. Here $\pi_n$ denotes the number of $\F_q$-points in the 
$n$-dimensional projective space, $\pi_n=\frac{q^{n+1}-1}{q-1}$. 
More recently, Ghorpade and Lachuad proved an effective version of this inequality for complete intersections, \cite{GL}. 
In addition, they obtained analogous inequalities for affine varieties. For example, for an affine hypersurface $X_f\subset \bar\F_q^n$ (so $\dim X_f=n-1$) defined by an absolutely irreducible polynomial
$f\in\F_q[x_1,\dots,x_n]$ of degree $d$ there is an explicit bound
\begin{equation}\label{e:affine}
|N_{X_f}-q^{n-1}|\leq (d-1)(d-2)q^{n-\frac{3}{2}}+12(d+3)^{n+1}q^{n-2},
\end{equation}
see \cite[11.3]{GL}. Note that  $N_{X_f}$, the number of $\F_q$-points in $X_f$, is the same as the
number of $\F_q$-zeros of $f$ in  $\F_q^n$ and $q^{n-1}$ is the number of $\F_q$-points in the $(n-1)$-dimensional
affine space.

In this paper we are dealing with polynomials $f$ with prescribed sets of monomials and arbitrary coefficients. As the number of monomials of $f$ can
be significantly smaller compared to a generic polynomial of a given degree, such polynomials are often called {\it sparse}. 
The theory of sparse polynomials (a.k.a. Newton polytope theory) proposes that many algebraic properties of polynomials are reflected in their Newton polytopes.
Let $f\in\F_q[x_1^{\pm 1},\dots,x_n^{\pm 1}]$ be a Laurent polynomial over $\F_q$.
The set of the exponent vectors of the monomials appearing in $f$ is  the {\it support}  of $f$, denoted by $\cA(f)$, so we can write
\begin{displaymath}
f=\sum_{a \in \cA(f)} c_{a}x^{a}, \text{ where } x^{a}=x_{1}^{a_{1}} \dotsm x_{n}^{a_{n}},\ c_{a} \in \F_q^*.
\end{displaymath} 
We put $\cA(f)=\varnothing$ when $f=0$.
The {\it Newton polytope} $P_f$  is the convex hull of the support of $f$.
For any two Laurent polynomials $f,g$ we have $P_{fg}=P_f+P_g$, so one can view the Newton polytope as a natural generalization of the degree of a polynomial.
The sum here is the {\it Minkowski sum} of the polytopes, which is the set of all sums $v_1+v_2$ for all
pairs $v_1\in P_f$ and $v_2\in P_g$. Therefore, factorizations of a sparse polynomial correspond to Minkowski sum decompositions of its Newton polytope. 

\begin{Ex}\label{Ex:one}
Consider $f=1-x+z-x^ay^bz$ for some relatively prime $a,b\in\Z$. Then $P_f$ is a lattice simplex with vertex set
$\{0,e_1,e_3,ae_1+be_2+e_3\}$. Note that if $P_f$ is the Minkowski sum of two lattice polytopes then one of them must be a single point. 
This means that if $f=hg$ then either $h$ or $g$ is a monomial, i.e. a unit in $\F_q[x^{\pm 1},y^{\pm 1},z^{\pm 1}]$. Therefore, $f$ is absolutely irreducible. For the same reason {\it every} non-zero polynomial of the form $f=c_0+c_1x+c_2z+c_3x^ay^bz$ for $c_i\in\F_q$ is either a unit or
absolutely irreducible. 
\end{Ex}

When dealing with sparse polynomials $f$ it is natural to look for $\F_q$-zeros of $f$  none of whose coordinate is zero, i.e., those contained in the {\it algebraic torus} $(\F_q^*)^n$. Throughout the paper we use $N_f$ to denote
this number, i.e.
$$N_f=|\{p\in(\F_q^*)^n : f(p)=0\}|.$$
We will next illustrate how $N_f$ depends on  geometric invariants of $P_f$ (rather than the degree of $f$) such as 
its volume. As it is common in the theory of sparse polynomials, we use $\Vol_n(P)$ to denote the {\it normalized} $n$-dimensional volume of a polytope which is the usual Euclidean volume multiplied by $n!$.

Consider $f=1-x^ay^b\in\F_q[x^{\pm 1},y^{\pm 1}]$ for some relatively prime $a,b\in\N$. This is an irreducible polynomial of degree $a+b$. However, we can make a change of variables $x=u^rv^{-b}$, $y=u^sv^a$ where $r,s\in\Z$ satisfy $ar+bs=1$. This change of variables preserves the number of zeros in  $(\F_q^*)^2$ and turns $f$ into a linear function $f=1-u$ which has $q-1$ zeros in$(\F_q^*)^2$. The geometric reason here is that the Newton polytope $P_f$ is a segment with exactly two lattice points which can be transformed to the unit segment $[0,e_1]$ by an integer linear map. In general, the affine unimodular group $\AGL(n,\Z)$ acts on the Newton polytope $P_f$ and the corresponding monomial changes of variables preserve $N_f$, see \rs{monomial-change}.

\begin{Ex}\label{Ex:two} 
Let $f=1-x+z-x^ay^bz$ as in \rex{one}. To estimate $N_f$ first note that if $x=1$ and $y^b=1$ then $(x,y,z)\in(\F_q^*)^3$ is a zero of $f$ for any $z\in\F_q^*$. If $x\neq 1$ and $x^ay^b\neq 1$ then $(x,y,z)$ is a zero of $f$ for a unique $z\in\F_q^*$. Also, no other $(x,y,z)\in(\F_q^*)^3$ can be a zero of $f$. In the first case we have at most $b(q-1)$ zeros in $(\F_q^*)^3$ and in the second case we have at most $(q-1)^2-2(q-1)+b$ zeros by the inclusion-exclusion principle. Therefore, $N_f\leq (q-1)^2+(b-2)q+2=(q-1)^2+(\Vol_3(P_f)-2)q+2$. See \rt{VolumeBound} for a generalization of this bound. 
\end{Ex}

A related question that we are interested in is how to estimate the number of $\F_q$-zeros of polynomials within a family of sparse polynomials
whose Newton polytopes are contained in some fixed lattice polytope $P$. Our motivation comes from minimum distance estimation for
a class of error-correction codes called {\it toric codes}. In addition, this question is related to factorization properties of sections of line bundles on a toric variety defined over finite fields. We explain both connections in \rs{motivation} below. Given a lattice polytope $P$ in $\R^n$, denote by $\cL_P$ 
the family of sparse polynomials whose Newton polytope is contained in $P$:
\begin{equation}\label{e:L-def}
\cL_P=\{f\in\F_q[x_1^{\pm 1},\dots,x_n^{\pm 1}] : P_f\subseteq P\}.
\end{equation}

\begin{Prob}\label{Pr:all} 
Give a (sharp) upper bound on the number of $\F_q$-zeros in $(\F_q^*)^n$ of non-zero polynomials $f\in\cL_P$ in terms of geometric invariants of $P$.
\end{Prob}

 For example, let $d\D^n$ be the $n$-dimensional simplex with vertex set $\{0,de_1,\dots, de_n\}$ for some positive integer $d< q$. Then 
 $\cL_{d\D^n}$, which we denote  simply by $\cL_d$,  
  is the space of all $n$-variate polynomials of degree at most $d$. It is not hard to see that 
$$\max\{N_f : 0\neq f\in\cL_d\}= d(q-1)^{n-1}.$$
Indeed, it is clear that for distinct $\alpha_1,\dots, \alpha_d$ in $\F_q^*$ the polynomial $f=(x_1-\alpha_1)\cdots(x_1-\alpha_d)$ lies in $\cL_d$ and has exactly $d(q-1)^{n-1}$ zeros in $(\F_q^*)^n$, which is the largest among all polynomials with $d$ linear factors. 
One can then use induction on $n$ to show that any polynomial of degree at most $d$ satisfies the above bound (see \rp{simplex}). This observation motivates a potentially easier problem.
\begin{Prob}\label{Pr:max}  
Give a (sharp) upper bound,  in terms of geometric invariants of $P$, on the number of $\F_q$-zeros in $(\F_q^*)^n$ of polynomials $f\in\cL_P$ that have the largest possible number of non-unit factors.
\end{Prob}

We remark that a solution to \rpr{max} provides a solution to \rpr{all} when $q$ is large enough. Indeed, let 
$f=f_1\cdots f_k$ be a factorization of $f\in\cL_P$. Applying \re{proj} to $X_{f_i}$ and their intersections it is not hard to show that
$|X_{f}(\F_q)|=kq^{n-1}+o(q^{n-1})$. Therefore, for large enough $q$, $|X_{f}(\F_q)|$ is maximal when $k$ is. 

Now let $f\in\cL_P$ be a polynomial with the largest possible number $L$ of non-unit factors and let 
$f=f_1\cdots f_L$. Note that each $f_i$ must be absolutely irreducible. Moreover, it will stay absolutely irreducible no matter how we change its coefficients (as long as at least two of the coefficients are non-zero), otherwise we would have obtained a polynomial in $\cL_P$ with more than $L$ non-unit factors. Continuing our analogy between degrees and Newton polytopes,
each $f_i$ is a ``linear factor" whose irreducibility is due to its ``degree" (i.e. its Newton polytope) rather than its coefficients. The difficulty here is that
the Newton polytope of $f_i$ can be arbitrarily large in volume and  $f_i$ can have many more $\F_q$-zeros than a linear polynomial, as we saw in Examples~\ref{Ex:one} and~\ref{Ex:two}. Although this may not be an issue for special classes of polytopes, e.g. for $P=d\D^n$ considered above,
for arbitrary polytopes this difficulty appears in any dimension $n\geq 3$.
When $n=2$ the situation is more manageable as described in \cite{SoSo1}, which gave a solution to both Problems~\ref{Pr:max} and \ref{Pr:all} for $n=2$, i.e. for bivariate sparse polynomials.
We next present our approach to addressing the aforementioned difficulty  for $n=3$.

\subsection{Our approach and results}\label{S:approach}
As  noted above, factorizations $f=f_1\cdots f_k$ for $f\in\cL_P$ correspond to Minkowski sums of the corresponding Newton polytopes. 
Thus, the geometric invariant that we need to consider in \rpr{max} is the largest number $L$ such that there exist lattice polytopes of positive dimension
$P_1,\dots, P_L$ satisfying $P_1+\dots+P_L\subseteq P$. This invariant is called the {\it Minkowski length} of $P$ and is denoted $L(P)$. 
For example, $L(d\D^n)=d$ (see \rs{mink-length}). In general, there are no simple formulas for $L(P)$, but there exists a polynomial time algorithm for computing $P$ in dimensions 2 and~3, see \cite{BGSW,SoSo1}. 
 
In the current paper we concentrate on the case $n=3$, i.e. the case of trivariate sparse polynomials. 
We next formulate our answer to \rpr{max} for $n=3$.
We write $[0,q-2]^n$ for the $n$-dimensional coordinate cube of side length $q-2$. 

\begin{Th}[\rc{max}]\label{T:intro-main} 
Let $P\subseteq[0,q-2]^3$ be a lattice polytope of Minkowski length~$L$. 
Assume $\cchar(\F_q)>41$ and $q\geq (c+\sqrt{c^2+1})^2$, where $c=\frac{1}{8}\left(\Vol_3(P)-3L+3\right)$. 
Then $$N_f\leq L\,(q-1)^2+2(q-1)(\floor{2\sqrt{q}}-1)$$
for any $f\in\cL_P$ with the largest possible number of absolutely irreducible factors.
\end{Th}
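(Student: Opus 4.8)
The plan is to split $f$ into its absolutely irreducible factors, bound the number of torus zeros of each factor separately, and recombine, using a classification of the lattice polytopes that can occur as the Newton polytopes of such factors.

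\emph{Reduction to the factors.} Since $L$ is the Minkowski length of $P$ and absolutely irreducible factors are non-units, a polynomial with the largest possible number of them has exactly $L$: write $f=f_1\cdots f_L$ with each $f_i$ absolutely irreducible and $Q_i:=P_{f_i}$ of positive dimension, so $Q_1+\dots+Q_L\subseteq P$. The zero set of $f$ in $(\F_q^*)^3$ is the union of the zero sets of the $f_i$, hence $N_f\le\sum_{i=1}^L N_{f_i}$; and since $\AGL(3,\Z)$ acts on Newton polytopes by monomial substitutions preserving each $N_{f_i}$ (\rs{monomial-change}), we may freely replace each $Q_i$ by a unimodularly equivalent polytope. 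Maximality of $L$ forces each $f_i$ to be \emph{strongly irreducible}: any reassignment of its nonzero coefficients, keeping at least two of them nonzero, leaves it absolutely irreducible, since otherwise $f$ with $f_i$ replaced would lie in $\cL_P$ with more than $L$ non-unit factors. In particular $Q_i$, and indeed the convex hull of any subset of its lattice points, is Minkowski-indecomposable (as in the argument of \rex{one}).

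\emph{The combinatorial classification.} The key input is the description of the decompositions $Q_1+\dots+Q_L\subseteq P$ with $L$ maximal. Its expected form: up to $\AGL(3,\Z)$, each $Q_i$ is a primitive segment, a unimodular triangle, one of finitely many exceptional planar polygons, or one of finitely many families of exceptional three-dimensional polytopes — the latter modeled on the simplices $\conv\{0,e_1,e_3,ae_1+be_2+e_3\}$ of \rex{one}, which can have arbitrarily large volume. The hypothesis $\cchar(\F_q)>41$ is precisely what makes this the complete list of strongly irreducible types, ruling out the finitely many polytopes that fail to be strongly irreducible only in small characteristic. Moreover, all but a controlled number of the $Q_i$ are primitive segments or unimodular triangles, and super-additivity of normalized volume under Minkowski sums bounds the total volume of the remaining exceptional summands by $\Vol_3(P)-3L+3$; together with the constraint $P\subseteq[0,q-2]^3$ (which keeps monomial exponents distinct modulo $q-1$), this is the combinatorial reason for the shape of the hypothesis on $q$.

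\emph{Counting zeros and assembling.} A primitive segment turns $f_i$, after a monomial substitution, into $c_0+c_1x_1$, with exactly $(q-1)^2$ torus zeros; a unimodular triangle turns it into $c_0+c_1x_1+c_2x_2$, with at most $(q-1)^2$. So each segment and triangle factor contributes at most $(q-1)^2$, and it remains to bound $\sum_{Q_i\ \text{exceptional}}\bigl(N_{f_i}-(q-1)^2\bigr)$. For an exceptional planar $Q_i$ one has $f_i=g_i(x_1,x_2)$ with $g_i$ absolutely irreducible of prescribed small Newton polygon, so $N_{f_i}=(q-1)\,|\{p\in(\F_q^*)^2:g_i(p)=0\}|$; a Weil-type estimate for curves in the torus bounds the latter count by $q-1$ plus a small multiple (governed by the genus) of $\floor{2\sqrt{q}}-1$, which is the source of the term $2(q-1)(\floor{2\sqrt{q}}-1)$. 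For an exceptional three-dimensional $Q_i$ I would run the elimination argument of \rex{two}: some variable occurs to the first power, so off a proper subvariety the zero is unique in that variable, and on that subvariety one is reduced to a binomial equation; combined with the volume bound generalizing \rex{two} (\rt{VolumeBound}) this yields $N_{f_i}\le(q-1)^2+(\Vol_3(Q_i)-2)(q-1)+O(\Vol_3(Q_i))$. Summing over the exceptional factors, substituting $\sum\Vol_3(Q_i)\le\Vol_3(P)-3L+3$, and invoking the hypothesis $q\ge(c+\sqrt{c^2+1})^2$ — equivalently $\tfrac14(\Vol_3(P)-3L+3)\le\sqrt q-\tfrac1{\sqrt q}$, i.e. $\Vol_3(P)-3L+3\le\tfrac{4(q-1)}{\sqrt q}$ — the total excess is at most $2(q-1)(\floor{2\sqrt{q}}-1)$, completing the bound.

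\emph{Main obstacle.} The heart of the argument is the genuinely three-dimensional content: proving that strong irreducibility forces the Newton polytope onto the explicit list (this is where the characteristic bound enters and where the large-volume exceptional simplices appear), and that each polytope on the list, for \emph{arbitrary} nonzero $\F_q$-coefficients, has at most $(q-1)^2$ plus a volume-controlled number of torus zeros. The planar Weil estimates, the super-additivity bound on the exceptional volume, and the final arithmetic — checking that the constants $\tfrac18$, $3L-3$, and $41$, together with $P\subseteq[0,q-2]^3$, are calibrated so the excess never exceeds $2(q-1)(\floor{2\sqrt{q}}-1)$ (a computation that is tight when $q$ is a perfect square and $\Vol_3(P)$ is maximal) — are comparatively routine once those are established.
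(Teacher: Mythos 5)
Your overall architecture matches the paper's: factor $f=f_1\cdots f_L$ into its $L$ absolutely irreducible pieces (necessarily $L=L(P)$), bound $N_f\le\sum N_{f_i}$, and then classify the possible tuples $(P_{f_1},\dots,P_{f_L})$ to control the excess $\sum(N_{f_i}-(q-1)^2)$. But three of your steps are either wrong or elide the hard part.

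First, you misattribute the role of the hypothesis $\cchar(\F_q)>41$. It has nothing to do with making the combinatorial classification of Newton polytopes with $L(Q)=1$ complete --- that classification (Whitney, Blanco--Santos) is purely about lattice polytopes and holds over $\Z$, independent of any field. The characteristic bound is an analytic input: it is the hypothesis under which Whitney's Grothendieck--Lefschetz computation applies, ensuring the hypersurface $X_{f_i}$ has at worst isolated singularities so that $N_{f_i}\le(q-1)^2+(\Vol_3(Q_i)-F(Q_i)/2)q+F(Q_i)/2$ for the $108$ classes of width $>1$ polytopes (\rt{JoshFinite}, \rr{char41}). Your elimination trick from \rex{two} (``some variable occurs to the first power'') handles only the width-one families; it cannot touch the width $>1$ classes, which include $K_1$, $K_2$, and the larger Fano and clean simplices, and for those the cohomological argument is unavoidable. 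You cite \rt{VolumeBound} as if it followed from the elimination argument alone, but its proof needs both \rp{Infinite} (width one) and \rt{JoshFinite} (width $>1$).

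Second, and more structurally, you are missing the key dichotomy that makes the final arithmetic work. You write ``super-additivity of normalized volume ... bounds the total volume of the remaining exceptional summands by $\Vol_3(P)-3L+3$,'' and then you sum the volume-controlled excess over all exceptional $Q_i$. But the volume inequality $\Vol_3(Q_1)\le\Vol_3(P)-3(L-1)$ (from \rl{VolumeBound}) is applied in the paper's argument only when there is a \emph{single} summand $Q_1$ with $\ge 4$ lattice points (case $k=1$ of \rt{max-a}). The whole point of Sections 3 and 4 is that if two or more summands each have $\ge 4$ lattice points, then every such summand belongs to the fixed finite list $\{T_0,S_1,S_2,E,K_1,K_2\}$ of Table~\ref{table}: in particular, its normalized volume is at most $5$ and its zero count is bounded by a fixed $O(q)$ or $O(q^{3/2})$ quantity regardless of $\Vol_3(P)$. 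Without separating ``one large summand'' from ``several bounded summands,'' the assembly at the end does not close: a large-volume summand paired with other $\ge 4$-point summands would apparently break the bound, and you have no argument ruling this out, whereas \rt{two_empty_tetrahedra}, \rt{T_0+four}, \rt{5plus4}, \rt{5plus5}, \rt{6plus3}, and \rc{4-and-more} are exactly what rule it out.

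Third, the factor of $2$ in $2(q-1)(\floor{2\sqrt{q}}-1)$ does not come from ``a small multiple governed by the genus'' of a single curve; it comes from the extremal case $k=2$ with both $P_{f_1},P_{f_2}$ equivalent to $T_0$, each contributing excess $(q-1)(\floor{2\sqrt{q}}-1)$ via the Hasse--Weil bound (\rp{special}). Your threshold $q\ge(c+\sqrt{c^2+1})^2$, which you correctly unpack to $\Vol_3(P)-3L+3\le 4(q-1)/\sqrt q$, is calibrated precisely to make the single-large-summand bound (case $2$b of \rt{max-a}, namely $L(q-1)^2+(\Vol_3(P)-3L+1)q+2$) not exceed the two-$T_0$ bound. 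That is the content of \rc{max}, and your proposal never actually performs this case-by-case comparison; it gestures at it. To repair the argument you would need to (i) separate the $k=0,1,2,\ge3$ cases of \rt{max-a}, (ii) invoke the combinatorial classification precisely to bound each, and (iii) check that under the hypothesis on $q$ the $k=2$ bound dominates.
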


\begin{center}
  \begin{table}
  \renewcommand{\arraystretch}{1.2}
  \begin{tabular}{|c|c|c|c|}
    \hline
    $P$ & vertices & $\Vol_3(P)$ & property \\ 
    \hline
    $T_0$ & $\{e_1,\, e_2,\, -e_1-e_2\}$ & 0 & 2-dim \\ 
    $S_1$ & $\{0,e_1,\, e_2,\, e_3\}$ & 1 & width one \\ 
    $S_2$ & $\{e_1,\, e_2,\, e_3,\, e_1+e_2+e_3\}$  & 2 & width one\\ 
    $E$ & $\{0,e_1,\, e_2,\, e_3,\, e_1+e_2+e_3\}$ & 3 &  width one\\ 
    $K_1$ & $\{e_1,\, e_2,\, e_3,\, -e_1-e_2-e_3\}$  & 4 & Fano \\ 
    $K_2$  & $\{e_1,\, e_2,\, e_1+e_2+2e_3,\, -e_1-e_2-e_3\}$  & 5 & Fano \\ 
\hline
    \end{tabular}
    \vspace{.2cm}
\caption{Possible summands with 4 or more lattice points in a maximal decomposition}
\label{table}
  \end{table}
\end{center}

Despite the fact that the combinatorics of polytopes is much richer starting with $n=3$, the
above bound turned out to be quite similar to the one for $n=2$, which says that
for any $P\subset\R^2$, the number of zeros in $(\F_q^*)^2$ of any $f\in\cL_P$ 
with the largest possible number of absolutely irreducible factors is at most $L\,(q-1)+\floor{2\sqrt{q}}-1$, see \cite[Prop 2.3]{SoSo1}.
The additional factor of $(q-1)$ is expected due to the dimension change, but, remarkably, the combinatorics only contributed to the
extra factor of 2 and additional assumptions on the size of $q$.

To give a solution to \rpr{all} for $n=3$, we show that the bound in \rt{intro-main} holds for large enough $q\geq\alpha(P)$ where the threshold 
$\alpha(P)$ also depends only on the invariants $L(P)$ and $\Vol_3(P)$, see \rt{all}.

Our approach to solving \rpr{max} for $n=3$ is based on first understanding what maximal Minkowski decompositions $P_1+\dots+P_L\subseteq P$ 
may look like for arbitrary $P\subset\R^3$. It follows from the definition of $L=L(P)$ that in any such maximal decomposition we must
have $L(P_i)=1$, $L(P_i+P_j)=2$, $L(P_i+P_j+P_k)=3$, and so on. 

The classification of lattice polytopes with $L(P)=1$, up to the $\AGL(3,\Z)$-action,
was first obtained by Joshua Whitney in his Ph.D. thesis \cite{Josh}. Independently, Blanco and Santos gave such a classification in \cite{BlancoSantos3} where they call lattice polytopes with $L(P)=1$ { distinct pair-sums polytopes} (dps-polytopes) after \cite{CLR}. There are 108 classes of dps-polytopes of lattice width greater than one and infinite families of classes of lattice polytopes of  lattice width one.
For comparison, the classification of planar lattice polytopes with $L(P)=1$, up to the $\AGL(2,\Z)$-action, consists of only three
classes: segments with exactly two lattice points,  triangles with exactly three lattice points, and  triangles with 
exactly three boundary lattice points and one interior lattice point, see \cite[Th 1.4]{SoSo1}.

 We rely on the Whitney and Blanco--Santos result, as well as
previous classification results due to Howe, White, and Kasprzyk \cite{Kasp, Scarf, White} to
classify pairs and triples of polytopes (under appropriate lattice equivalence) which can appear in a maximal Minkowski decomposition. 
In particular, our classification implies that if a maximal Minkowski decomposition contains at least two
polytopes with 4 or more lattice points then each such polytope is equivalent to one in Table~\ref{table}. See also \rf{zoo}
for a visualization.
As a consequence, if a maximal Minkowski decomposition contains more than one  
3-dimensional polytope then the volume of each summand is bounded. 
(If a maximal Minkowski decomposition contains only one 3-dimensional polytope
then its volume can be arbitrarily large as seen in \rex{two}.) Another implication of our results is that in any maximal Minkowski decomposition there could be at most one lattice polytope with 6 or more lattice points, in which case all the other summands are segments. This classification constitutes a substantial combinatorial part of our work and can be of independent interest to those working in discrete geometry and lattice polytopes theory.

\begin{figure}
\begin{center}
\includegraphics[scale=.35]{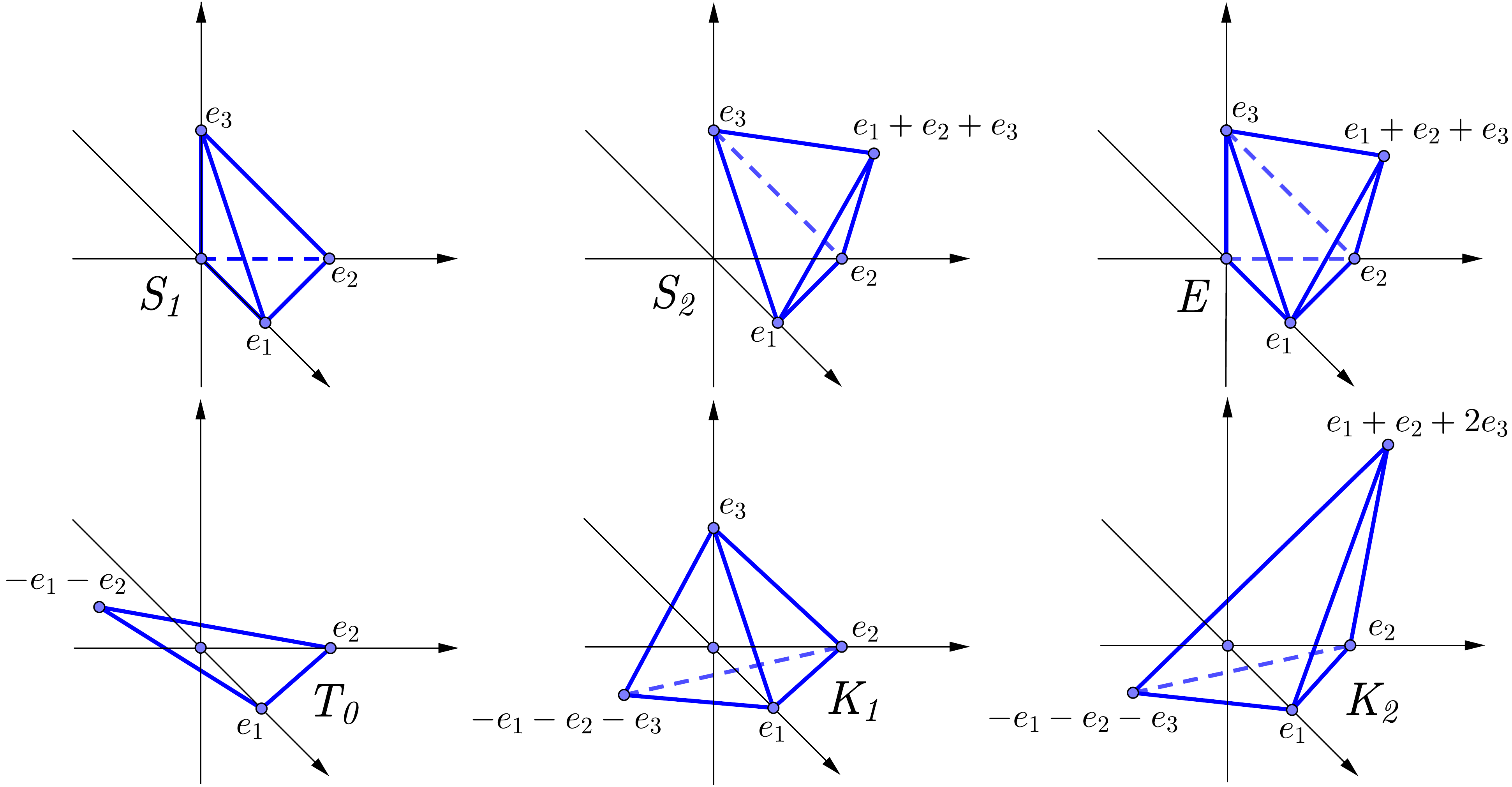}
\end{center}
\caption{Possible summands with 4 or more lattice points in a maximal decomposition}
\label{F:zoo}
\end{figure}

In our second step we estimate $N_f$ in the case when $L(P_f)=1$. Here we rely on Whitney's result \cite[Th 4.30]{Josh} which implies that
for  large enough characteristic of $\F_q$, one has $N_f\leq (q-1)^2+(\Vol_3(P_f)-2)q+2$ when $P_f$ belongs to the 108 classes mentioned above. To prove his result he uses the Grothendieck--Lefschetz trace formula and cohomology computation for the hypersurface $X_f$ in a toric variety corresponding to $P_f$. We use  elementary arguments, similar to the ones in \rex{two}, together with the BKK bound (\rs{mix-vol}), to
show that the same bound holds for the infinite families as well, see \rt{VolumeBound}. This, combined with the combinatorial results about
maximal Minkowski decompositions, provides a bound for $N_f$ when $f$ factors into the largest number of  irreducible factors.
\rt{max-a} gives concrete bounds depending on the number of factors with four or more monomials whereas \rc{max} gives a universal bound.

\subsection{Application to toric varieties and toric codes}\label{S:motivation}  The polynomial spaces $\cL_P$ we defined above appear naturally in toric geometry. Consider a complete $n$-dimensional 
toric variety $X_\Sig$  over $\bar\F_q$, the algebraic closure of $\F_q$. It is defined by a rational 
polyhedral fan $\Sig$ whose 1-dimensional cones $\rho_1\dots, \rho_k$ correspond to the torus invariant prime divisors $D_1,\dots, D_k$ on $X$.
Let $D=\sum_{i=1}^ka_iD_i$ be a torus invariant Cartier divisor on $X_\Sig$ and $\cO(D)$ the corresponding line bundle. It defines a rational polytope $$P_D=\{x\in\R^n : \langle x,v_i\rangle\geq -a_i, 1\leq i\leq k\},$$ where $v_i\in\Z^n$ are the primitive generators of the $\rho_i$.
As shown in \cite[Prop 4.3.3, 4.3.8]{CLSch}, the space of global sections $\Gamma(X_\Sig,\cO(D))$
 is spanned (as a $\bar\F_q$-linear space) by the characters corresponding to the lattice points in $P_D$,
 $$\Gamma(X_\Sig,\cO(D))=\sum_{a\in P_D\cap\Z^n}\bar\F_q\chi^a.$$ 
Then the space of sections that are invariant under the Frobenius automorphism $\Phi\in\Gal(\bar\F_q/\F_q)$ is the $\F_q$-span
of the corresponding characters,
 $$\Gamma^\Phi(X_\Sig,\cO(D))=\sum_{a\in P_D\cap\Z^n}\F_q\chi^a.$$ 
 In the case when $\cO(D)$ is generated by global sections, $P_D$ is a lattice polytope, see \cite[Th 6.1.7]{CLSch}, so 
 $\Gamma^\Phi(X_\Sig,\cO(D))$ is identified with the space $\cL_{P_D}$ we defined in \re{L-def}. Thus, 
 Problems~\ref{Pr:all} and~\ref{Pr:max}  above are related to factorizations of $\F_q$-sections of globally generated line bundles on toric 3-folds and 
 their $\F_q$-zeros.

 Algebraic geometry codes (a.k.a. Goppa codes) are linear error-correcting codes constructed by evaluating
 $\F_q$-sections of a divisor on an algebraic curve at a fixed set of $\F_q$-points of the curve.
It was a major breakthrough in information theory when Tsfasman, Vl\u{a}du\c{t}, and Zink used this construction to improve a
previously known bound for parameters of linear codes (the asymptotic Gilbert-Varshamov bound), \cite{TVZ}. Since then algebraic geometry codes
received much attention and grew into a subfield of both applied algebraic geometry and information theory. Inspired by this construction,
Hansen \cite{Ha1} introduced  {\it toric codes} obtained by evaluating $\F_q$-sections of a divisor on a toric surface  at a fixed set of $\F_q$-points of the surface. This prompted a series of papers on toric codes defined for toric surfaces and higher dimensional toric varieties, see for example 
\cite{CanHibi, Ha2, Jo,Kimball, LSc, LSch, Ru, SoSo1, SoSo2, UmVe}. In particular, the toric code construction for $n=2$ produced about a dozen of new champion codes, see
\cite{BrKasp13, BrKasp-seven,Lit11}, that is codes with largest known minimum distance with given length and dimension, as in Grassl's table of best known codes \cite{Gra}.

Because of the connection described above, toric codes can be defined in purely combinatorial terms without the use of toric geometry as follows. Fix a lattice polytope $P\subset\R^n$ and let $\cL_P$ be the corresponding space
of Laurent polynomials as in \re{L-def}. 
Choose an order of the elements of $(\F_q^*)^n=\{p_1,\dots, p_N\}$, where $N=(q-1)^n$. This defines an evaluation map
$$ev:\cL_P\to \F_q^N,\quad ev: f\mapsto \left(f(p_1),\dots,f(p_N)\right).$$
A {\it toric code} $\cC_{P}$ is the image of the evaluation map, $\cC_{P}=ev(\cL_P)$. 
It is known that if the lattice points of $P$ are distinct in $(\Z/(q-1)\Z)^n$ then the evaluation map is injective, and hence, $\dim\cC_P=\dim{\cL_P}=|P\cap\Z^n|$, \cite{Ru}. In particular, this is true if 
 $P\subseteq [0,q-2]^n$. Recall that the {weight} $w(v)$ of a vector $v\in \F_q^N$
is the number of non-zero entries in $v$. The {\it minimum weight} of a code $\cC\subset \F_q^N$ 
(which for linear codes is the same as the minimum Hamming distance) is defined by $$d=\min\{ w(v) : 0\neq v\in  \cC\}.$$ 
This is an important parameter responsible for the reliability of the code: the larger $d$ is the more errors the code can correct. Thus, it is important to be able to efficiently compute $d$ or at least provide good lower bounds for $d$. In general, when $q$ is large, finding the exact value of $d$  becomes a heavy computational problem. It immediately follows from the above definitions that for a toric code $\cC_P$ we have 
$$d=(q-1)^n-\max\{N_f : 0\neq f\in\cL_P \}.$$ 
Therefore, \rpr{all} is equivalent to finding a (sharp) lower bound for the minimum weight of a toric code, in terms of geometric invariants of $P$.
For $n=3$, the result in \rt{all} provides the following.

\begin{Cor}\label{C:toric-app}
Let $P\subseteq [0,q-2]^3$ be a lattice polytope of Minkowski length $L$. Assume $q\geq\alpha(P)$ where $\alpha(P)$ as in \re{alpha}.
Then the minimum weight of the toric code $\cC_P$ satisfies
$$d\geq (q-1)^3-L\,(q-1)^2-2(q-1)(2\sqrt{q}-1).$$
\end{Cor}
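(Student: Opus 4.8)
The plan is to derive \rc{toric-app} directly from \rt{all} together with the dictionary between minimum weight and the maximum number of torus zeros. First I would recall the identity $d = (q-1)^3 - \max\{N_f : 0\neq f\in\cL_P\}$, which is immediate from the definition of the toric code $\cC_P$ as the image of the evaluation map on $(\F_q^*)^3$: a codeword $ev(f)$ has weight $(q-1)^3 - N_f$, so the minimum weight corresponds to the polynomial maximizing $N_f$. This reduces the corollary to an upper bound on $\max\{N_f : 0\neq f\in\cL_P\}$.

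Next I would invoke \rt{all}, whose hypothesis is exactly $q\geq\alpha(P)$ with $\alpha(P)$ as in \re{alpha}. That theorem asserts the bound $N_f\leq L\,(q-1)^2 + 2(q-1)(\floor{2\sqrt q}-1)$ for \emph{every} nonzero $f\in\cL_P$ (not just those with the maximal number of irreducible factors), since \rt{all} is the solution to \rpr{all} rather than \rpr{max}. Substituting this into the weight identity gives
$$d \geq (q-1)^3 - L\,(q-1)^2 - 2(q-1)(\floor{2\sqrt q}-1).$$
Finally, since $\floor{2\sqrt q}-1 \leq 2\sqrt q - 1$, we may weaken the bound to the stated form
$$d \geq (q-1)^3 - L\,(q-1)^2 - 2(q-1)(2\sqrt q - 1),$$
which is the claimed inequality.

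There is essentially no obstacle here: the corollary is a formal consequence of \rt{all} and the standard combinatorial reformulation of minimum weight. The only minor point to verify is that the threshold $\alpha(P)$ in the hypothesis of \rc{toric-app} matches the one in \rt{all}, and that the floor can be harmlessly replaced by $2\sqrt q$; both are trivial. The substantive content—the combinatorial classification of maximal Minkowski decompositions and the volume bound of \rt{VolumeBound}—has already been absorbed into \rt{all}, so the proof of the corollary is just the translation step.
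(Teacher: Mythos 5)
Your proposal is correct and matches the paper's intended derivation: the corollary is a direct translation of \rt{all} via the standard identity $d=(q-1)^3-\max\{N_f: 0\neq f\in\cL_P\}$, and the paper offers no separate proof precisely because this translation is immediate. One small inaccuracy: \rt{all} as stated already uses $2\sqrt q$ rather than $\floor{2\sqrt q}$, so the final weakening step you describe is unnecessary, but of course harmless.
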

 
 This result is a universal lower bound on the minimum distance of any $3$-fold toric code,
 which does not require extra combinatorial information beyond computing $\Vol_3(P)$ and $L(P)$. 
 Asymptotically, $(q-1)^3-L\,(q-1)^2+O(q^{3/2})$ is what one would expect when comparing this bound 
 to the known explicit answers for the minimum distance of a toric code.
In fact, $d=(q-1)^n-L\,(q-1)^{n-1}+O(q^{n-2})$ for all toric codes considered in
\cite{LSch, Ru, SoSo2}. (Although the Minkowski length $L$ is not discussed in these papers, one can easily
compute it by presenting a Minkowski decomposition of length $L$ in $P$ and embedding $P$ in $L\D^n$.)
 
 The downside of the result in \rc{toric-app} for practical purposes is
 that it holds for rather large size and characteristic of $\F_q$. To get a better estimate on the minimum distance for a particular $P$
 one would first compute all maximal decompositions in $P$ and apply one of the bounds in \rt{max-a}, depending on
 how many summands in a maximal decomposition have 4 or more lattice points. The condition on the characteristic may also be relaxed, see
 \rr{char41}. This method already appears in \cite{Josh}. Together with our results on
 the combinatorics of maximal decompositions this may lead to a practical algorithm for the minimum distance estimation of $3$-fold toric codes.
 This is beyond the scope of this paper. 
 
 On the upside, when $P$ has lattice width one, we prove a better bound with a much smaller threshold 
 $q\geq \beta(P)$ (\rt{width-one}) and give an example when the bound is sharp  (\rex{width-one}).  We state the corresponding application to toric codes below.
 
 \begin{Cor}\label{C:toric-app-2}
Let $P\subseteq\R^3$ be a lattice polytope of lattice width one and Minkowski length~$L$. Assume $q\geq\beta(P)$ where $\beta(P)$ as in \re{beta}.
Then the minimum weight of the toric code $\cC_P$ satisfies
$$d\geq (q-1)^3-L\,(q-1)^2-(q-1)(2\sqrt{q}-1).$$
\end{Cor}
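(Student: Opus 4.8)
The plan is to read off \rc{toric-app-2} from \rt{width-one} via the dictionary, recalled above, between the minimum weight of a toric code and the largest number of torus zeros of a defining polynomial: since $P\subseteq[0,q-2]^3$ makes $ev$ injective and $d=(q-1)^3-\max\{N_f:0\neq f\in\cL_P\}$, and since $\floor{2\sqrt{q}}-1\le 2\sqrt{q}-1$, the asserted bound on $d$ is equivalent to $N_f\le L(q-1)^2+(q-1)\bigl(\floor{2\sqrt{q}}-1\bigr)$ for every nonzero $f\in\cL_P$ once $q\ge\beta(P)$, which is the content of \rt{width-one}. To prove \rt{width-one} I would first dispose of $f=f_1\cdots f_k$ with $k<L=L(P)$ by applying \re{proj} to the $X_{f_i}$ and their pairwise intersections (the degrees, and hence the implied constants, being bounded in terms of $\Vol_3(P)$): this gives $N_f\le kq^2+O(q^{3/2})$, which stays below $L(q-1)^2+(q-1)(\floor{2\sqrt{q}}-1)$ once $q$ exceeds a threshold depending only on $L(P)$ and $\Vol_3(P)$, a constraint folded into $\beta(P)$. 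It then remains to bound $N_f$ for $f=f_1\cdots f_L$ with all $f_i$ absolutely irreducible and $(P_{f_1},\dots,P_{f_L})$ a maximal Minkowski decomposition inside $P$.

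Now the width-one hypothesis is used. Fix a primitive functional $\ell$ with $\max_P\ell-\min_P\ell=1$. Since $\ell$ is additive under Minkowski sums, $\sum_i(\max_{P_{f_i}}\ell-\min_{P_{f_i}}\ell)\le 1$, so at most one summand has $\ell$-width one --- and, because a $3$-dimensional polytope has positive width in every direction, this is the unique $3$-dimensional summand whenever one is present --- while every other $P_{f_i}$ lies in a lattice hyperplane $\ell=\mathrm{const}$ and is therefore a planar Minkowski-length-one polytope: a unimodular segment, a unimodular triangle, or a triangle with one interior lattice point (the three planar classes of \cite{SoSo1}). Combining this with the classification of maximal decompositions developed in the paper --- in particular that the Fano polytopes $K_1,K_2$ of Table~\ref{table} have lattice width two and so cannot appear as summands here, that a maximal decomposition has at most one summand with six or more lattice points (the rest then being segments), and the two-dimensional form of the same circle of results --- I would establish the key combinatorial fact: a maximal decomposition inside a width-one $P$ has at most one \emph{heavy} summand, a summand being heavy if its factor can have more than $(q-1)^2$ torus zeros, and a heavy summand is either a planar polytope with an interior lattice point or a $3$-dimensional polytope.

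Granting this fact, the estimate follows quickly (if there is no heavy summand, $N_f\le L(q-1)^2$ already). A non-heavy summand $P_{f_i}$ has at most three lattice points, so after a monomial change of variables (which preserves $N_{f_i}$) and multiplication by a unit $f_i$ is either a binomial $1-cx^v$ with $v$ primitive --- exactly $(q-1)^2$ torus zeros, since $x\mapsto x^v$ is a surjection onto $\F_q^*$ with fibers of size $(q-1)^2$ --- or a linear trinomial in two of the variables, with at most $(q-1)^2$; either way $N_{f_i}\le(q-1)^2$. For the single heavy summand $P_{f_1}$: if it is $3$-dimensional then \rt{VolumeBound} applies (with no hypothesis on $\cchar(\F_q)$ needed, as the width-one $3$-folds form the infinite families treated by elementary means) and gives $N_{f_1}\le(q-1)^2+(\Vol_3(P_{f_1})-2)q+2\le(q-1)^2+(\Vol_3(P)-2)q+2$, which is $\le(q-1)^2+(q-1)(\floor{2\sqrt{q}}-1)$ once $q$ is at least a fixed multiple of $\Vol_3(P)^2$; if instead $P_{f_1}$ is planar with an interior lattice point then a monomial change turns $f_1$ into a unit times a bivariate polynomial defining a curve of arithmetic genus one, and \cite[Prop~2.3]{SoSo1} (Hasse--Weil in genus one) gives $N_{f_1}\le(q-1)\bigl((q-1)+\floor{2\sqrt{q}}-1\bigr)$. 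The union bound now yields $N_f\le(L-1)(q-1)^2+N_{f_1}\le L(q-1)^2+(q-1)(\floor{2\sqrt{q}}-1)$, and collecting the lower bounds on $q$ encountered along the way produces the explicit $\beta(P)$ of \re{beta}; the example of \rex{width-one} shows the term $(q-1)(\floor{2\sqrt{q}}-1)$ is essentially optimal.

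The main obstacle is the key combinatorial fact itself --- that a width-one maximal decomposition has at most one heavy summand --- and this is precisely where the width-one hypothesis earns the improvement from the universal constant $2(q-1)(\floor{2\sqrt{q}}-1)$ of \rt{intro-main} to $(q-1)(\floor{2\sqrt{q}}-1)$. Concretely, one must rule out, inside a single width-one polytope, the simultaneous presence of two interior-point (positive-genus) summands, and of a large-volume $3$-dimensional summand together with an interior-point summand. I would do this via the Cayley-type structure forced by lattice width one: write $P$ as the join of its two slices $Q_0,Q_1$ in the hyperplanes $\ell=0$ and $\ell=1$, note that $Q_0$ and $Q_1$ must each contain a translate of the Minkowski sum of all the summands other than the width-spanning one, and show that any configuration containing two heavy summands admits a strictly longer Minkowski decomposition and so cannot be maximal; pushing this through every case permitted by the Whitney--Blanco--Santos classification of Minkowski-length-one $3$-folds is the technical core of the argument.
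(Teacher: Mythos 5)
Your opening reduction is exactly the paper's: \rc{toric-app-2} is stated in the paper as a direct consequence of \rt{width-one}, via $d=(q-1)^3-N_P$ and $\floor{2\sqrt q}-1\le 2\sqrt q-1$, so that part is fine and matches. The rest of your proposal, though, is really a proposed proof of \rt{width-one}, and there it diverges substantially and has a genuine gap.

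The gap is in how you handle $f$ with fewer than $L$ absolutely irreducible factors. You invoke the Lang--Weil estimate \re{proj} (or \re{affine}) to get $N_f\le kq^2+O(q^{3/2})$ and ``fold'' the resulting threshold into $\beta(P)$. But $\beta(P)$ in \re{beta} is an explicit formula depending only on $\Vol_2(P_0),\Vol_2(P_1),L(P_i),L$, and $V(P_0,P_1)$, with no room for the large implied constants of Lang--Weil; using \re{affine} here produces a threshold of the magnitude of the paper's $\alpha(P)$ in \re{alpha} (note the $12(d+3)^4$ term), which is far larger than $\beta(P)$. So your route proves the bound only for $q$ much larger than $\beta(P)$, which is a strictly weaker statement than \rc{toric-app-2}. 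This is precisely the obstruction the width-one theorem is designed to circumvent, and your argument reintroduces it.

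The paper's actual proof of \rt{width-one} avoids Lang--Weil entirely. After reducing to $f=f_0+f_1z$ with $f_0,f_1\in\F_q[x^{\pm1},y^{\pm1}]$ nonzero, it extracts a single absolutely irreducible factor $g=g_0+g_1z$ involving $z$, writes $f=gh$ with $h\in\F_q[x^{\pm1},y^{\pm1}]$, and then (i) bounds $N_h$ by applying the \emph{bivariate} results of \cite[Th 2.6, Lem 2.7]{SoSo1} to $P_h$, using that $P_h$ embeds in $P_0$ and $P_1$ so its area is controlled, and (ii) bounds $N_g$ by \rp{Infinite} together with monotonicity of mixed volume, giving $N_g\le(q-1)^2+V(P_0,P_1)q$. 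This two-step reduction is what keeps $\beta(P)$ small and explicit; the $L(P_h)\le L-1$ dichotomy plays the role of your $k<L$ vs.\ $k=L$ split, but with effective bounds throughout. Your ``key combinatorial fact'' (at most one summand with $4$ or more lattice points in a width-one maximal decomposition) is true and is the paper's \rc{width-one}, but it is proved there as an easy consequence of the computer-verified \rt{T_0+four}, not by the Cayley/join argument you sketch; that part of your proposal is speculative and would need to be carried out in full, and even if it were, it only handles the $k=L$ case and does not repair the Lang--Weil threshold problem above.

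A minor point: your notion of ``heavy'' is slightly off. A unit $3$-simplex is $3$-dimensional but its generic polynomial has $(q-1)^2-q+2<(q-1)^2$ torus zeros, so it is not heavy in the sense you defined, yet it has $4$ lattice points and so contradicts your claim that a non-heavy summand has at most $3$ lattice points. The clean invariant to use is simply ``$4$ or more lattice points,'' i.e.\ \rc{width-one} as stated.
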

 
 Finally, in \rs{examples} we give examples of toric codes defined by polytopes of lattice width greater than one and compare their parameters with
 theoretical bounds. The minimum distance of three of the presented toric codes exceeds the Gilbert-Varshamov bound for linear codes.
 
 

\subsection{Acknowledgments} We are grateful to anonymous referees for their valuable comments which led to significant improvements of the paper.
Work of Meyer and Soprunova was partially supported by NSF Grant DMS-1156798. 
Several of our classification results on maximal Minkowski decomposition require a computer-assisted search. We use
combinatorial arguments to first reduce the question we are dealing with to a finite number of cases and then use the {\sc Magma} algebra system \cite{Magma} to sort out these cases. We wrote a {\sc Magma}  package to assist us with these computations. 
The code is available at \url{https://github.com/isoprou/minkowski-length}.

\section{Preliminaries}

\subsection{Lattice polytopes}\label{S:prelim} We begin with standard notions in convex geometry and lattice polytope theory.
A  polytope is the convex hull of a finite number of points in a Euclidean space. We use $[v_1,\dots,v_k]$  to denote the convex hull 
of $v_1,\dots, v_k\in \R^n$. We will write vectors in $\R^n$ either in the standard basis $\{e_1,\dots, e_n\}$ or as columns of a matrix.
The lattice polytope $\D^n=[0,e_1,\dots,e_n]$ is called the standard $n$-simplex.
The dimension $\dim P$ of a polytope $P\subset\R^n$ is the dimension of the smallest affine subspace containing the polytope. A hyperplane $H\subset\R^n$ is called a supporting hyperplane for a polytope $P$ if $P\cap H\neq\varnothing$ and $P$ is contained in one of the closed half-spaces defined by $H$. In this case the intersection $P\cap H$ is called a {\it face} of $P$. Faces of dimension 0 are called {\it vertices} and faces of dimension $\dim P-1$ are called {\it facets} of $P$.

We say that a convex polytope $P\subset\R^n$ is a {\it lattice polytope} if all vertices of $P$ are {\it lattice points}, that is, they belong to the integer lattice 
$\Z^n$. We say that a lattice polytope $P$ is {\it empty} if its only lattice points are its vertices. We say that it is {\it clean} if its faces are empty  polytopes.  A lattice segment (i.e. 1-dimensional lattice polytope) which is empty is called {\it primitive}. Equivalently, a lattice segment $[v_1,v_2]$ is primitive if and only if its direction vector $v_2-v_1$ is primitive, that is
the coordinates of  $v_2-v_1$  do not have a common divisor larger than one. 
Recall that $\Vol_n(P)$  denotes the {\it normalized $n$-dimensional volume} of a polytope which is the usual Euclidean volume multiplied by $n!$.
When $P$ is a lattice polytope, $\Vol_n(P)$ is a non-negative integer. 

Let $P\subset\R^n$ be a lattice polytope and $v\in\Z^n$ a primitive vector. The non-negative integer 
$${\rm w}_v(P)=\max_{u\in P}\langle u,v\rangle-\min_{u\in P}\langle u,v\rangle$$
is called the {\it lattice width} of $P$ in the direction of $v$. The {\it lattice width} ${\rm w}(P)$ is smallest value of ${\rm w}_v(P)$ over all primitive $v\in\Z^n$.

Let $\GL(n,\Z)$ be the group of {\it unimodular matrices}, that is, $n\times n$ integer matrices with determinant $\pm 1$. 
The automorphism group of $\Z^n$, which we denote by $\AGL(n,\Z)$, consists of compositions of a multiplication by a unimodular matrix and a translation by a lattice vector. The elements of $\AGL(n,\Z)$ are called {\it affine unimodular maps}. The group  $\AGL(n,\Z)$
acts on the set of all lattice polytopes in $\R^n$. We say that two lattice polytopes in $\R^n$ are {\it  equivalent} 
if there is $\varphi\in\AGL(n,\Z)$ that maps one to the other. A lattice polytope equivalent to $\D^2$ is called a {\it unit triangle} and a lattice polytope equivalent to $\D^3$ is called a {\it unit 3-simplex}.
Clearly, the volume, the width, and the number of (interior) lattice points are $\AGL(n,\Z)$-invariants.

We say that two $k$-tuples $(P_1,\dots,P_k)$ and $(P'_1,\dots,P'_k)$ of lattice polytopes  in $\R^n$ are {\it  equivalent} if there exists an affine unimodular map $\varphi\in{\rm AGL}(n,\Z)$ and lattice vectors $v_i\in\Z^n$ such that $P'_i=\varphi(P_i)+v_i$ for $i=1,\dots,k$. Note that the map $\varphi$ is the same for all~$i$, but the lattice translations are individual.

\subsection{Monomial Changes of Variables}\label{S:monomial-change} Let $f\in\F_q[x_1^{\pm 1},\dots,x_n^{\pm 1}]$ be a Laurent polynomial over $\F_q$ and let $P=P_f$ be its Newton polytope. A translation $P+v$ by a lattice vector $v\in\Z^n$ corresponds to multiplying $f$ by the monomial
$x^v$ and does not change the set of zeros of $f$ in the algebraic torus $(\F_q^*)^n$. Let $U\in\GL(n,\Z)$ be a unimodular matrix. Consider
the monomial change of variables $x_i=y^{u_i}$, where $u_i$ is the $i$-th column of $U$, for $1\leq i\leq n$, and let 
$g(y_1,\dots, y_n)=f(y^{u_1},\dots, y^{u_n})$ be the resulting Laurent polynomial in $\F_q[y_1^{\pm 1},\dots,y_n^{\pm 1}]$. Note that
$x^a=y^{Ua}$ for any $a\in\Z^n$, which implies that the Newton polytope of $g$ is the image of $P$ under $U$. Clearly, the  map
$\phi:(\F_q^*)^n\to(\F_q^*)^n$ defined by  $\phi(t)=(t^{u_1},\dots, t^{u_n})$ is invertible and defines an automorphism of $(\F_q^*)^n$. In particular, 
the image of the set of zeros of $f$ under $\phi$ is the set of zeros of $g$. We have, thus, shown that  $N_f$ is invariant under the $\AGL(n,\Z)$-action on the Newton polytope of $f$.

\subsection{Minkowski length}\label{S:mink-length} 
Recall that the {\ Minkowski sum} $P+Q$ of two  polytopes $P, Q$ in $\R^n$ is the set of all vector sums of their points:
$$P+Q=\{p+q : p\in P, q\in Q\}\subset\R^n.
$$
The Minkowski sum of  lattice polytopes is again a  lattice polytope.

The full Minkowski length $L(P)$ of a lattice polytope $P$ was first defined in~\cite{SoSo1}. In subsequent papers studying this invariant~\cite{BGSW, SoSo3} it was referred to as Minkowski length, and we  use this shorter name in the current paper as well.

\begin{Def} Let $P\subset\R^n$ be a lattice polytope. Then the {\it Minkowski length} $L=L(P)$ is the largest number $L$ such that $P$ contains a Minkowski sum $P_1+\cdots+P_L$ of $L$ lattice polytopes $P_i$, each of positive dimension.
Every such Minkowski sum $P_1+\cdots+P_L$ is then called a {\it maximal decomposition} in $P$.
\end{Def}

Here are a few basic properties of the Minkowski length which either follow directly by definition or are contained in \cite{SoSo3}:
\begin{enumerate}
\item $L(P)$ is an $\AGL(n,\Z)$-invariant,
\item $L(P)$ is superadditive: $L(P+Q)\geq L(P)+L(Q)$, 
\item $L(d\D^n)=d$, where $d\D^n$ is the $d$-dilate of the standard $n$-simplex,
\item $L([0,d]^n)=nd$, where $[0,d]^n$  is the $n$-dimensional coordinate cube of side length $d$,
\item if $P_1+\cdots+P_L\subseteq P$ is a maximal decomposition then $L(P_{i_1}+\dots +P_{i_k})=k$ for
any non-empty $\{i_1,\dots, i_k\}\subset\{1,\dots, L\}$.
\end{enumerate}
Also, there is a simple upper bound on the number of lattice points
in $P$ in terms of $L=L(P)$:
\begin{equation}\label{e:eight}
|P\cap \Z^n|\leq (L+1)^n.
\end{equation}
Indeed, if $P$ has more than $(L+1)^n$ lattice points then two of them coincide modulo $(\Z/(L+1)\Z)^n$, which means that $P$
contains a lattice segment of lattice length $L+1$. In particular, lattice polytopes with $L(P)=1$ have a most $2^n$ lattice points and this bound 
is sharp, see \cite[Th 2.5]{So}. 
A polynomial time algorithm for computing $L(P)$ was provided for $P\subset\R^2$  in~\cite{SoSo1}  and  for $P\subset\R^3$ in~\cite{BGSW}.

\begin{Prop} Let $P\subset\R^n$ be a lattice polytope and $L(P)$ its Minkowski length. Then $L(P)$ is the maximal number of absolutely irreducible factors of polynomials $f\in\cL_P$.
\end{Prop}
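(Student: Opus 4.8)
The plan is to prove the two inequalities separately. The only two ingredients needed are the relation $P_{fg}=P_f+P_g$, which holds over any field and in particular over $\bar\F_q$ (in a domain the leading part of a product in any linear direction is the product of the leading parts, so there is no cancellation), and the fact that the units of the Laurent polynomial ring over a field are exactly the monomials $cx^a$; consequently a non-unit $g$ has $|\cA(g)|\geq 2$ and hence $\dim P_g\geq 1$, while conversely $\dim P_g=0$ forces $g$ to be a monomial.

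For the upper bound, suppose $f\in\cL_P$ and write $f=g_1\cdots g_k$ as a product of absolutely irreducible Laurent polynomials in $\bar\F_q[x_1^{\pm1},\dots,x_n^{\pm1}]$, each of which is a non-unit. Then $P_f=P_{g_1}+\cdots+P_{g_k}$ is a Minkowski sum of $k$ lattice polytopes of positive dimension contained in $P$, so by the definition of the Minkowski length $k\leq L(P)$. For the lower bound I would start from a maximal decomposition $P_1+\cdots+P_L\subseteq P$ with $L=L(P)$. Each $P_i$ has positive dimension, hence contains two distinct lattice points; taking $w_i\in\Z^n$ to be the primitive vector in the direction of their difference, $P_i$ contains a primitive segment $[u_i,u_i+w_i]$. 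Put $f_i=x^{u_i}-x^{u_i+w_i}$ and $f=f_1\cdots f_L$. Since $P_{f_i}=[u_i,u_i+w_i]\subseteq P_i$ and Minkowski addition is monotone under inclusion, $P_f=\sum_i P_{f_i}\subseteq\sum_i P_i\subseteq P$, so $f\in\cL_P$. Each $f_i$ equals the monomial $x^{u_i}$ times $1-x^{w_i}$; since $w_i$ is primitive it extends to a $\Z$-basis of $\Z^n$, and the corresponding unimodular monomial substitution (a ring automorphism of the Laurent polynomial ring over any field, see \rs{monomial-change}) sends $1-x^{w_i}$ to the linear polynomial $1-y_1$, which is absolutely irreducible. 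Hence each $f_i$ is absolutely irreducible, $f$ has $L$ absolutely irreducible factors, and together with the upper bound this proves the claim.

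I do not expect a serious obstacle here; the step requiring the most care is the lower bound, specifically verifying that the binomials $f_i$ built from primitive segments are genuinely absolutely irreducible (this is where primitivity of $w_i$ and the monomial-change-of-variables machinery of \rs{monomial-change} are essential), and making sure the factorization used in the upper bound is taken over $\bar\F_q$ so that "absolutely irreducible" is treated correctly and not confused with $\F_q$-irreducibility.
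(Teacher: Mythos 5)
Your proposal is correct and follows the same overall two-inequality structure as the paper: the upper bound is identical, using $P_f=\sum P_{g_i}$ and the fact that non-units have positive-dimensional Newton polytope. The one place you diverge is the lower bound, and it is a small but clean variation worth noting. The paper takes \emph{any} Laurent polynomial $f_i$ with $P_{f_i}=P_i$ and deduces absolute irreducibility indirectly: if $f_i$ factored into non-units, $P_i$ would split as a Minkowski sum of positive-dimensional lattice polytopes, giving $L+1$ summands inside $P$ and contradicting $L=L(P)$. You instead replace each $P_i$ by a primitive segment $[u_i,u_i+w_i]\subseteq P_i$ and take the explicit binomial $f_i=x^{u_i}-x^{u_i+w_i}$, proving absolute irreducibility directly via the unimodular monomial substitution that carries $1-x^{w_i}$ to $1-y_1$. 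Both are valid; the paper's argument is slicker because irreducibility comes for free from maximality, while yours is more hands-on and produces an explicit witness polynomial, and has the small advantage that it does not lean on maximality to establish irreducibility of the factors. Neither choice affects correctness, and your treatment of the $\bar\F_q$ versus $\F_q$ distinction and of the monomial change of variables is accurate.
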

\begin{pf} Let $f\in\cL_P$ be a Laurent polynomial with the maximal number of absolutely irreducible factors, $f=f_1\cdots f_L$. Then 
$P_f=P_{f_1}+\dots+P_{f_L}$, $P_f$ is contained in $P$, and $\dim P_{f_i}>0$ since $f_i$ are not units. Therefore, $L\leq L(P)$. 

Conversely, let $P_1+\dots+P_{L(P)}$ be a maximal decomposition in $P$. Choose any Laurent polynomial $f_i\in\F_q[x_1^{\pm 1},\dots,x_n^{\pm 1}]$ with $P_{f_i}=P_i$ for $1\leq i\leq L(P)$. Note that each $f_i$ is absolutely irreducible, otherwise $P_{f_i}$ would decompose into a sum of lattice polytopes of positive dimension, which contradicts the maximality of the decomposition  $P_1+\dots+P_{L(P)}$. Therefore, $L(P)\leq L$.
\end{pf}

\subsection{Mixed volume}\label{S:mix-vol}
The mixed volume $V(P_1,\dots, P_n)$
is the unique multilinear (with respect to Minkowski addition) function of $n$-tuples of convex polytopes (more generally compact convex sets) in $\R^n$ which coincides with the volume on the diagonal,
$$V(P,\dots, P)=\Vol_n(P).$$
One of the fundamental results in the theory of Newton polytopes is the Bernstein-Khovanskii-Kushnirenko theorem (a.k.a. the BKK bound) which relates the intersection number of generic hypersurfaces in the algebraic torus and the mixed volume of their Newton polytopes, see \cite{Be, Kho, Kush}.
Basic properties of the mixed volume include invariance with respect to independent translations of the $K_i$
and simultaneous unimodular transformations of the $K_i$, as well as monotonicity with respect to inclusion in each of the $K_i$, see
\cite[Sec 5]{Sch}.
The following  property of the mixed volume  goes back to the work of Minkowski \cite{Min}.
\begin{Prop}\label{P:Mink} 
Let $P_1,\dots, P_n\subset\R^n$ be convex polytopes. Then $V(P_1,\dots, P_n)=0$ if and only if there exists 
non-empty $\{i_1,\dots,i_k\}\subseteq\{1,\dots, n\}$ such that $\dim(P_{i_1}+\dots+P_{i_k})<k$.
\end{Prop}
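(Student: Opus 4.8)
The plan is to reduce the statement to a purely linear-algebraic condition on the linear hulls of the $P_i$ and then treat the two implications separately. Write $U_i\subseteq\R^n$ for the linear subspace parallel to the affine hull of $P_i$; equivalently $U_i=\spn(P_i-p)$ for any $p\in P_i$, so that $\dim U_i=\dim P_i$ and $P_i$ is contained in a translate of $U_i$. Since the affine hull of a Minkowski sum of polytopes is the Minkowski sum of their affine hulls, and the dimension of an affine sum equals the dimension of the sum of the corresponding direction spaces, one has $\dim(P_{i_1}+\dots+P_{i_k})=\dim(U_{i_1}+\dots+U_{i_k})$ for every nonempty index set. Hence the condition ``$\dim(P_{i_1}+\dots+P_{i_k})<k$ for some nonempty $\{i_1,\dots,i_k\}$'' is precisely the failure of the Hall--Rado condition for the family $(U_1,\dots,U_n)$ relative to the matroid of linear independence in $\R^n$.

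I would first prove the ``only if'' direction in contrapositive form: if $\dim(P_{i_1}+\dots+P_{i_k})<k$ for some $k$-subset, which we may take to be $\{1,\dots,k\}$, then $V(P_1,\dots,P_n)=0$. Put $L=U_1+\dots+U_k$, a linear subspace of dimension $m\le k-1$, and $M=L^\perp$. Translating each $P_i$ with $i\le k$ individually (which changes neither $V$, by translation invariance, nor the volume of any Minkowski combination) we may assume $P_1,\dots,P_k\subseteq L$, so that $\lambda_1P_1+\dots+\lambda_kP_k\subseteq L$ for all $\lambda_i\ge 0$. Now $\Vol_n(\lambda_1P_1+\dots+\lambda_nP_n)$ is a polynomial in $\lambda_1,\dots,\lambda_n$ whose $\lambda_1\cdots\lambda_n$-coefficient is $n!\,V(P_1,\dots,P_n)$; fixing $\lambda_{k+1},\dots,\lambda_n$ and slicing by translates of $L$ (Fubini), this volume becomes an integral over the compact set $\pi_M\!\left(\sum_{j>k}\lambda_jP_j\right)\subseteq M$ of $m$-dimensional volumes of sets of the form $(\lambda_1P_1+\dots+\lambda_kP_k)+R_y$, where $R_y\subseteq L$ is (a translate into $L$ of) the slice of $\sum_{j>k}\lambda_jP_j$ over $y$, hence independent of $\lambda_1,\dots,\lambda_k$. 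Each such $m$-volume is a polynomial in $\lambda_1,\dots,\lambda_k$ of total degree at most $m$, and testing against the scaling $\lambda_i\mapsto t\lambda_i$ for $i\le k$ shows this bound survives integration over $y$. Since $m<k$, the monomial $\lambda_1\cdots\lambda_n$ cannot occur, so $V(P_1,\dots,P_n)=0$.

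For the ``if'' direction, suppose $\dim(U_{i_1}+\dots+U_{i_k})\ge k$ for every nonempty $\{i_1,\dots,i_k\}$. By Rado's theorem on independent transversals there exist $u_i\in U_i$, $1\le i\le n$, linearly independent in $\R^n$ (in particular each $u_i\ne0$, which forces $\dim P_i\ge 1$, consistent with the case $k=1$ of the hypothesis). For each $i$, since $u_i$ lies in the direction space of the affine hull of $P_i$ and $P_i$ has nonempty relative interior, $P_i$ contains a nondegenerate segment $S_i=[a_i,b_i]$ with $b_i-a_i$ a positive multiple of $u_i$; the vectors $b_i-a_i$ remain linearly independent. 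By monotonicity of the mixed volume with respect to inclusion, $V(P_1,\dots,P_n)\ge V(S_1,\dots,S_n)$, and since $\sum_i t_iS_i$ is a parallelepiped of Euclidean volume $t_1\cdots t_n\,|\det(b_1-a_1,\dots,b_n-a_n)|$, expanding gives $V(S_1,\dots,S_n)=|\det(b_1-a_1,\dots,b_n-a_n)|>0$. This completes the proof.

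The main obstacle is the ``only if'' direction: one must argue cleanly that Fubini slicing across $M$ expresses $\Vol_n$ as an integral of $m$-dimensional volumes, that each of these is polynomial of total degree $\le m<k$ in the variables $\lambda_1,\dots,\lambda_k$, and that integrating over the (compact) transverse projection does not raise this degree. The ``if'' direction is essentially bookkeeping once Rado's theorem and the two standard properties of mixed volume recalled in the excerpt — monotonicity under inclusion and multilinearity, giving its value on an $n$-tuple of segments — are invoked.
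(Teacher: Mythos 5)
The paper states this proposition without proof, citing it as a classical fact going back to Minkowski (with a pointer to Schneider's book for the surrounding theory of mixed volumes). Your argument is a correct, self-contained proof along the standard lines found in the literature. The reduction to the direction spaces $U_i$ and the reformulation as a Hall--Rado condition is right, since $\dim(P_{i_1}+\dots+P_{i_k})=\dim(U_{i_1}+\dots+U_{i_k})$. For the ``if'' direction you invoke Rado's theorem to extract linearly independent $u_i\in U_i$, replace each $P_i$ by a segment in that direction, and use monotonicity together with the parallelepiped formula $V(S_1,\dots,S_n)=|\det(b_1-a_1,\dots,b_n-a_n)|$; this is exactly the textbook route. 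For the ``only if'' direction, the Fubini slicing argument is sound: with $A=\lambda_1P_1+\dots+\lambda_kP_k\subseteq L$, one has $(A+Q)\cap\pi_M^{-1}(y)=A+Q_y$, so $\Vol_n(A+Q)=\int_{\pi_M(Q)}\Vol_m(A+Q_y)\,dy$, and since for fixed $y$ the integrand is (by Minkowski's theorem in $L\cong\R^m$) a polynomial of degree $\le m<k$ in $\lambda_1,\dots,\lambda_k$, and since the domain $\pi_M(Q)$ is independent of those parameters, the integral is again a polynomial of degree $\le m$, killing the $\lambda_1\cdots\lambda_k$ coefficient. The only cosmetic point worth tightening is the phrase about ``testing against the scaling $\lambda_i\mapsto t\lambda_i$'': what one actually needs, and what you already have, is simply that integrating a family of degree-$\le m$ polynomials over a fixed compact domain produces a degree-$\le m$ polynomial; the scaling remark is superfluous.
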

We also note that when $P_1,\dots, P_n$ are lattice polytopes  $V(P_1,\dots, P_n)$ is a non-negative integer. We will make use
of the following simple observations.

\begin{Lemma}\label{L:mix} Let $P_0, P_1\subset\R^2$ be convex polytopes and $P\subset\R^3$ be 
the convex hull of $(P_0\times\{0\})\cup (P_1\times\{1\})$. Then $\Vol_3(P)=\Vol_2(P_0)+V(P_0,P_1)+\Vol_2(P_1).$
\end{Lemma}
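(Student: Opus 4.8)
The plan is to slice $P$ by horizontal planes and integrate the resulting areas, using Cavalieri's principle in the normalized-volume normalization $\Vol_3 = 3!\cdot(\text{Euclidean vol})$. For $t\in[0,1]$, let $P(t)=P\cap(\R^2\times\{t\})$, viewed as a convex subset of $\R^2$. Since $P$ is the convex hull of $(P_0\times\{0\})\cup(P_1\times\{1\})$, every point of $P$ at height $t$ is a convex combination $(1-t)(p_0,0)+t(p_1,1)$ with $p_0\in P_0$, $p_1\in P_1$; hence $P(t)$ is exactly the Minkowski combination $(1-t)P_0 + tP_1$ (scaled copies summed). So the Euclidean volume of $P$ is $\int_0^1 \operatorname{area}\bigl((1-t)P_0+tP_1\bigr)\,dt$.

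Next I would expand the integrand using bilinearity of the $2$-dimensional mixed volume. Writing $A$ for the normalized area $\Vol_2$ (Euclidean area times $2!$), multilinearity and the diagonal property give
\[
\operatorname{area}\bigl((1-t)P_0+tP_1\bigr)=\tfrac12\Bigl((1-t)^2\Vol_2(P_0)+2(1-t)t\,V(P_0,P_1)+t^2\Vol_2(P_1)\Bigr).
\]
Now integrate term by term over $[0,1]$: $\int_0^1(1-t)^2\,dt=\int_0^1 t^2\,dt=\tfrac13$ and $\int_0^1 2(1-t)t\,dt=\tfrac13$. Therefore the Euclidean volume of $P$ equals $\tfrac12\cdot\tfrac13\bigl(\Vol_2(P_0)+V(P_0,P_1)+\Vol_2(P_1)\bigr)=\tfrac16\bigl(\Vol_2(P_0)+V(P_0,P_1)+\Vol_2(P_1)\bigr)$. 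Multiplying by $3!=6$ gives $\Vol_3(P)=\Vol_2(P_0)+V(P_0,P_1)+\Vol_2(P_1)$, as claimed.

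The only genuine point to be careful about is the identification $P(t)=(1-t)P_0+tP_1$: one inclusion is immediate from the description of $P$ as a convex hull of the two lifted copies, and the reverse inclusion follows because any convex combination of points of $P$ lying at height $t$ is again of that form (the height-$t$ slice of a convex set is convex, and its extreme points come from edges joining the bottom face to the top face). I expect this to be the main — though still routine — obstacle; everything after it is the elementary computation above. One can also sidestep the slicing entirely and instead argue directly from multilinearity: $P$ is a Cayley-type polytope, and $\Vol_3(P)$ can be computed by triangulating or by the known Cayley-trick identity relating the volume of $\operatorname{conv}\bigl((P_0\times\{0\})\cup(P_1\times\{1\})\bigr)$ to mixed volumes of $P_0,P_1$; but the Cavalieri computation is the most self-contained and is what I would write up.
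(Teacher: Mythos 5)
Your proof is correct and follows essentially the same route as the paper: express the Euclidean volume of $P$ as $\int_0^1 \operatorname{area}\bigl((1-t)P_0+tP_1\bigr)\,dt$ (Cavalieri), expand the integrand via bilinearity of the mixed volume, and integrate. You additionally spell out the slice identification $P(t)=(1-t)P_0+tP_1$ and the normalization bookkeeping, which the paper leaves implicit, but there is no substantive difference in method.
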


\begin{pf}
We  have
$$\Vol_3(P)=3\int_{0}^1\Vol_2((1-t)P_0+tP_1)\,dt.$$
Using $\Vol_2(A+B)=V(A+B,A+B)$ and the bilinearity of the mixed volume we get $\Vol_2((1-t)P_0+tP_1)=(1-t)^2\Vol_2(P_0)+2t(1-t)V(P_0,P_1)+t^2\Vol_2(P_1)$. Integrating, we obtain the claim.
\end{pf}

\begin{Lemma}\label{L:VolumeBound} 
Let $P_1,P_2\subset \R^3$ be lattice polytopes such that $P_1+P_2$ is $3$-dimensional. Then 
$\Vol_3(P_1+P_2)\geq \Vol_3(P_1)+3$.
\end{Lemma}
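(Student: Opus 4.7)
The plan is to use the trilinearity of the mixed volume to rewrite $\Vol_3(P_1+P_2)=V(P_1+P_2,P_1+P_2,P_1+P_2)$ as
\begin{equation*}
\Vol_3(P_1+P_2)=\Vol_3(P_1)+3V(P_1,P_1,P_2)+3V(P_1,P_2,P_2)+\Vol_3(P_2).
\end{equation*}
Thus the assertion reduces to
\begin{equation*}
3V(P_1,P_1,P_2)+3V(P_1,P_2,P_2)+\Vol_3(P_2)\geq 3.
\end{equation*}
Since $P_1,P_2$ are lattice polytopes, each term on the left is a non-negative integer, so it is enough to show that at least one of the two mixed volumes is strictly positive, which then gives a contribution of at least $3$.

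To establish this I would appeal to Minkowski's criterion (\rp{Mink}). Writing $V(P_1,P_1,P_2)=V(Q_1,Q_2,Q_3)$ with $Q_1=Q_2=P_1$, $Q_3=P_2$, the criterion says this vanishes iff some non-empty $I\subseteq\{1,2,3\}$ has $\dim(\sum_{i\in I}Q_i)<|I|$. The hypothesis $\dim(P_1+P_2)=3$ eliminates every $I$ containing $3$ together with $1$ or $2$, as well as $I=\{1,2,3\}$. The only surviving obstructions are $\dim P_2=0$ (from $I=\{3\}$) and $\dim P_1\leq 1$ (from $I\in\{\{1\},\{2\},\{1,2\}\}$). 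Under the natural hypothesis that both summands are positive-dimensional (forced in the lemma's intended use inside maximal Minkowski decompositions), $V(P_1,P_1,P_2)=0$ can hold only when $\dim P_1=1$; symmetrically, $V(P_1,P_2,P_2)=0$ only when $\dim P_2=1$.

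If both mixed volumes vanished simultaneously then $\dim P_1,\dim P_2\leq 1$, forcing $\dim(P_1+P_2)\leq 2$ and contradicting the hypothesis. Hence at least one mixed volume is $\geq 1$, and the bound follows.

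\textbf{Anticipated obstacle.} The only genuinely delicate point is applying Minkowski's criterion to mixed volumes with repeated arguments: one must be careful to track subsets of index positions rather than multisets of the underlying polytopes, and to confirm that $\dim(P_1+P_2)=3$ really does kill every large subset. A secondary point is the implicit assumption that $P_1$ and $P_2$ are positive-dimensional; without it the statement fails (e.g.\ if $P_1$ is a single lattice point and $P_2$ is a unit $3$-simplex, then $\Vol_3(P_1+P_2)=1<3$), but this is automatic in the paper's applications, where $P_1$ and $P_2$ appear as summands in a Minkowski decomposition.
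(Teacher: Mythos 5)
Your proof is correct, and you have also correctly flagged the implicit hypothesis that both $P_1$ and $P_2$ are positive-dimensional, which the lemma needs and which holds in all of the paper's applications. Your argument is in the same family as the paper's --- both rest on the trilinear expansion of $\Vol_3(P_1+P_2)$ and on Minkowski's criterion (\rp{Mink}) for the vanishing of a mixed volume --- but the paper organizes it more economically. Rather than expanding $V(P_1+P_2,P_1+P_2,P_1+P_2)$ and analyzing which of the two cross terms can be positive, the paper first picks a lattice segment $I\subseteq P_2$ with $P_1+I$ still $3$-dimensional, uses monotonicity of volume to pass to $\Vol_3(P_1+I)$, and then expands. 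Because $I$ is $1$-dimensional the terms $V(P_1,I,I)$ and $\Vol_3(I)$ vanish automatically, and one only has to observe $V(P_1,P_1,I)\geq 1$, which is immediate from \rp{Mink} once $P_1+I$ is $3$-dimensional. This replaces your case analysis of which index sets survive with a single application of the criterion. The trade-off is that the paper's segment-selection step tacitly needs $\dim P_1\geq 2$ (since $\dim(P_1+I)\leq\dim P_1+1$), whereas your direct expansion handles $\dim P_1=1$ as well; in both versions $\dim P_1=0$ genuinely breaks the statement, as you note. Either approach would be acceptable; the paper's is slightly shorter to write down, yours is slightly more self-contained in covering the low-dimensional summand.
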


\begin{pf} Since $P_1+P_2$ is $3$-dimensional, we can chose a lattice segment $I\subseteq P_2$ such that $P_1+I$ is $3$-dimensional.
Then, by the multilinearity of the mixed volume,
$$\Vol_3(P_1+P_2)\geq \Vol_3(P_1+I)=\Vol_3(P_1)+3V(P_1,P_1,I)+3V(P_1,I,I)+\Vol_3(I).$$
By \rp{Mink} the last two summands are zero and  $V(P_1,P_1,I)\geq 1$, since $P_1+I$ is $3$-dimensional.
\end{pf}

\section{Classifying pairs $(P,Q)$ with $L(P+Q)=2$}\label{S:classification-2}

In this section we concentrate on classifying all pairs $(P,Q)$ of lattice polytopes in $\R^3$, up to equivalence, satisfying $L(P)=L(Q)=1$ and $L(P+Q)=2$.  To simplify notation we write $|P|$ for the number of lattice points of a polytope $P\subset\R^3$, i.e. $|P|=|P\cap\Z^3|$.
We call $|P|$ the {\it size of $P$}.

\subsection{First observations}
In many of our combinatorial arguments below we reduce the problem to a finite number of cases which are then sorted 
with the aid of {\sc Magma}  \cite{Magma}. Throughout this section we mention some of the functions we wrote  for this purpose and explain what they do. 
A complete list of functions with code and description is contained in \url{https://github.com/isoprou/minkowski-length}. 

The proof of the next lemma uses the function {\tt minktwo}, which checks whether $L(P)\leq 2$ for a given lattice polytope $P$.
Our package also includes functions {\tt minkone} and {\tt minkthree}, which check whether $L(P)=1$ and $L(P)\leq 3$, respectively.

\begin{Lemma}\label{L:widthbound}
Let $P$ be a lattice triangle in $\R^3$ that satisfies $L(P)=1$ and let  $I$ be a primitive segment such that  $L(P+I)=2$. Let $v$ be a primitive normal to the lattice plane that contains~$P$.
\begin{itemize}
\item[(1)] If $P$ is a unit triangle then $\w_v(I)\leq 14$.
\item[(2)] If $P$ is equivalent to $T_0$ then $\w_v(I)\leq 2$.
\end{itemize}  
\end{Lemma}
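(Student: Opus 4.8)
The plan is to bound the lattice width $\w_v(I)$ by showing that if $I$ is too long in the direction $v$ transverse to the plane of $P$, then $P + I$ would contain a Minkowski sum of three positive-dimensional lattice polytopes, contradicting $L(P+I) = 2$.

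**Setting up.** First I would apply an element of $\AGL(3,\Z)$ to put $P$ in a standard position: in case (1), $P = \Delta^2 = [0, e_1, e_2]$ sitting in the plane $\{x_3 = 0\}$, and in case (2), $P = T_0 = [e_1, e_2, -e_1-e_2]$ in the same plane; in both cases $v = e_3$. After this normalization, $I$ is a primitive segment $[w_1, w_2]$ with $w_2 - w_1$ primitive, and I may translate so that $w_1 = 0$, so $I = [0, w]$ with $w = (a, b, c)$ primitive and $\w_v(I) = |c|$. After possibly applying a further unimodular map fixing the plane $\{x_3 = 0\}$ (i.e. an element of $\GL(2,\Z)$ acting on the first two coordinates, which preserves $P$ up to equivalence and fixes $v$), I can also normalize $(a,b) \bmod c$ — this is the reduction that cuts the problem down to finitely many cases.

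**The key dichotomy.** The engine is property (5) of the Minkowski length together with the superadditivity property (2): if $L(P+I) = 2$, then $P + I$ cannot contain $P_1 + P_2 + P_3$ with all $P_i$ of positive dimension. Now $P$ itself contains two primitive segments meeting only at a vertex — e.g. $[0,e_1]$ and $[0,e_2]$ in case (1), or $[e_1,e_2]$ translated appropriately in case (2) — so $P$ "contains" a Minkowski-sum-of-two in a loose sense; more precisely, a unit triangle equals the Minkowski sum $[0,e_1] + [0,e_2]$ is false, but it does contain the segments, and the real point is: if $I$ is long enough in the $v$-direction, one can extract from $P + I$ a segment $I' \subseteq I$ of lattice length $2$ plus a translate of a positive-dimensional sub-polytope of $P$ plus another primitive segment, giving three summands. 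I would make this quantitative: slice $P+I$ by the hyperplanes $\{x_3 = k\}$; for each integer $k$ between $0$ and $|c|$ the slice is a translate of $P$ (or contains one once $k$ is in the "interior" range), and if $|c|$ is large enough these translates, together with the vertical segment, assemble into a forbidden triple decomposition. This is essentially a pigeonhole/covering argument on the lattice points of $P+I$, and running the finitely many residue classes of $(a,b) \bmod c$ through {\tt minktwo} (as the lemma statement signals) disposes of the bounded remaining cases and pins down the exact constants $14$ and $2$.

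**Main obstacle.** The hard part will be establishing the sharp bound $\w_v(I) \le 14$ in case (1): the naive covering argument gives a bound, but getting it down to exactly $14$ requires carefully analyzing which segments $I$ of moderate height $c$ actually force a length-$3$ decomposition versus which are genuinely caught in a length-$2$ decomposition. This is where the finite case-check in {\sc Magma} does the real work — after the combinatorial reduction to $(a,b)$ in a fixed finite set of residues modulo $c$ for each $c$ up to some explicit a priori bound, one simply calls {\tt minktwo} on $P + [0,(a,b,c)]$ and records the largest $c$ for which the answer is "yes". Case (2) for $T_0$ should be much easier, since $T_0$ is a very symmetric triangle with no interior lattice point and the bound $2$ is small; I expect a direct argument: if $\w_v(I) \ge 3$ then $T_0 + I$ contains $[0, e_3] + [0, e_3] + (\text{edge of } T_0)$ or similar, contradicting $L = 2$. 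I would present case (2) with a short self-contained argument and case (1) by the reduction-plus-computation strategy.
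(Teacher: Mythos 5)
The overall shape of your plan matches the paper's --- normalize $P$ to a standard position, reduce the direction vector of $I$ to a representative modulo a group of transformations fixing the plane of $P$, then finish by a finite {\sc Magma} search with {\tt minktwo}. But there is a genuine gap in the middle: you never actually establish an a priori bound on $\w_v(I)$, and without one the ``finite case-check'' is not finite. You gesture at a pigeonhole/covering argument via the slices of $P+I$ by the planes $\{x_3=k\}$, claiming these are ``translates of $P$'' that for large $|c|$ ``assemble into a forbidden triple decomposition.'' This picture is wrong in general: for a slanted segment $I=[0,(a,b,c)]$ with $\gcd(a,b,c)=1$ the slice at an intermediate integer height $k$ is a translate of $P$ by the \emph{real} vector $(ka/c,\,kb/c,\,k)$, which is a lattice vector only for very special $k$, and the slice typically contains few or no lattice points at all. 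So no forbidden triple materializes from the slices, and your argument leaves $c$ unbounded.

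The paper supplies exactly the missing ingredient, and it is quite different from slicing. After normalizing the direction vector $u=(p,q,r)$ of $I$ to $0\le p\le q\le r$ (via a shear $\left(\begin{smallmatrix}1&0&*\\0&1&*\\0&0&\pm1\end{smallmatrix}\right)$ --- note that this is \emph{not} an element of $\GL(2,\Z)$ acting on the first two coordinates, as you describe, but a $z$-shear that reduces $(p,q)$ modulo $r$), one considers the half-open parallelepiped $\Pi$ spanned by $e_1,e_2,u$. Its Euclidean volume is $r$, so it contains exactly $r$ lattice points. The plane through $e_1,e_2,u+e_1,u+e_2$ cuts the closure of $\Pi$ into two equivalent copies of $P+I$. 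Counting lattice points by location (interior, facet-interior, vertex/edge) yields $|P+I|=e+f+6$ and $r=2e+f+1$, hence $2|P+I|-r\ge 11$. Combining with the bound $|P+I|\le (L(P+I)+1)^3=27$ (inequality \eqref{e:eight}) gives $r\le 43$, which is the a priori bound your search needs before {\sc Magma} tightens it to $14$. Without some argument of this flavor relating $r$ to $|P+I|$, the reduction to a finite problem never happens. Also, for part (2) the paper does not give a direct argument for $T_0$: it observes that $T_0$ contains a unit triangle, so part (1) already gives $r\le 14$, and then {\sc Magma} sharpens this to $r\le 2$. Your proposed direct argument (``$T_0+I$ contains $[0,e_3]+[0,e_3]+$ edge'' when $r\ge 3$) fails for slanted $I$ for the same reason as above.
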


\begin{proof} 
In part (1) we can assume that $P=[0, e_1, e_2]$ is the standard 2-simplex. By applying a matrix  of the form $\left(\begin{matrix}1&0&\ast\\ 0&1&\ast\\0&0&\pm1\end{matrix}\right)$, which fixes the $(x,y)$-plane, we can ensure
that the components of the direction vector $u=(p,q,r)$ of $I$ satisfy $0\leq p, q\leq r$. Due to symmetry we can further assume that  $0\leq p\leq q\leq r$. We then have $v=e_3$ and  $\w_v(I)=r$, so we need to show that $r\leq 14$.

Let $\Pi$ be the half-open parallelepiped  spanned by the vectors $e_1, e_2$, and $u$:
$$\Pi=\{\lambda_1e_1+\lambda_2e_2+\lambda_3u \mid 0\leq \lambda_i<1\}.$$
Note that $r$ is the Euclidean volume of $\Pi$ and, hence, $\Pi$ contains exactly $r$ lattice points (see \cite[VII.2.5]{Bar}).
On the other hand, the plane containing $e_1,e_2, u+e_1$, and $u+e_2$ subdivides the closure of $\Pi$ into $P+I$ and
its equivalent copy. Let $e$ be the number of interior lattice points in $P+I$ and $f$ be the number of lattice points in the relative interior of the facets of $P+I$.  Observe that $P+I$ has no lattice points in the  relative interior of the edges and the triangular facets. Then, on the one hand, $|P+I|=e+f+6$, and on the other hand
$r=|\Pi|=2e+f+1$. This implies $2|P+I|-r\geq 11$. 

Now, since $L(P+I)=2$ there are at most 27 lattice points in $P+I$, see \re{eight}. Combining with the previous inequality, we obtain $r\leq 43$. We next use {\sc Magma}  to check  for each $(p,q,r)$  with $0\leq p,q\leq r\leq 43$ whether $L(P+I)=2$. This allows us to improve the bound to $r\leq 14$ and the 
conclusion follows.

In part (2) we can assume that $P=T_0$. Since $T_0$ contains a unit triangle, by part (1) the direction vector $(p,q,r)$ of $I$ satisfies $0\leq p\leq q\leq r\leq 14$ . Using {\sc Magma}  we check for each such triple $(p,q,r)$  whether $L(P+I)=2$ and improve the bound to $r\leq 2$.
\end{proof}

Let  $P$ be lattice polytope with $L(P)=1$ and let  $I$ be a primitive segment  that satisfies  $L(P+I)=2$.  Let  $u=(x,y,z)$ be a direction vector of $I$. Then by \rl{widthbound}, for each face $F_i$ with a normal vector 
$v_i=(a_i,b_i,c_i)$ we have $|a_ix+b_iy+c_iz|\leq 14$. Our function {\tt GoodPolytope(P)} outputs the polytope $G$ defined by these inequalities. Furthermore, the function {\tt FindSegments(P)}
runs through all primitive $(x,y,z)\in G$, checks whether for $I=[(0,0,0), (x,y,z)]$  we indeed have $L(P+I)=2$, and outputs all $(x,y,z)$ for which this holds true. The function {\tt AddTriangleHuh(P)} checks whether such primitive segments can be used to form a lattice triangle $T$ with $L(P+T)=2$.  The function {\tt FindTriangles(P)} lists all such triangles. The function {\tt AddTetraHuh(P)} checks whether one can use these segments to form a tetrahedron $T$, possibly degenerate, that satisfies $L(P+T)=2$. The function {\tt FindTetra(P)} lists all such tetrahedra.

\vspace{.2cm}

We now turn to our classification problem.  A pair of primitive lattice segments $(P,Q)$ with $L(P+Q)=2$ is equivalent to either $([0,e_1],[0,e_2])$ or $([0,e_1],[0,e_1+2e_2])$, \cite[Lemma 1.7]{SoSo1} (see also the proof of \rl{three_segments}). As we just explained, given a lattice polytope $P$  one can use \rl{widthbound} and function {\tt FindSegments} to list all primitive segments $Q$ that satisfy $L(P+Q)=2$.  In particular, we can enumerate up to the equivalence all  pairs $(P,Q)$ where $P$ is a unit triangle and $Q$ is a primitive segment that satisfy $L(P+Q)=2$.  If $P$ and $Q$ are both unit triangles we can  assume that $P=[0,e_1,e_2]$  and then use the function  {\tt FindTriangles} to list all the unit triangles $Q$ that satisfy $L(P+Q)=2$. We do not write these lists explicitly here as they are long and  not needed for our main results. Thus, in what follows we assume that $|P|\geq 4$ and $|Q|\geq 3$. In addition, we can assume $|P|\geq |Q|$, by symmetry. Recall also that $L(P)=L(Q)=1$ implies $|P|, |Q|\leq 8$, by \re{eight}. All these cases are covered by the following four possibilities:
\begin{enumerate}
\item $P$ is an empty lattice polytope,
\item $P$ is equivalent to $T_0$,
\item $P$ has a subpolytope equivalent to $T_0$,
\item $P$ is clean, but not empty.
\end{enumerate}
We address each of them below.

\subsection{$P$ is an empty lattice polytope}
In this subsection we assume that $P$ is an empty lattice polytope, i.e., the only lattice points of $P$ are its vertices. We treat the cases  $|P|=4$ and $|P|=5$ separately and then show that for  $|P|\geq 6$ 
there is no lattice polytope $Q$ with $|Q|\geq 3$ and $L(P+Q)=2$.

White's theorem ~\cite{White}  states that every empty lattice tetrahedron in $\R^3$ is  equivalent to a tetrahedron 
$$T_{a,b}=\begin{bmatrix}1&0&0&a\\0&1&0&b\\0&0&1&1\end{bmatrix}.
$$
(We use such matrix form to denote the convex hull of the columns of the matrix.)
Here $a$ and $b$ are non-negative, relatively prime integers. Moreover, $T_{a,b}$ is  equivalent to $T_{a',b'}$ if and only if $a+b=a'+b'$ and $a'=\pm a^{-1} {\rm mod}(a+b)$.  It was further shown in~\cite{Scarf} that any empty lattice polytope in $\R^3$ is of width one. 

\begin{Lemma}\label{L:empty_tetrahedra}
Let $P$ be an empty lattice tetrahedron and  let $Q$ be a lattice polytope such that $L(Q)=1$ and $L(P+Q)=2$. Then
\begin{itemize}
\item[(1)] If $|Q|=4$ then $\Vol_3(P)\leq 2$;
\item[(2)] If $|Q|=3$ then $\Vol_3(P)\leq 5$.
\end{itemize}
\end{Lemma}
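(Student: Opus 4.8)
The plan is to exploit White's theorem together with \rl{VolumeBound} and a width-one normal-form argument for $P$. By White's theorem we may assume $P=T_{a,b}$ with $\gcd(a,b)=1$, and by Scarf's result $P$ has lattice width one; after an affine unimodular change of coordinates we arrange that the two lattice planes $z=0$ and $z=1$ contain all of $P$, with $P_0 = P\cap\{z=0\}$ a primitive segment (or a point) and $P_1 = P\cap\{z=1\}$ a primitive segment (or a point). First I would record that since $P+Q$ has Minkowski length $2$, any lattice segment $I\subseteq Q$ satisfies $L(P+I)=2$ by property (5) of Minkowski length; in particular $P+I$ is $3$-dimensional only for suitably constrained $I$, and whenever it is $3$-dimensional, \re{eight} gives $\Vol_3(P+I)\le |P+I|-1 \le 26$, while \rl{VolumeBound} gives $\Vol_3(P+I)\ge \Vol_3(P)+3$.

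Next I would split on the dimension of $Q$. If $Q$ is $3$-dimensional then it contains a lattice segment $I$ with $P+I$ three-dimensional, so $\Vol_3(P)\le \Vol_3(P+I)-3$, and the main task is to bound $\Vol_3(P+I)$ sharply enough: here I would use that $L(P+I)=2$ forces $|P+I|\le 27$, but more importantly that $P+I$ itself cannot contain a Minkowski-length-$3$ configuration, which (combined with \rl{widthbound}, since $P$ contains a unit triangle when $\Vol_3(P)\ge1$, or is width-one-degenerate otherwise) restricts the direction vector of $I$ to a finite list. If $Q$ is $2$-dimensional (a triangle, since $|Q|\ge3$ and $L(Q)=1$ means $Q$ is a unit triangle or a $T_0$-type triangle when $|Q|=3$, and an empty/clean triangle when $|Q|=4$), I would use \rl{widthbound} directly: each primitive edge direction of $Q$, paired with $P$, is constrained by $\w_v(\cdot)\le 14$, and then the handful of admissible $Q$'s can be enumerated. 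In both subcases the endgame is a finite check: by \rl{widthbound} and the {\tt GoodPolytope}/{\tt FindSegments}/{\tt FindTetra} pipeline, the pairs $(P,Q)$ with $|Q|=4$, resp.\ $|Q|=3$, satisfying $L(P+Q)=2$ form a finite list, and one reads off $\max\Vol_3(P)=2$, resp.\ $=5$, from that list (these maxima are attained by $P$ equivalent to $S_2$, resp.\ $K_2$, from Table~\ref{table}, consistent with the later classification).

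The cleanest organization is probably: (a) reduce $P$ to a width-one normal form via White + Scarf; (b) observe $\Vol_3(P) = \Vol_2(P_0)+V(P_0,P_1)+\Vol_2(P_1)$ is small unless $P_0,P_1$ are both segments of large lattice length or the mixed term is large, and bound each piece using that a long primitive segment inside $P$ plus a segment inside $Q$ would already contradict $L(P+Q)=2$; (c) when $Q$ is $2$-dimensional, apply \rl{widthbound}(1) to each edge of $Q$ (each $P$-face pairing) to cut the search to finitely many $Q$, then verify with {\tt FindTriangles}; (d) when $Q$ is $3$-dimensional, pick a segment $I\subseteq Q$ with $P+I$ full-dimensional, bound $\Vol_3(P)\le\Vol_3(P+I)-3$ via \rl{VolumeBound}, and finish the finite enumeration with {\tt FindTetra}.

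\textbf{Main obstacle.} The delicate point is controlling $\Vol_3(P)$ \emph{before} invoking the computer: \rl{VolumeBound} only gives $\Vol_3(P)\le\Vol_3(P+I)-3$ and the crude bound $\Vol_3(P+I)\le 26$ is far too weak to yield $\Vol_3(P)\le 2$ or $\le 5$ directly. So I expect the real work is showing that the direction vector of a segment $I\subseteq Q$ (or of each edge of $Q$) lies in a genuinely small region — this is exactly what \rl{widthbound} buys us, but applying it requires knowing $P$ contains a unit triangle (true once $\Vol_3(P)\ge1$, i.e.\ whenever $P$ is a nondegenerate empty tetrahedron), and then handling the normal direction of $P$'s supporting plane versus the other edge directions of $Q$. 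Once the search space is provably finite, the remaining step is a {\sc Magma} verification, which is routine given the functions already described; the intellectual content is entirely in establishing finiteness and reading off the extremal $P$.
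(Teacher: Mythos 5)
Your proposal correctly locates the right toolkit (White + Scarf normal form, \rl{widthbound}, \rl{VolumeBound}, finite {\sc Magma} enumeration) and, to your credit, honestly flags the crux as a ``main obstacle'': you cannot run the computer over infinitely many $T_{a,b}$, and neither $\Vol_3(P+I)\leq\Vol_3(P)+3$ nor a lattice-point count alone will bound $\Vol_3(P)=a+b$. But you leave that obstacle unresolved, and that is exactly where the paper's proof does its real work.

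The missing idea is purely algebraic and specific to the form $T_{a,b}$: the four facet normals $n_1,\dots,n_4$ of $T_{a,b}$ satisfy
$(a+b)e_1=an_1+n_4$, $(a+b)e_2=bn_1+n_3$, $(a+b)e_3=n_1+n_2$.
Pairing these with the direction vector $u=(x,y,z)$ of a primitive segment in $Q$ and applying \rl{widthbound}(1), which gives $|\langle u,n_i\rangle|\leq 14$, yields $(a+b)|x|\leq 14(a+1)$, $(a+b)|y|\leq 14(b+1)$, and $(a+b)|z|\leq 28$. This last inequality is what forces $z=0$ when $a+b>28$, and a short further argument (plus a small search in the $z=0$ slice) then rules out $a+b>28$ entirely, after which $a+b\leq 28$ makes the {\sc Magma} check over all remaining $(a,b)$ genuinely finite. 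Your write-up gestures at ``the direction vector of $I$ lies in a genuinely small region'' and at applying \rl{widthbound} once $P$ contains a unit triangle, but never converts the bounds on $\langle u,n_i\rangle$ into a bound on $a+b$ itself; without the normal-vector relations you have no mechanism to do so, so the finiteness claim is not actually established.

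One secondary error: you assert $\Vol_3(P+I)\leq |P+I|-1\leq 26$. The first inequality is false in general (e.g.\ $\Vol_3(T_{a,b})=a+b$ but $|T_{a,b}|=4$), and nothing like it follows from \re{eight}. You do say this bound ``is far too weak'' anyway, so it is not load-bearing, but you should not write it down as if it were true. The observation in part (b) of your organization, that $\Vol_3(P)=\Vol_2(P_0)+V(P_0,P_1)+\Vol_2(P_1)$ for a width-one polytope, is correct via \rl{mix} but is not the route the paper takes, and your plan never shows how to bound those summands either; the normal-vector identities above are the substitute.
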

\begin{proof} 
By White's Theorem we can assume that $P=T_{a,b}$, where $b\geq a\geq 0$, $a+b>0$, and $\gcd(a,b)=1$.
Then $\Vol_3(P)=a+b$ and the normals to the facets of $P$ are 
$$n_1=(1,1,1),\ n_2=(-1,-1,a+b-1),\ n_3=(-b,a,-b),\  {\rm and}\ n_4=(b,-a,-a).$$
They satisfy the following relations
\begin{equation}\label{e:relations}
(a+b)e_1=an_1+n_4,\ (a+b)e_2=bn_1+n_3,\ (a+b)e_3=n_1+n_2.
\end{equation}
Let $u=(x,y,z)$ with $z\geq 0$ be a direction vector of a primitive lattice segment in $Q$. 
By part (1) of Lemma~\ref{L:widthbound} we have $|\langle u,n_i\rangle|\leq 14$ for $1\leq i\leq 4$.
Therefore, by \re{relations},
$$(a+b)|x|=|\langle u,an_1+n_4\rangle|\leq a|\langle u,n_1\rangle|+|\langle u,n_4\rangle|\leq 14(a+1).$$
Similarly, we obtain $(a+b)|y|\leq 14(b+1)$ and $(a+b)|z|\leq 28$.

Suppose first that  $a+b>28$. Then  $z=0$ and, hence, $|x+y|=|\langle u,n_1\rangle|\leq 14$ and
$|bx-ay|=|\langle u,n_3\rangle|\leq 14$. Switching the sign of $u$ if needed, we can assume
 that $x\geq 0$. But if $x\geq 1$ and $y\leq -1$ then $14\geq |bx-ay|=bx+a(-y)\geq a+b>28$, a contradiction. Hence, we can assume that $x,y\geq 0$.
We next use {\sc Magma}  to find all segments $I$ with direction vector $(x,y,0)$ such that  $x+ y\leq 14$, $x,y\geq 0$ and the sum of $I$ and $\conv\{e_1,e_2,e_3\}\subset P$ has Minkowski length~2.
The only such segments are the ones with $(x,y)=(1,0)$, $(0,1)$, and $(1,1)$. Note that for the first two of these options 
the condition $|bx-ay|\leq 14$ implies $a\leq 14$ and $b\leq 14$, which contradicts  $a+b>28$. Hence, 
 these two directions cannot appear simultaneously. We conclude that  there is no $Q$ with $|Q|\geq 3$ in the case when  $a+b>28$.

Next  for each $P$ with $0\leq a\leq b$,  $a+b\leq 28$, and $\gcd(a,b)=1$ we run the functions {\tt AddTriangleHuh} and {\tt AddTetraHuh} and 
conclude that  $a+b\leq 2$ in case (1) and $a+b\leq 5$ in case (2), and the conclusions of the lemma follow.
\end{proof}

\begin{Rem}\label{R:empty+3}
 Note that  by White's theorem there are six equivalence classes of empty tetrahedra with volume at most 5 with $(a,b)=(0,1), (1,1), (1,2), (1,3), (1,4), (2,3)$.
Using the functions {\tt GoodPolytope} and {\tt FindTriangles} one can enumerate (up to equivalence) all $(P,Q)$ with $L(P+Q)=2$, where $P$ is an empty tetrahedron and $Q$ is a unit triangle.
\end{Rem}

\begin{Th}\label{T:two_empty_tetrahedra} Let $P,Q\subset\R^3$ be empty tetrahedra such that $L(P+Q)=2$. Then each of $P$ and $Q$ is equivalent to $S_1$ or $S_2$, as defined in Table~\ref{table}.
\end{Th}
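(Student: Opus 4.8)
The goal is to show that if $P$ and $Q$ are empty tetrahedra with $L(P+Q)=2$, then each is equivalent to $S_1=[0,e_1,e_2,e_3]$ (the unit $3$-simplex, $\Vol_3=1$) or $S_2=[e_1,e_2,e_3,e_1+e_2+e_3]$ ($\Vol_3=2$). Recall from \rl{empty_tetrahedra}(1) that any empty tetrahedron $P$ admitting an empty tetrahedron $Q$ (which has $|Q|=4$ and $L(Q)=1$) with $L(P+Q)=2$ must satisfy $\Vol_3(P)\le 2$, and by symmetry the same holds for $Q$. So the first step is purely to enumerate the empty tetrahedra of volume $1$ and $2$ up to $\AGL(3,\Z)$-equivalence. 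By White's theorem every empty tetrahedron is equivalent to some $T_{a,b}$ with $0\le a\le b$, $\gcd(a,b)=1$, and $\Vol_3(T_{a,b})=a+b$; hence volume $1$ forces $(a,b)=(0,1)$ and volume $2$ forces $(a,b)=(1,1)$. One then checks that $T_{0,1}$ is equivalent to $S_1$ and $T_{1,1}$ is equivalent to $S_2$ — this is a direct coordinate computation applying an affine unimodular map (for $T_{1,1}$, the shear $e_3\mapsto e_3$, $e_1\mapsto e_1$, $e_2\mapsto e_2$ composed with a suitable translation carries $\{e_1,e_2,e_3,(1,1,1)\}$ onto the standard "alternating" simplex).

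The remaining content is that at least one such pair $(P,Q)$ with both of volume $1$ or $2$ actually achieves $L(P+Q)=2$, and that no constraint beyond volume is hidden — but since the theorem only \emph{classifies} the possibilities for $P$ and $Q$ individually (asserting each is equivalent to $S_1$ or $S_2$), this direction is already handed to us by \rl{empty_tetrahedra}. Concretely, the argument is: given the hypothesis $L(P+Q)=2$, apply \rl{empty_tetrahedra}(1) with the roles of $P$ and $Q$ to conclude $\Vol_3(P)\le 2$ and $\Vol_3(Q)\le 2$; then invoke White's theorem together with the volume bound to pin down the equivalence class of each. So the proof is essentially a two-line deduction from \rl{empty_tetrahedra} plus the White-theorem bookkeeping of which classes have volume $\le 2$.

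\textbf{Main obstacle.} There is no serious obstacle at this stage: the heavy lifting — the bound $\Vol_3(P)\le 2$ — is precisely \rl{empty_tetrahedra}, whose own proof already absorbed the combinatorial case analysis (the width-one reductions from \rl{widthbound} and the {\sc Magma}-assisted checks {\tt AddTriangleHuh}, {\tt AddTetraHuh} over the finitely many $(a,b)$ with $a+b\le 28$). The only thing to be careful about is the identification of the low-volume $T_{a,b}$ with the named polytopes $S_1,S_2$ in Table~\ref{table}: one must exhibit the explicit affine unimodular maps, and note that the equivalence criterion in White's theorem ($T_{a,b}\sim T_{a',b'}$ iff $a+b=a'+b'$ and $a'\equiv\pm a^{-1}\bmod(a+b)$) confirms there is exactly one class in each of volumes $1$ and $2$, so no ambiguity arises. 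I would also remark in passing that this theorem will feed into the later classification of maximal decompositions with two $3$-dimensional summands, which is why it is worth stating separately.
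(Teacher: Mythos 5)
Your proposal is correct and follows the same route as the paper: apply \rl{empty_tetrahedra}(1) (with $L(Q)=1$ forced by superadditivity from $L(P+Q)=2$) to get $\Vol_3(P),\Vol_3(Q)\le 2$, then invoke White's theorem to see that only $(a,b)=(0,1)$ and $(1,1)$ occur, matching $S_1$ and $S_2$. One tiny remark: your description of the map for $T_{1,1}$ is unnecessarily elaborate, since $T_{1,1}$ and $S_2$ have the identical vertex set $\{e_1,e_2,e_3,e_1+e_2+e_3\}$, so no shear or translation is needed at all.
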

\begin{proof} In the notation introduced in Lemma~\ref{L:empty_tetrahedra} we proved that $a+b\leq 2$, so we have two options,  $a=0, b=1$ and $a=b=1$, where the first option gives a tetrahedron equivalent to $S_1$ (since both have volume 1) and the second one gives $S_2$.
\end{proof}

The next proposition shows that if each of $P$ and $Q$ is equivalent to $S_2$ then 
$P$ and $Q$ coincide up to lattice translation.

\begin{Prop}\label{P:twoS_2}  
Suppose that lattice polytopes $P,Q\subset\R^3$ are each equivalent to $S_2$ and that $L(P+Q)=2$. Then up to a lattice translation we have  $P=Q$.
\end{Prop}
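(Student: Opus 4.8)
The plan is to work in explicit coordinates, using $S_2$ as the standard representative. Recall $S_2$ has vertices $\{e_1, e_2, e_3, e_1+e_2+e_3\}$; after the affine unimodular change sending $e_1\mapsto 0$, $e_2\mapsto e_1$, $e_3\mapsto e_2$, $e_1+e_2+e_3\mapsto e_1+e_2+e_3$ (or directly), $S_2$ is equivalent to the tetrahedron with vertices $\{0, e_1, e_2, e_1+e_2+e_3\}$, whose only lattice points are its vertices and which has lattice width one in the direction $e_3$ (the only lattice point at level $z=0$ is a triangle, at level $z=1$ a single vertex). Since $P$ and $Q$ are both equivalent to $S_2$, after applying one $\AGL(3,\Z)$-map (the same for both, as allowed in a pair) and independent translations, I may assume $P$ equals this standard copy, with $P$ sitting between the planes $z=0$ and $z=1$: at $z=0$ a unit triangle $P_0$, at $z=1$ a single point. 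Then $Q$, being equivalent to $S_2$, is the image of this standard copy under some $\varphi\in\AGL(3,\Z)$.

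The key step is to pin down $\varphi$ using the constraint $L(P+Q)=2$ together with the classification (really, the numerics) we already have. First I would apply \rl{mix}: writing $P$ as the convex hull of $(P_0\times\{0\})\cup(\{pt\}\times\{1\})$ and similarly slicing $Q$ along the direction of its own width-one structure, I get $\Vol_3(P)=\Vol_3(Q)=2$. More usefully, I would combine \rl{empty\_tetrahedra}(1) — applied to $P$ with $Q$ in the role of the polytope of size $4$ — which already forced $a+b\le 2$ in White's normal form, i.e. forced $P\cong S_2$ is as rigid as possible; and then I would run the same argument with the roles reversed to control $Q$ relative to $P$. Concretely: let $v$ be the primitive normal to the width-one facet plane of $P$ (so $v=e_3$ in our coordinates), and let $I$ be any primitive edge of $Q$. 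By \rl{widthbound}(1) applied to each of the four unit-triangle facets of $P$ (every facet of $S_2$ is a unit triangle), the four facet-normal functionals of $P$ take values of absolute value $\le 14$ on the direction vector of $I$; since those four normals span $\R^3$ over $\Q$ with bounded denominators, the direction vectors of $Q$'s edges lie in a fixed finite set. Then I would invoke the {\sc Magma} functions already set up — {\tt GoodPolytope}, {\tt FindSegments}, {\tt FindTetra} — exactly as in the proof of \rl{empty\_tetrahedra}: enumerate all primitive segments $I$ with $L(P+I)=2$, then all empty tetrahedra $Q$ built from such segments with $L(P+Q)=2$ and $Q\cong S_2$, and read off that every such $Q$ is a lattice translate of $P$.

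The main obstacle I anticipate is organizing the finite case-check so that it genuinely yields $Q=P$ (up to translation) and not merely $Q$ equivalent to $P$: a priori $\varphi$ could be a nontrivial unimodular map that still carries $S_2$ into a position with $L(P+\varphi(S_2))=2$. The resolution is that $P+Q$ has at most $(L+1)^3=27$ lattice points by \re{eight}, which is a severe constraint: if $Q$ is translated-but-not-unimodularly-aligned with $P$, the Minkowski sum $P+Q$ tends to be ``too big'' — in particular $\Vol_3(P+Q)$ grows via the mixed-volume cross terms $3V(P,P,Q)+3V(P,Q,Q)$ unless $P$ and $Q$ are homothetic in the strong sense of being translates. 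So I would argue: by \rl{VolumeBound}-type reasoning (or directly from \rl{mix} after slicing), $\Vol_3(P+Q)\ge \Vol_3(P)+\Vol_3(Q)+\text{(positive cross terms)}$, and $L(P+Q)=2$ caps $\Vol_3(P+Q)$; pushing this through forces all cross mixed volumes to their minimum, which forces $Q$ to be a translate of $P$. Where a clean inequality argument does not quite close the gap, the {\sc Magma} enumeration over the (now provably finite) list of candidate $Q$ finishes the proof, just as in the lemmas preceding this proposition.
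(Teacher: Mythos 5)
Your plan is the paper's plan: normalize $P=S_2$, run {\tt FindTetra} (built on {\tt GoodPolytope}/{\tt FindSegments}) to list all $Q$ with $|Q|=4$ and $L(S_2+Q)=2$, filter for normalized volume $2$ (which among empty tetrahedra singles out the $S_2$-class by White's theorem), and observe the survivors are lattice translates of $S_2$. Two corrections to the surrounding exposition are needed, though. Your claimed width-one model of $S_2$ is wrong: the tetrahedron $[0,e_1,e_2,e_1+e_2+e_3]$ has $\det(e_1,e_2,e_1+e_2+e_3)=1$, so it is a unit $3$-simplex equivalent to $S_1$, not to $S_2$ (which has normalized volume $2$), and the coordinate map you wrote down does not preserve the lattice. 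In fact $S_2=\{e_1,e_2,e_3,e_1+e_2+e_3\}$ is already in width-one position in the $e_3$-direction, with the $z=0$ slice the primitive segment $[e_1,e_2]$ and the $z=1$ slice the primitive segment $[e_3,e_1+e_2+e_3]$ — two segments (in different directions), not a triangle over a single vertex; the latter describes $S_1$.

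Second, the speculative mixed-volume paragraph does not close the gap on its own: having $V(P,P,Q)$ and $V(P,Q,Q)$ at their minimum values consistent with $L(P+Q)=2$ does not characterize $Q$ as a translate of $P$, and no clean inequality is going to do that rigidity work here. Neither issue undermines the proof, because the actual content is the finite {\sc Magma} enumeration, which you correctly fall back on at the end and which is precisely what the paper does.
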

\begin{proof}
We run the function {\tt FindTetra} for $S_2$ which  outputs all $Q$ with $|Q|=4$ such that $L(S_2+Q)=2$. 
We then check which tetrahedra in the output are of normalized volume 2. 
The output contains two tetrahedra, each of which is a lattice translation of $S_2$.
\end{proof}

\begin{Th}\label{T:empty_5points}
Let $P$ be an empty lattice polytope with $|P|=5$ and $L(P)=1$. Let $Q$ be a lattice polytope such that $|Q|=4$, $L(Q)=1$, and $L(P+Q)=2$.
Then the pair $(P, Q)$ is  equivalent to the pair $(E, S_2)$, as defined in Table~\ref{table}. 
\end{Th}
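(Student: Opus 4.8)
The goal is to classify pairs $(P,Q)$ where $P$ is an empty lattice polytope with $|P|=5$, $L(P)=1$, and $Q$ is a lattice polytope with $|Q|=4$, $L(Q)=1$, $L(P+Q)=2$, and show the only possibility up to equivalence is $(E,S_2)$. The plan is to proceed in two stages: first pin down $P$ up to equivalence (there should be very few empty $5$-point polytopes of Minkowski length one), then for each such $P$ run the computer-assisted search from \rl{widthbound} to enumerate all admissible $Q$.

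\emph{Step 1: Normalizing $P$.} By Scarf's theorem (cited after \rl{empty_tetrahedra}), every empty lattice polytope in $\R^3$ has lattice width one, so $P$ lies between two consecutive parallel lattice planes, say $z=0$ and $z=1$. Thus $P=\conv\big((P_0\times\{0\})\cup(P_1\times\{1\})\big)$ for lattice polygons $P_0,P_1\subset\R^2$, and emptiness forces $P_0$ and $P_1$ to be empty lattice polygons, i.e. each is either a point, a primitive segment, or a unit triangle. Since $|P|=5$, the multiset of sizes $\{|P_0|,|P_1|\}$ must be $\{2,3\}$ (a primitive segment and a unit triangle); the options $\{1,\cdot\}$ give $|P|\le 4$ and $\{3,3\}$ would give $|P|=6$ (and one checks $L=1$ fails or the polytope is not empty in the other combinations). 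So $P_0$ is a unit triangle and $P_1$ a primitive segment (or vice versa). Up to $\AGL(2,\Z)$ acting on the first two coordinates together with the shear maps $\left(\begin{smallmatrix}1&0&*\\0&1&*\\0&0&1\end{smallmatrix}\right)$ fixing the plane structure, one can bring $P_0=[0,e_1,e_2]$ and then normalize $P_1$; checking emptiness of $P$ and $L(P)=1$ (using {\tt minkone}) cuts the finitely many candidate segment placements down, and the surviving polytope is exactly $E$ with vertex set $\{0,e_1,e_2,e_3,e_1+e_2+e_3\}$, which indeed has $\Vol_3 = 3$ and width one as in Table~\ref{table}. This identification $P\simeq E$ is the combinatorial heart of the argument.

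\emph{Step 2: Enumerating $Q$.} Now fix $P=E$. Since $Q$ has $L(Q)=1$ and contains a primitive segment $I$ with $L(E+I)=2$, \rl{widthbound} bounds the widths $|\langle u,n_i\rangle|\le 14$ of any primitive direction $u$ of such $I$ against the facet normals $n_i$ of $E$. This is precisely the setup for the {\tt GoodPolytope} function: it produces the finite polytope $G$ of admissible directions, {\tt FindSegments} lists the primitive segments $I$ with $L(E+I)=2$, and {\tt FindTetra}/{\tt AddTetraHuh} then assembles all $4$-point lattice polytopes $Q$ (including possibly degenerate tetrahedra and, by inspection, no quadrilateral can be empty with $L=1$, so $Q$ is a tetrahedron or is flat) satisfying $L(E+Q)=2$. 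Running this search and then filtering for $|Q|=4$ and $L(Q)=1$ yields, up to the residual equivalence fixing $E$, only $Q\simeq S_2$; one confirms $L(E+S_2)=2$ directly with {\tt minktwo}. Combining with Step 1 gives the claimed classification $(P,Q)\simeq(E,S_2)$.

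\emph{Main obstacle.} The genuinely delicate part is Step 1: while Scarf's width-one result makes the structure transparent, one must carefully verify that among the combinations $\{|P_0|,|P_1|\}$ the only empty polytope of Minkowski length one with five lattice points is $E$ — in particular ruling out configurations where $P_1$ is a translate/shear of the triangle's base giving a non-empty polytope or one with $L(P)\ge 2$, and checking that distinct placements of the primitive segment over the unit triangle all fall into a single $\AGL(3,\Z)$-class. This is a finite check but requires being systematic about the shear normalizations. Step 2 is then a bounded computer search whose correctness rests entirely on \rl{widthbound}, already established.
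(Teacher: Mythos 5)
Your Step 2 is essentially identical to the paper's second half — running the bounded search implied by \rl{widthbound} (via {\tt FindTetra}) for $P=E$ — so that part is fine. The problem is Step 1.

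You claim that the conditions $|P|=5$, $P$ empty, and $L(P)=1$ alone pin down $P\simeq E$ after finitely many shear-normalized checks. This is false, and the search space you describe is not finite. By Howe's theorem (the 5-point analogue of White's theorem, cited in the paper from~\cite{Scarf}), every empty 5-point lattice 3-polytope is equivalent to
$\left[\begin{smallmatrix} 0&1&0&0&a\\ 0&0&1&0&b\\ 0&0&0&1&1\end{smallmatrix}\right]$
with $b\geq a\geq 0$, $a+b>0$, $\gcd(a,b)=1$, and $a+b$ is unbounded. Moreover, for generic coprime $(a,b)$ with $a\geq 1$ these polytopes do have Minkowski length one (just as the empty tetrahedra $T_{a,b}$ are dps-polytopes with $L=1$ for all coprime $(a,b)$), so $L(P)=1$ does not winnow the family down to $E$. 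Your ``finitely many candidate segment placements'' never materializes: the placement of the primitive segment over the unit triangle is itself an infinite family.

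What actually makes the search finite is the interaction with $Q$: the paper removes the origin from Howe's normal form to get the empty tetrahedron $T_{a,b}$, and then applies \rl{empty_tetrahedra}\,(1) — which crucially uses the hypothesis that there exists $Q$ with $|Q|=4$, $L(Q)=1$, and $L(P+Q)=2$ — to conclude $a+b\leq 2$. Only then is the case split finite, giving $(a,b)\in\{(0,1),(1,1)\}$, with $(0,1)$ ruled out because that $P$ contains a unit square and hence $L(P)=2$. Your proposal skips this entirely; without invoking $Q$ to bound $a+b$, the normalization of $P$ cannot be completed, and the proof has a genuine gap.
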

\begin{proof}
By Howe's Theorem~\cite{Scarf}, $P$ is equivalent to $\left[\begin{matrix} 0&1&0&0&a\\ 0&0&1&0&b\\ 0&0&0&1&1\end{matrix}\right]$, where $b\geq a\geq 0$, $a+b>0$, and $\gcd(a,b)=1$.
By removing the origin from the set of vertices of $P$ and applying Lemma~\ref{L:empty_tetrahedra} to the obtained empty tetrahedron we conclude that $a+b\leq 2$, and, as in Theorem~\ref{T:two_empty_tetrahedra},  
either  $a=b=1$ or $a=0$, $b=1$, where the latter is ruled out since then  $L(P)=2$.

For $a=b=1$, that is, for $P=E$ we run the function {\tt FindTetra} which outputs all $Q$ with $|Q|=4$ such that $L(P+Q)=2$.  The output is a lattice shift of $S_2$.
\end{proof}

\begin{Prop}\label{P:empty_6points}
Let $P$ be an empty lattice polytope with $|P|=6$ and $L(P)=1$. Let $Q$ be a lattice polytope such that $|Q|\geq 3$ and  $L(Q)=1$. Then  $L(P+Q)\geq 3$.
\end{Prop}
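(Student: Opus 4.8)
The plan is to follow the same template as the preceding empty-polytope results (Lemma~\ref{L:empty_tetrahedra}, Theorems~\ref{T:two_empty_tetrahedra} and~\ref{T:empty_5points}), using the width-one structure of empty lattice polytopes together with the width bound of Lemma~\ref{L:widthbound} to reduce to a finite {\sc Magma} search. First I would invoke the classification of empty lattice polytopes: by Howe's theorem \cite{Scarf} any empty lattice polytope with $|P|=6$ that is not a tetrahedron with extra vertices is, up to $\AGL(3,\Z)$-equivalence, a prism-like polytope living between two parallel lattice planes $z=0$ and $z=1$; its lattice points in the plane $z=0$ form an empty lattice polygon and likewise in $z=1$. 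Since $|P|=6$ and $P$ has width one, the two "floors" contribute $3+3$, $2+4$ (impossible for an empty polygon, as empty polygons have at most $3$ lattice points unless they are segments), or include a segment floor. So the relevant normalized form of $P$ has its six lattice points as $\{0,e_1,e_2\}\times\{0\}\cup\{v_1,v_2,v_3\}\times\{1\}$ for a short list of triples $(v_1,v_2,v_3)$, or as a "double triangle" configuration. This is a finite (small) list once one quotients by the stabilizer of the unit triangle.

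Next, for each candidate $P$, I would apply Lemma~\ref{L:widthbound}(1): since $|P|=6\ge 4$ and $L(P)=1$, $P$ contains a unit triangle, hence any primitive segment $I$ with $L(P+I)=2$ has $\w_{v}(I)\le 14$ for $v$ a normal to any of the lattice planes of the unit triangle; more precisely, combining the facet-normal relations of $P$ (as in the proof of Lemma~\ref{L:empty_tetrahedra}, using that $P$ has width one so one normal is $e_3$) bounds all three coordinates of the direction vector of $I$. This confines the admissible primitive directions to the finite polytope $G$ output by {\tt GoodPolytope}$(P)$. Then I would run {\tt FindSegments}$(P)$ to list the primitive segments $I$ with $L(P+I)=2$, and {\tt AddTriangleHuh}$(P)$ / {\tt AddTetraHuh}$(P)$ to test whether any lattice polytope $Q$ with $|Q|\ge 3$, $L(Q)=1$ can be built from such segments with $L(P+Q)=2$. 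The assertion is that in every case the answer is negative, i.e. $L(P+Q)\ge 3$.

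The main obstacle, as in the earlier lemmas, is the combinatorial step of cutting the problem down to a provably complete finite list of candidate $P$: one must be careful that Howe's classification is applied correctly and that no equivalence class of empty $6$-point polytope with $L(P)=1$ is missed (in particular the non-tetrahedral ones and the width-one "bipyramid"/prism shapes). A secondary subtlety is that the crude bound $\w_v(I)\le 14$ from Lemma~\ref{L:widthbound}(1) may leave a large but still finite search space for {\tt FindSegments}; one can sharpen it first via the facet-normal linear relations of each particular $P$ (the inequalities $|\langle u,n_i\rangle|\le 14$ over the four or more facets), exactly as was done to get $(a+b)|z|\le 28$ in Lemma~\ref{L:empty_tetrahedra}, which makes the {\sc Magma} verification tractable. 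Once the finite search is carried out, the conclusion $L(P+Q)\ge 3$ is immediate from the output. I would also remark that this proposition together with Lemma~\ref{L:empty_tetrahedra} and Theorems~\ref{T:two_empty_tetrahedra}, \ref{T:empty_5points} completes the empty-polytope case: whenever $P$ is an empty lattice polytope appearing in a maximal decomposition $P+Q$ with $L(P+Q)=2$ and $|Q|\ge 3$, we must have $|P|\le 5$, and the pair $(P,Q)$ is on the explicit list in Table~\ref{table}.
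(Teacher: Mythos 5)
Your overall template is right — cite Howe/Scarf for width one, get a normal form, then do a finite {\sc Magma} search — and it is the same strategy the paper uses. But there is a genuine gap exactly where you wave your hands: reducing the family of candidate polytopes $P$ to a \emph{finite} list. You claim that once the six lattice points are split $3+3$ between the planes $z=0$ and $z=1$ with the bottom normalized to the unit triangle, the possibilities for the top triple form ``a finite (small) list once one quotients by the stabilizer of the unit triangle.'' That is false: fixing the bottom floor to $\{0,e_1,e_2\}$, the top floor $\{(a,b,1),(c,d,1),(e,f,1)\}$ still has infinitely many $\AGL(3,\Z)$-equivalence classes, just as the empty tetrahedra $T_{a,b}$ do. The paper handles this by (i) invoking the Blanco--Santos classification (Tables~6 and~7 of~\cite{BlancoSantos1}), which shows the only relevant case is the parametric family $P=\left[\begin{smallmatrix} 0&1&0&0&a&c\\ 0&0&1&0&b&d\\ 0&0&0&1&1&1\end{smallmatrix}\right]$ with $a,b,c,d>0$, $c+d>a+b$, $ad-bc=\pm1$, and then (ii) applying Lemma~\ref{L:empty_tetrahedra} to the two empty subtetrahedra $[e_1,e_2,e_3,(a,b,1)]$ and $[e_1,e_2,e_3,(c,d,1)]$ to deduce $a+b\le 5$ and $c+d\le 5$. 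Step (ii) is what actually makes the search finite and it is entirely absent from your proposal; Lemma~\ref{L:widthbound} in your outline is being used only to bound the summand $Q$, not the parameters of $P$.

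A smaller but real error: your aside that ``empty polygons have at most $3$ lattice points unless they are segments'' is wrong — there are empty lattice quadrilaterals, pentagons, etc. The reason the floors here end up being a point, a primitive segment, or a unit triangle is the combination of $P$ being empty \emph{and} $L(P)=1$ (a $T_0$-floor would give $P$ an interior lattice point on its facet; a floor with a parallelogram gives $L\ge 2$), not emptiness alone. Since the paper simply reads off the normal form from \cite{BlancoSantos1}, it sidesteps this analysis entirely. To repair your argument, replace the ``floors'' heuristic with the cited classification, and add the Lemma~\ref{L:empty_tetrahedra}-on-subtetrahedra step to bound $a+b$ and $c+d$ before invoking {\tt AddTriangleHuh}.
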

\begin{proof} 
By Howe's Theorem~\cite{Scarf} any empty lattice 3-polytope is of width one. Size 6 lattice polytopes are classified in Tables 6 and 7 of~\cite{BlancoSantos1}.
All the polytopes in these two tables except for the last entry in Table 7 are either non-empty or have Minkowski length at least 2.
We therefore can assume that $P=\left[\begin{matrix} 0&1&0&0&a&c\\ 0&0&1&0&b&d\\ 0&0&0&1&1&1\end{matrix}\right]$, where $a,b,c,d> 0$, $c+d>a+b$, and $ad-bc=\pm 1$.

It follows from Lemma~\ref{L:empty_tetrahedra} applied to the subpolytopes $\left[\begin{matrix} 1&0&0&a\\ 0&1&0&b\\ 0&0&1&1\end{matrix}\right]$ and  $\left[\begin{matrix} 1&0&0&c\\ 0&1&0&d\\ 0&0&1&1\end{matrix}\right]$
that $a+b\leq 5$ and $c+d\leq 5$. For each such $P$  we run function {\tt AddTriangleHuh} and conclude that there is no $Q$ with $|Q|=3$ and $L(P+Q)=2$.
\end{proof}

\subsection{$P$ is equivalent to $T_0$} Now we consider the case when $P$ is equivalent to $T_0$, 
i.e., $P$ is a lattice triangle with three boundary lattice points and one interior lattice point. Note that the subgroup of symmetries of $T_0$ in ${\rm GL(2,\Z)}$ is isomorphic to the symmetric group $\mathbb{S}_3$ and is generated by 
$\begin{bmatrix}0&-1\\1&-1\end{bmatrix}$ and $\begin{bmatrix}0&1\\1&0\end{bmatrix}$. 

\begin{Prop}\label{L:T_0plusQ}  Let $Q\subset\R^3$ be a lattice triangle with  $|Q|=3$ such that  $L(T_0+Q)=2$. Then, after a lattice translation and  a unimodular transformation that maps $T_0$ to itself, $Q$ is equal to
$$\begin{bmatrix}0&1&0\\0&0&0\\0&0&1\end{bmatrix} \text{ or}\ 
\begin{bmatrix}0&0&a\\0&0&b\\0&1&-1\end{bmatrix}, \ {\rm where}\  a,b\in\Z\  {\rm satisfy}\  |a|,|b|\leq 3.
$$  
\end{Prop}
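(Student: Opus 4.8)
The strategy follows the template already established for empty tetrahedra in \rl{empty_tetrahedra} and applied again in \rp{empty_6points}: combine the width bound of \rl{widthbound} with a finite {\sc Magma} search. First I would fix a convenient representative of the equivalence class of $T_0$, namely $T_0=[e_1,e_2,-e_1-e_2]$ sitting in the plane $z=0$ of $\R^3$, so that $v=e_3$ is a primitive normal to the affine plane it spans. Since $T_0$ contains no unit triangle but does contain a primitive segment, part (2) of \rl{widthbound} applies directly: any primitive segment $I\subseteq Q$ has $\w_v(I)\leq 2$, i.e.\ the last coordinate of a direction vector of $I$ is bounded by $2$ in absolute value. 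This immediately confines the ``vertical'' extent of $Q$. For the ``horizontal'' extent, note that $Q$ lives in $\R^3$ but its projection to the $z=0$ plane is controlled by the planar analysis: a pair $(T_0, \pi(Q))$ with $L(T_0+\pi(Q))\le L(T_0+Q)=2$ must be among the finitely many planar configurations, and in fact the classification of planar polytopes with $L=1$ (segments with two lattice points, unit triangles, and $T_0$ itself) together with \cite[Lemma 1.7]{SoSo1} pins down how large $\pi(Q)$ can be.

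\textbf{Key steps.} (1) Reduce to the normal form for $T_0$ and invoke \rl{widthbound}(2) to bound the $z$-coordinate of every primitive segment in $Q$ by $\pm 2$. (2) Use \rl{widthbound}(1), applied to the unit triangle $[0,e_1,e_2]\subseteq T_0+$(something), or more directly the function {\tt GoodPolytope} on $T_0$, to obtain a finite bounding region $G$ for the direction vectors of primitive segments $I$ with $L(T_0+I)=2$; then {\tt FindSegments} enumerates the actual admissible directions. (3) Feed these segments into {\tt FindTriangles(}$T_0${\tt )} (equivalently {\tt AddTriangleHuh}), which runs over all lattice triangles $T$ assembled from admissible segments and tests $L(T_0+T)=2$; since $|Q|=3$, $Q$ must be a unit triangle, so the search is over a genuinely finite list. (4) Quotient the {\sc Magma} output by the $\mathbb S_3$-symmetry group of $T_0$ (generated by the two matrices displayed before the proposition) together with lattice translations, and observe the surviving classes are exactly the two families listed: $\begin{bmatrix}0&1&0\\0&0&0\\0&0&1\end{bmatrix}$ and $\begin{bmatrix}0&0&a\\0&0&b\\0&1&-1\end{bmatrix}$ with $|a|,|b|\le 3$. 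The bound $|a|,|b|\le 3$ should fall out of the {\tt GoodPolytope} inequalities for the non-coordinate facets of $T_0$, once one notes the segment $[0,\,ae_1+be_2-e_3]$ must be admissible.

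\textbf{Main obstacle.} The delicate point is step (4): making sure the raw {\sc Magma} list collapses to precisely these two normal forms and not more, which requires carefully accounting for the interplay between the individual lattice translations allowed on $Q$ and the shared unimodular map fixing $T_0$. In particular, one must check that a triangle of the first type (lying in a coordinate plane through a vertex of $T_0$) is genuinely inequivalent to any member of the second family (which is ``tilted'' with respect to the plane of $T_0$), and that the parametrization by $(a,b)$ with $|a|,|b|\le 3$ is exhaustive rather than merely sufficient. A secondary nuisance is verifying that the second family really does contain, for each such $(a,b)$, a configuration with $L(T_0+Q)=2$ and not accidentally $L\ge 3$ for some of them — this is exactly what {\tt AddTriangleHuh} certifies, so I would lean on the code here, noting that the combinatorial reductions above already guarantee finiteness so the computer check is legitimate. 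The width bound $\w_v(I)\le 2$ is doing the real work of turning an a priori infinite problem into the finite one the machine can handle.
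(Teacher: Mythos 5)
Your high-level strategy (width bound plus finite {\sc Magma} search) is the right template, and you correctly identify that \rl{widthbound}(2) constrains the $z$-coordinate of any primitive segment in $Q$ to $|z|\leq 2$. But both of your proposed routes to bounding the \emph{horizontal} extent of $Q$ have genuine gaps, and this is exactly where the paper's proof has to do real work.

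First, {\tt GoodPolytope}($T_0$) does not produce a bounded region. The function outputs the region cut out by the inequalities $|a_ix+b_iy+c_iz|\leq 14$ for each facet normal $v_i=(a_i,b_i,c_i)$, but $T_0$ is $2$-dimensional in $\R^3$ and has only one relevant normal, namely $e_3$; so the output is merely the slab $\{|z|\leq 14\}$ (or $\{|z|\leq 2\}$), which is infinite in the $x$- and $y$-directions. {\tt FindSegments}($T_0$) would therefore never terminate. This is not a technicality: in every other place where the paper runs {\tt GoodPolytope} the input is a full-dimensional polytope with several facets, and here it is not.

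Second, your fallback via the projection $\pi$ to the $z=0$ plane rests on the inequality $L(T_0+\pi(Q))\le L(T_0+Q)$, which is false in general. Minkowski length can increase under lattice projection; for example, the primitive segment from $(0,0,0)$ to $(2,0,1)$ has Minkowski length $1$, but its projection to the $z=0$ plane has lattice length $2$. So the planar classification cannot be invoked this way.

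The paper's actual proof closes both gaps with a case split you do not mention. If the three lattice points of $Q$ have pairwise distinct $z$-coordinates (necessarily $-1,0,1$ after translation, by the width bound), one observes that the chord of $Q$ in the plane $z=0$ joins the origin to $(a/2,b/2,0)$, so $T_0+Q$ contains the parallelogram $[0,e_2]+I$ of normalized area $|a|$; if $|a|\geq 12$ this parallelogram alone has $\geq 9$ lattice points and hence Minkowski length $\geq 3$, giving $|a|<12$ (and symmetrically $|b|<12$), after which {\sc Magma} refines to $|a|,|b|\leq 3$. If instead two lattice points of $Q$ share a $z$-coordinate, the paper invokes Lemma~1.8 of \cite{SoSo1} to pin the connecting vector down to $e_1$, $e_2$, or $e_1+e_2$, then normalizes and searches. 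Without some replacement for this explicit geometric bound on the horizontal directions, your proof does not reduce to a finite computation.
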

\begin{proof} By Lemma~\ref{L:widthbound} we have $\w_{e_3}(Q)\leq 2$, that is, lattice points in $Q$ can have difference at most 2  in their $z$-coordinates. Assume first that no two lattice points in $Q$ have the same $z$-coordinate, so after a  translation we can assume that the $z$-coordinates are $-1,0,1$. Then we can unimodularly map $Q$ to $ \begin{bmatrix}0&0&a\\0&0&b\\0&1&-1\end{bmatrix}$ while fixing the $(x,y)$-plane. The intersection of  $Q$ with the $(x,y)$-plane is the segment $I$ that connects the origin to $(a/2,b/2,0)$. Hence $T_0+Q$ contains $[0,e_2]+I$ and the area of this parallelogram is $|a|/2$. If $|a|\geq 12$ the area is at least 6, so the parallelogram has at least
 9 lattice points, which contradicts $L(T_0+Q)=2$. We conclude that $|a|<12$ and, similarly, we get the same bound on $|b|$. Using {\sc Magma}  we run through all such values of $a$ and $b$ and check whether $L(T_0+Q)=2$. We conclude that
 $|a|, |b|\leq 3$.
 
 Next we consider the case when at least two lattice points of $Q$ have the same $z$-coordinate. Then by Lemma 1.8 from~\cite{SoSo1} it follows that the vector connecting these two points is $e_1$, $e_2$, or $e_1+e_2$. 
 Using a symmetry of $T_0$ we can assume that it is $e_1$. Then, 
 after a lattice translation and a unimodular transformation that fixes the $(x,y)$-plane, $Q$ has the form
 $$\begin{bmatrix}0&1&a\\0&0&b\\0&0&c\end{bmatrix}\ {\rm where}\  a,b, c\in\Z\  {\rm satisfy}\ 0\leq a,b<c\leq 2.
 $$
Checking using {\sc Magma}  whether we get $L(T_0+Q)=2$ for such values of $a,b$, and $c$ we conclude that $a=b=0$ and $c=1$, and the conclusion follows. 
Note that we have shown that if $Q$ has two lattice points in the plane $z=0$ then the third lattice point in $Q$ must be in the plane $z=1$ or $z=-1$. \end{proof}

\begin{Th}\label{T:T_0+four}
Let $Q\subset\R^3$ be a lattice polytope with $|Q|\geq 4$  and $L(T_0+Q)=2$. Then $|Q|=4$ and, after a lattice translation and  a unimodular transformation that maps $T_0$ to itself, $Q$ is equal to
$$\begin{bmatrix}0&0&3\\0&0&0\\0&1&-1\end{bmatrix} \ {\rm or} \begin{bmatrix}0&1&0&1\\0&0&0&2\\0&0&1&-1\end{bmatrix}.$$
Note that in the first case $Q$ is equivalent to $T_0$ and in the second case $Q$ is equivalent to~$S_2$.
\end{Th}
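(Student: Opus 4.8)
The plan is to reduce the classification to a finite search and then run it, using the already-established list of admissible $3$-point triangles inside $Q$ as the starting data. First I would record the constraints forced on $Q$. Since $L(T_0)=1$ and Minkowski length is superadditive, $2=L(T_0+Q)\ge L(T_0)+L(Q)$, so $L(Q)=1$; by \re{eight} this gives $|Q|\le 8$, and moreover every lattice segment inside $Q$ is primitive, so no three lattice points of $Q$ are collinear. For every positive-dimensional $Q'\subseteq Q$ we then get $2=L(T_0)+L(Q')\le L(T_0+Q')\le L(T_0+Q)=2$, hence $L(T_0+Q')=2$. Applying part~(2) of \rl{widthbound} to each segment $I=[p,q]$ with $p,q\in Q\cap\Z^3$ (these are primitive and satisfy $L(T_0+I)=2$), any two lattice points of $Q$ differ by at most $2$ in the $e_3$-coordinate, so after a lattice translation $Q$ lies in the slab $0\le z\le 2$; combined with \rl{widthbound} this confines $Q$ to a bounded region once a triangle inside $Q$ is fixed.

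Next I would extract such a triangle. As $|Q|\ge 4$, $Q$ is not a segment (a segment with $|Q|$ lattice points has Minkowski length $|Q|-1\ge 3$), so $\dim Q\ge 2$ and $Q$ contains a lattice triangle $Q_0$ with $|Q_0|=3$: any three non-collinear lattice points of $Q$ span a triangle which, being a sub-polytope of $Q$, has Minkowski length $1$ and hence is either a unit triangle or a $T_0$-type triangle, and in the latter case one of the three unit triangles joining its interior point to two of its vertices does the job. Since $L(T_0+Q_0)=2$, Proposition~\ref{L:T_0plusQ} applies: after a lattice translation and a symmetry of $T_0$, $Q_0$ equals $\begin{bmatrix}0&1&0\\0&0&0\\0&0&1\end{bmatrix}$ or one of the finitely many $\begin{bmatrix}0&0&a\\0&0&b\\0&1&-1\end{bmatrix}$ with $|a|,|b|\le 3$.

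For each of these finitely many base triangles $Q_0$ I would enumerate, using \rl{widthbound} together with the functions \texttt{GoodPolytope} and \texttt{FindSegments}, all primitive segments meeting the width bound, and then carry out a finite {\sc Magma} search over all lattice polytopes $Q\supseteq Q_0$ inside the bounded admissible region, keeping only those with $L(T_0+Q)=2$ (and automatically $L(Q)=1$); in practice this is \texttt{FindTetra} for $|Q|=4$, followed by checking that no further lattice point can be added to the resulting $4$-point polytopes without raising the Minkowski length past~$2$. The output is exactly the two polytopes in the statement; in particular $|Q|=4$. The first, $\begin{bmatrix}0&0&3\\0&0&0\\0&1&-1\end{bmatrix}$, is a planar triangle with three boundary and one interior lattice point, hence $\AGL(3,\Z)$-equivalent to $T_0$; the second, $\begin{bmatrix}0&1&0&1\\0&0&0&2\\0&0&1&-1\end{bmatrix}$, is an empty tetrahedron of normalized volume~$2$, hence equivalent to $S_2$ from Table~\ref{table}.

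The main obstacle is making the finite search genuinely exhaustive and, especially, closing off the possibility $|Q|\ge 5$: one must know that every candidate $Q$ with $|Q|\ge 5$ actually contains a classified $3$-point triangle $Q_0$, and that with $Q_0$ fixed the remaining lattice points of $Q$ are confined to a bounded set, so that the {\sc Magma} enumeration does not miss any configuration. The slab estimate and \rl{widthbound} supply precisely this finiteness, but the combinatorial bookkeeping — which symmetries of $T_0$ to quotient by, which translates of $Q_0$ to consider — has to be organized carefully, which is why, as elsewhere in the paper, the case check is delegated to {\sc Magma}.
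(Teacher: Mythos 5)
Your overall reduction strategy — force $L(Q)=1$, use $|Q|\le 8$, bound the width, classify a $3$-point sub-triangle $Q_0$ via Proposition~\ref{L:T_0plusQ}, and enumerate extensions — is in the same spirit as the paper's argument. However, the key step where the paper does significantly better, and where your argument has a genuine gap, is in establishing $|Q|=4$. The paper reads off $|Q|=4$ directly from the structural observation recorded at the end of the proof of Proposition~\ref{L:T_0plusQ}: once two lattice points of $Q$ are normalized to lie in the plane $z=0$, that proof shows that any further lattice point must lie in $z=1$ or $z=-1$ and there can be at most one in each, so with $\w_{e_3}(Q)\le 2$ you get $|Q|=4$ immediately. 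Only then does the paper invoke {\sc Magma}, and at that point the search is over a tiny parameter space $|a|,|b|\le 3$ inherited from Proposition~\ref{L:T_0plusQ}. You instead propose to enumerate all $Q\supseteq Q_0$ inside a ``bounded admissible region'' and check that no lattice point can be added to the $4$-point outputs, deferring the $|Q|\ge 5$ exclusion to the computer.

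The problem is your justification of boundedness. You claim ``the slab estimate and \rl{widthbound} supply precisely this finiteness,'' but for the $2$-dimensional polytope $T_0$ Lemma~\ref{L:widthbound} only controls the width in the direction of the single normal $e_3$; it says nothing about the $x$- and $y$-coordinates. Likewise {\tt GoodPolytope}$(T_0)$ would be cut out by the single pair of inequalities $|z|\le 14$ (there is only one facet normal for a planar polytope sitting in $\R^3$), producing an unbounded slab, so {\tt FindSegments}$(T_0)$ would not terminate — which is exactly why the paper does \emph{not} use {\tt GoodPolytope}/{\tt FindSegments} for $T_0$, but instead an ad hoc area argument inside the proof of Proposition~\ref{L:T_0plusQ} (``if $|a|\ge 12$ the parallelogram has at least $9$ lattice points\dots''). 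The $x,y$-bounds you need actually come from that area argument (equivalently, from the $|a|,|b|\le 3$ restriction in the already-proved classification of $3$-point triangles), not from Lemma~\ref{L:widthbound}. As written, your enumeration has no a priori finite search space, so the {\sc Magma} check cannot be guaranteed exhaustive. To repair this you would either have to import the boundedness from Proposition~\ref{L:T_0plusQ} explicitly (applying it to each sub-triangle sharing an edge with $Q_0$), or, better, adopt the paper's shortcut and deduce $|Q|=4$ and the normalized form with $|a|,|b|\le 3$ directly from the note at the end of Proposition~\ref{L:T_0plusQ}'s proof.
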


\begin{proof} 
 By Lemma~\ref{L:widthbound} we have $\w_{e_3}(Q)\leq 2$, and since  $|Q|\geq 4$, we know that $Q$ must have two lattice points with the same $z$-coordinate, which after a translation can be assumed to be 0. In 
Proposition~\ref{L:T_0plusQ} we showed that in this case $Q$ may have at most two 
additional lattice points, one in the plane $z=1$, and another in $z=-1$, which implies that  $|Q|=4$. Further, by our work in Proposition~\ref{L:T_0plusQ}, there exists a unimodular transformation fixing the  $(x,y)$-plane which brings $Q$ to
the form $\begin{bmatrix}0&1&0&a\\0&0&0&b\\0&0&1&-1\end{bmatrix}$, where $a,b\in\Z$ satisfy $|a|,|b|\leq 3.$ 
Using {\sc Magma}, we find that up to a lattice translation and  a unimodular transformation that maps $T_0$ to itself there are only  two options for $Q$, 
$$\begin{bmatrix}0&0&3\\0&0&0\\0&1&-1\end{bmatrix} \ {\rm and} \ \begin{bmatrix}0&1&0&1\\0&0&0&2\\0&0&1&-1\end{bmatrix}.$$
\end{proof}

For polytopes of width one, \rt{T_0+four} provides the following.

\begin{Cor}\label{C:width-one}
 Let $P\subset\R^3$ be of lattice width one. Then any maximal decomposition in $P$ contains at most one
 summand with 4 or more lattice points.
\end{Cor}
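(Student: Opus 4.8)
The plan is to argue by contradiction, combining property~(5) of the Minkowski length with \rt{T_0+four} and the planar classification of \cite[Th 1.4]{SoSo1}. Suppose $P$ has lattice width one and admits a maximal decomposition $P_1+\dots+P_L\subseteq P$ containing two summands, say $Q_1$ and $Q_2$, each with at least four lattice points. Choosing one lattice point in each of the remaining summands (if any) shows that a lattice translate of $Q_1+Q_2$ is contained in $P$; since lattice width is translation-invariant and monotone under inclusion, $\w(Q_1+Q_2)\le\w(P)=1$. On the other hand, by property~(5) we have $L(Q_1)=L(Q_2)=1$ and $L(Q_1+Q_2)=2$. I want to show these are incompatible.

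First I would locate a lattice plane carrying one of the two summands. Because $\w(Q_1+Q_2)\le 1$ there is a primitive $v$ with $\w_v(Q_1)+\w_v(Q_2)=\w_v(Q_1+Q_2)\le 1$, and since both summands are positive-dimensional this forces $\w_v(Q_i)=0$ for at least one $i$; say $\w_v(Q_1)=0$, so $Q_1$ lies in a lattice hyperplane. Viewed inside that hyperplane, $Q_1$ is a planar lattice polytope with $L(Q_1)=1$ and at least four lattice points, hence---being neither a primitive segment nor a unit triangle---it is equivalent to $T_0$ by \cite[Th 1.4]{SoSo1}. Applying an element of $\AGL(3,\Z)$ (which changes neither $L(Q_1+Q_2)$ nor $\w(Q_1+Q_2)$) we may take $Q_1=T_0$, sitting in the coordinate plane $z=0$.

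Now I would feed this into \rt{T_0+four} with $Q=Q_2$: since $|Q_2|\ge 4$ and $L(T_0+Q_2)=2$, after a further lattice translation and a unimodular map fixing $T_0$---again preserving $\w(Q_1+Q_2)$---the polytope $Q_2$ becomes one of the two explicit polytopes listed there (equivalent respectively to $T_0$ and to $S_2$), and both of these have lattice width $2$ in the direction $e_3$. A short check shows that $\w_v(T_0)=0$ holds precisely for $v=\pm e_3$, while $\w_v(T_0)\ge 2$ for every other primitive $v$; therefore $\w_v(T_0+Q_2)=\w_v(T_0)+\w_v(Q_2)\ge 2$ for all $v$ (the term $\w_v(Q_2)$ handles $v=\pm e_3$, the term $\w_v(T_0)$ handles the rest). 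Thus $\w(Q_1+Q_2)\ge 2$, contradicting $\w(Q_1+Q_2)\le 1$, and the corollary follows.

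The only delicate point I anticipate is the last paragraph: one must make sure the translations and $T_0$-fixing unimodular transformations permitted in \rt{T_0+four} leave the lattice width of $T_0+Q_2$ unchanged, and verify that no primitive direction other than $\pm e_3$ gives $T_0$ width $\le 1$, so that the only way $\w_v(T_0+Q_2)$ could drop to $1$ is excluded by the $e_3$-width of $Q_2$. Everything else is routine manipulation of the elementary properties of lattice width and of maximal Minkowski decompositions.
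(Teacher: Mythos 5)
Your proof is correct and essentially mirrors the paper's argument: both locate a flat summand in a lattice hyperplane, identify it with $T_0$ via the planar classification, invoke Theorem~\ref{T:T_0+four}, and derive a width contradiction. The one refinement you add is computing $\w_v(T_0+Q_2)$ for \emph{every} primitive $v$ rather than tracking the privileged $z$-direction through the normalizing transformations; this makes the $\AGL(3,\Z)$-invariance of the conclusion fully explicit (and your verification that $\w_v(T_0)\ge 2$ whenever $v\neq\pm e_3$ is the right calculation), but it does not change the substance of the proof.
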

\begin{pf} After an affine unimodular transformation, we may assume that $P$ lies between the planes $z=0$ and $z=1$.
Suppose $P_1,P_2$ are two summands in a maximal decomposition in $P$ with $|P_1|\geq|P_2|\geq 4$.
Note that one of them, say $P_1$, must be 2-dimensional (otherwise $\w_{e_3}(P_1+P_2)>1$) and, hence, 
must be equivalent to $T_0$. Applying an affine unimodular transformation we may further assume that $P_1=T_0$.
But then, by \rt{T_0+four}, $P_2$ must be equivalent to a polytope whose width in the $z$-direction equals two, a contradiction.
\end{pf}

\subsection{$P$ has a subpolytope equivalent to $T_0$}\label{S:T0-facet}

We next consider the case when $P$ contains a proper subpolytope equivalent to $T_0$, hence, $|P|\geq 5$. As before, we first treat the case when $|P|=5$ and then show that for the case $|P|\geq 6$
there is no $Q$ with $|Q|\geq 3$ and $L(P+Q)=2$. In particular, this will imply that the subpolytope equivalent to $T_0$ must be a facet of $P$.

According to the classification of size 5 polytopes in~\cite{BlancoSantos1} the only such polytopes $P$ with
$|P|=5$ are the ones equivalent to
$$T_1=\left[\begin{matrix} 1&0&-1&0\\ 0&1&-1&0\\ 0&0&0&1\end{matrix}\right]\ \ {\rm or}\  \  T_2=\left[\begin{matrix} 1&0&-1&2\\ 0&1&-1&1\\ 0&0&0&3\end{matrix}\right].
$$  

\begin{Prop}\label{P:T_1}
Let $P=T_1$  and  let $Q$ be a lattice polytope such that $|Q|\geq 3$ and $L(Q)=1$. Suppose that  $L(P+Q)=2$. Then $|Q|=3$ and, up to a lattice translation, 
$$Q=\begin{bmatrix}0&1&0\\0&0&0\\0&0&1\end{bmatrix}, \begin{bmatrix}0&0&0\\0&1&0\\0&0&1\end{bmatrix},\  {\rm or}\ \begin{bmatrix}0&-1&0\\0&-1&0\\0&0&1\end{bmatrix}.
$$
\end{Prop}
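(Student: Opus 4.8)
The strategy mirrors the arguments already used for $T_0$ in Propositions~\ref{L:T_0plusQ} and~\ref{T:T_0+four}: reduce to finitely many candidate directions for the primitive segments of $Q$ using the width bound from \rl{widthbound}, then dispose of the remaining cases with {\sc Magma}. First I would note that $T_1$ contains a subpolytope equivalent to $T_0$ (for instance the triangle $[e_1,e_2,0]$ together with its interior lattice point --- more precisely, one checks $T_1$ has a facet equivalent to $T_0$), so any primitive segment $I\subseteq Q$ must satisfy $L(T_0+I)=2$, and hence by Proposition~\ref{L:T_0plusQ} (applied to that subpolytope, after fixing an appropriate unimodular frame) the direction vectors of the primitive segments in $Q$ are severely constrained. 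This restricts $Q$ to a finite, explicit list of lattice polytopes.

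\textbf{Key steps.} The steps in order: (1) identify a $T_0$-subpolytope of $T_1$ and the primitive segment directions compatible with it via Proposition~\ref{L:T_0plusQ}; (2) since $T_1$ is $3$-dimensional of lattice width one (it lies between $z=0$ and $z=1$, with the $T_0$-facet in the plane $z=0$), invoke \rc{width-one} to conclude immediately that $Q$ cannot have $4$ or more lattice points, so $|Q|\le 3$ and in fact $|Q|=3$; (3) enumerate, using the bounds from step (1) together with the function {\tt FindTriangles} (or {\tt AddTriangleHuh}) applied to $P=T_1$, all lattice triangles $Q$ with $|Q|=3$, $L(Q)=1$ and $L(T_1+Q)=2$; (4) sort the {\sc Magma} output into $\AGL(3,\Z)$-orbits (using the symmetries of $T_1$) and record representatives, which yields exactly the three triangles listed in the statement. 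Step (2) is what makes the $|Q|=3$ conclusion painless here, in contrast to the $T_0$ case where more care was needed.

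\textbf{Main obstacle.} The genuine work is in step (3)--(4): one must be sure the {\sc Magma} search is exhaustive over the relevant finite set. The finiteness itself is guaranteed by \rl{widthbound} (each facet normal $v_i$ of $T_1$ pairs with the direction vector $u$ of any primitive segment in $Q$ to give $|\langle u,v_i\rangle|\le 14$, so the function {\tt GoodPolytope}$(T_1)$ produces a bounded region $G$ and {\tt FindSegments}$(T_1)$ lists all admissible $u$); the only subtlety is correctly reducing the resulting triangles modulo the symmetry group of $T_1$ inside $\AGL(3,\Z)$ so that the three polytopes displayed are genuinely inequivalent and account for every case. I expect this to be routine given the machinery already developed, but it is the step where an error could hide, so it warrants an explicit check that the three listed $Q$'s are pairwise inequivalent and that each indeed satisfies $L(T_1+Q)=2$.
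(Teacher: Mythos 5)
Your step (2) contains a genuine gap. Corollary~\ref{C:width-one} says: if the \emph{ambient} polytope has lattice width one, then any maximal decomposition \emph{inside it} has at most one summand with four or more lattice points. In the present situation $T_1$ is a \emph{summand}, not the ambient polytope; the relevant ambient object is $T_1+Q$ (or whichever $P$ is hypothesized to contain it), and nothing forces $T_1+Q$ to have lattice width one. Indeed, $T_1$ is sandwiched between $z=0$ and $z=1$, but adding $Q$ (which for the three triangles in the statement has $z$-coordinates $0,0,1$) pushes the $z$-range of $T_1+Q$ to $[0,2]$, so the $e_3$-direction already fails, and there is no reason to expect width one in any other direction. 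Consequently, you cannot ``conclude immediately'' that $|Q|<4$; the case $|Q|\ge 4$ is genuinely open at this point in your argument and is precisely what the paper's call to {\tt AddTetraHuh}$(T_1)$ is for.

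There is a repair closer in spirit to your plan. Since $T_0\subseteq T_1$ (the facet $[e_1,e_2,-e_1-e_2]$ with center the origin), any $Q$ with $L(T_1+Q)=2$ also satisfies $L(T_0+Q)=2$. If $|Q|\ge 4$, Theorem~\ref{T:T_0+four} then pins $Q$ down, up to a translation and a $T_0$-preserving unimodular map, to exactly two tetrahedra, and one can check directly that neither gives $L(T_1+Q)=2$. This is a legitimate route (and is essentially what you should have written in place of invoking Corollary~\ref{C:width-one}), but it still requires an explicit case check on those two candidates; your write-up skips it. The paper's own proof is simply the computer search: run {\tt AddTriangleHuh} and {\tt AddTetraHuh} on $T_1$; the latter rules out $|Q|\ge 4$ and the former returns the three triangles. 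Your steps (1), (3), (4) are consistent with that, but step (2) as stated is incorrect and needs to be replaced.
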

\begin{proof} This is obtained by running  the functions {\tt AddTriangleHuh} and {\tt AddTetraHuh} for~$T_1$.
\end{proof}

\begin{Rem}\label{R:invariant} 
It is convenient to give an invariant description of the triangles $Q$ in the statement of \rp{T_1}. Let $P$ be a polytope equivalent to $T_1$. Denote the vertices of $P$ by
$v_1,\dots,v_4$ such that $[v_1,v_2,v_3]$ is the facet equivalent to $T_0$, and let $v_0$ be the center of this facet. Then  \rp{T_1} claims that if $Q$ is a polytope with $|Q|\geq 3$, $L(Q)=1$, and $L(P+Q)=2$ then 
$Q$ equals $[v_0,v_1,v_4]$, $[v_0,v_2,v_4]$, or $[v_0,v_3,v_4]$. 
\end{Rem}

\begin{Prop}\label{P:T_2}
Let $P=T_2$  and  let $Q$ be a lattice polytope such that $|Q|\geq 3$ and $L(Q)=1$. Then  $L(P+Q)\geq 3$. 
\end{Prop}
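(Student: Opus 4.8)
The plan is to follow the same computational strategy used for $T_1$ in \rp{T_1}, but now observe that the richer structure of $T_2$ forces the Minkowski length to jump to at least $3$. First I would recall that $T_2$ contains the facet $[v_1,v_2,v_3]$ equivalent to $T_0$ (the columns with $z=0$), together with the apex $v_4$ at height $3$. The key point is that $T_2$ already contains a segment in the $z$-direction of lattice length large enough, and more importantly several primitive segments and unit triangles, so the constraints from \rl{widthbound} leave only finitely many candidate directions for any primitive segment $I$ with $L(P+I)=2$; these are enumerated by {\tt FindSegments}.

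The main steps, in order: (1) apply \rl{widthbound} to the unit-triangle subpolytopes of $T_2$ (for instance $[v_1,v_2,v_3]\supset$ a unit triangle) to bound the width of any admissible primitive segment $I$ in the direction normal to each facet of $T_2$; this reduces the search for such $I$ to the lattice points of the bounded polytope {\tt GoodPolytope}$(T_2)$. (2) Run {\tt FindSegments} on $T_2$ to produce the finite list of primitive segments $I$ with $L(T_2+I)=2$. (3) Run {\tt AddTriangleHuh} and {\tt AddTetraHuh} on $T_2$: these check whether any lattice triangle $T$ or (possibly degenerate) tetrahedron built from the admissible segments satisfies $L(T_2+T)=2$. (4) Since $Q$ with $|Q|\ge 3$ and $L(Q)=1$ is (up to equivalence and lattice translation) either a lattice triangle or a tetrahedron spanned by such admissible segments, if both {\tt AddTriangleHuh} and {\tt AddTetraHuh} return false, then no such $Q$ exists, i.e.\ every $Q$ of positive dimension with $L(T_2+Q)\le 2$ fails unless it is itself a segment — but then $L(T_2+Q)=3$ because $T_2$ already realizes a maximal decomposition of length $2$ internally (one can exhibit $[v_1,v_2,v_3]$ decomposing, after translation, into two segments, plus the $z$-segment $[0,3e_3]\cap\Z^3$ has lattice length $3$). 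Concretely, it suffices to present a single decomposition $P_1+P_2+P_3\subseteq T_2$ with each $P_i$ of positive dimension to conclude $L(T_2)\ge 3$ on its own — but note that is false since $L(P)=1$ for $T_2$; instead the correct conclusion is simply that no $Q$ with $|Q|\ge3$ admits $L(T_2+Q)=2$, which is exactly what the {\sc Magma} search certifies.

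The heart of the argument is therefore the finiteness reduction plus a finite {\sc Magma} verification, paralleling \rp{T_1}; the statement to prove is the negative one, that $L(P+Q)\ge 3$ for every such $Q$, equivalently that the output of {\tt AddTriangleHuh}$(T_2)$ and {\tt AddTetraHuh}$(T_2)$ is empty. I expect the main obstacle to be purely bookkeeping: ensuring the bound from \rl{widthbound} is applied to enough facet-normals of $T_2$ (it has four facets, with normals one can compute explicitly from the vertex matrix) so that {\tt GoodPolytope}$(T_2)$ is genuinely finite, and then trusting the computer search over the resulting (small) box of primitive directions. No new ideas beyond those already deployed for the empty-tetrahedron and $T_1$ cases are needed; the proof is one line invoking the functions.

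\begin{proof}
By \rl{widthbound} applied to the unit triangles contained in the facet of $P=T_2$ equivalent to $T_0$, together with the corresponding bounds at the remaining facets of $T_2$, any primitive segment $I$ with $L(P+I)=2$ has bounded width against every facet normal of $P$; hence such $I$ lie among the finitely many primitive lattice points of {\tt GoodPolytope}$(T_2)$. Running {\tt FindSegments}, {\tt AddTriangleHuh}, and {\tt AddTetraHuh} for $T_2$, we find that no lattice polytope $Q$ with $|Q|\geq 3$ and $L(Q)=1$ satisfies $L(P+Q)=2$. Therefore $L(P+Q)\geq 3$ for every such $Q$.
\end{proof}
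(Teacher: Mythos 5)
Your final proof block is essentially the paper's argument, and it is correct: the paper simply runs \texttt{AddTriangleHuh} on $T_2$, gets an empty output, and concludes. Two small remarks. First, invoking \texttt{AddTetraHuh} is redundant: any $Q$ with $|Q|\ge 3$ and $L(Q)=1$ is at least $2$-dimensional (a $1$-dimensional polytope with $L=1$ has only two lattice points), hence contains a unit triangle $T$; if $L(T_2+Q)=2$ then $L(T_2+T)=2$ as well, so an empty output from \texttt{AddTriangleHuh} alone already kills every candidate $Q$, including tetrahedra — this is why the paper does not call \texttt{AddTetraHuh}. Second, the middle of your discussion (before the proof block) contains a confused detour, where you briefly suggest exhibiting a length-$3$ decomposition inside $T_2$ to conclude something; you correctly note yourself that this is wrong (indeed $L(T_2)=1$), and your final proof block does not rely on it, so there is no actual gap — but that paragraph should be deleted rather than left as a self-corrected aside.
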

\begin{proof} 
The function {\tt AddTriangleHuh} for $T_2$ gives an empty output and the conclusion follows. 
\end{proof}

\begin{Prop}\label{P:T_0_or_T_1}
Let $P$ be a lattice polytope that contains a proper subpolytope equivalent to $T_0$
and  let $Q$ be a lattice polytope such that $|Q|\geq 3$. Suppose that $L(P)=L(Q)=1$ and $L(P+Q)=2$.
Then  $P$ is equivalent to $T_1$ and, in particular, $|P|=5$.
\end{Prop}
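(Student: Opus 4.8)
\emph{Plan of proof.} First I would nail down the coarse invariants: since $P$ properly contains a copy of $T_0$, which has four lattice points, $|P|\ge 5$, and $L(P)=1$ forces $|P|\le 8$ by \re{eight}. After an affine unimodular map I may assume that the distinguished subpolytope is the standard copy of $T_0$ lying in the plane $z=0$. Because the planar lattice polytopes with $L=1$ are only segments, unit triangles and $T_0$ itself \cite{SoSo1}, no planar polytope with $L=1$ can contain $T_0$ properly, so $P\cap\{z=0\}=T_0$, every other lattice point of $P$ has nonzero $z$-coordinate, and $P$ is $3$-dimensional. Finally, from $T_0\subseteq P$, $L(T_0)=L(Q)=1$, superadditivity, and monotonicity of the Minkowski length under inclusion, $2\le L(T_0+Q)\le L(P+Q)=2$, so $L(T_0+Q)=2$; then \rt{T_0+four} together with Proposition~\ref{L:T_0plusQ} pin down $Q$: $|Q|\le 4$, and up to translation and a symmetry of $T_0$, $Q$ is one of the finitely many explicit polytopes listed there.

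If $|P|=5$ the argument is short: the classification of lattice $3$-polytopes with exactly five lattice points \cite{BlancoSantos1} gives $P$ equivalent to $T_1$ or $T_2$, and \rp{T_2} shows $L(T_2+Q)\ge 3$ for every admissible $Q$, contradicting $L(P+Q)=2$; hence $P\cong T_1$, and then the $T_0$-subpolytope is a facet of $P$. So the remaining task is to rule out $|P|\ge 6$. The key observation is that, when $|z(w)|=1$ for an off-plane lattice point $w\in P$, the tetrahedron $P'=\conv(T_0\cup\{w\})$ has \emph{exactly} five lattice points: its interior and its facet interiors would need a lattice point with $z$ strictly between $0$ and $1$, and its edges $[v_i,w]$ are primitive because $L(P)=1$. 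Thus, whenever such a $w$ exists, $P'$ has $|P'|=5$, $L(P')=1$, $L(P'+Q)=2$ (sandwiched as above), so by the previous paragraph $P'\cong T_1$ with $T_0$ as its $T_0$-facet. Proposition~\ref{P:T_1} (see also \rr{invariant}) then forces $|Q|=3$ and $Q$ to be one of the three triangles $[v_0,v_i,w]$, $i\in\{1,2,3\}$, where $v_0$ is the center and $v_1,v_2,v_3$ the vertices of $T_0$.

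Now if $|P|\ge 6$ there is a second off-plane lattice point $w'\ne w$. If its cone $\conv(T_0\cup\{w'\})$ also has five lattice points, the same reasoning forces $Q$ to be one of the triangles $[v_0,v_j,w']$ as well; but such a triangle has $w'$ as its unique off-plane vertex, while the triangles $[v_0,v_i,w]$ have $w\ne w'$, a contradiction. The only case not yet excluded is that for every off-plane lattice point of $P$ the cone with $T_0$ has at least six lattice points. I would handle this (and, defensively, the bookkeeping of the previous paragraph) by finiteness: using $P\supseteq T_0$, $L(P)=1$, $|P|\le 8$ and the classification of lattice $3$-polytopes of Minkowski length one \cite{Josh,BlancoSantos3,BlancoSantos1}, the polytope $P$ ranges over an explicit finite list, $Q$ over the explicit finite list from \rt{T_0+four} and Proposition~\ref{L:T_0plusQ}, and a direct computation with the functions \texttt{AddTriangleHuh} and \texttt{AddTetraHuh} (analogous to \rp{T_1} and \rp{T_2}) confirms that none of the polytopes $P$ with $|P|\ge 6$ admits a $Q$ with $|Q|\ge 3$, $L(Q)=1$, and $L(P+Q)=2$. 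Combined with the $|P|=5$ case this gives $P\cong T_1$.

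I expect the last paragraph to be the main obstacle: the clean part (reducing $Q$ to a short explicit list, eliminating $T_2$, and the $|P'|\cong T_1$ argument) is routine once \rt{T_0+four}, Proposition~\ref{L:T_0plusQ}, and \rp{T_2} are available, but the case where coning $T_0$ with every off-plane lattice point produces extra lattice points has to be dispatched by enumerating the relevant finite family of size-$\ge 6$ polytopes of Minkowski length one containing $T_0$ and checking each by computer, which is where the combinatorial care concentrates.
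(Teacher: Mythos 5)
You've reproduced the key observation from the paper's proof (the reduction to $T_1$ via Remark~\ref{R:invariant}), but you've complicated it and introduced an unnecessary gap in the process. The paper's argument for $|P|\ge 6$ is cleaner: take \emph{any} two lattice points $v_4,v_5$ of $P$ beyond the four in $T_0$; then $[v_1,v_2,v_3,v_4]$ and $[v_1,v_2,v_3,v_5]$ are each proper subpolytopes of $P$ with five lattice points containing $T_0$ (they cannot have extra lattice points, since those would also lie in $P$ and would make the polytope equivalent to $T_2$, which \rp{T_2} rules out). Sandwiching as you did gives $L([v_1,v_2,v_3,v_i]+Q)=2$, and then Remark~\ref{R:invariant} forces $Q\in\{[v_0,v_1,v_4],[v_0,v_2,v_4],[v_0,v_3,v_4]\}$ and simultaneously $Q\in\{[v_0,v_1,v_5],[v_0,v_2,v_5],[v_0,v_3,v_5]\}$, which is impossible. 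There is no residual ``every off-plane cone has $\ge 6$ lattice points'' case to dispatch by computer.

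Your key observation about tetrahedra with $z$-coordinate $\pm1$ having exactly $5$ lattice points is essentially correct, but your worry that there might be no such off-plane point with $|z|=1$ is unfounded, and so the last-resort computer enumeration is an unnecessary detour; moreover, you never actually close that case with a concrete argument, so as written the proposal has a genuine gap there. The fix is simply to drop the $z=\pm1$ normalization entirely: Proposition~\ref{P:T_0_or_T_1} needs only that for each extra lattice point $v_j$ the 5-point polytope $[v_1,v_2,v_3,v_j]$ is equivalent to $T_1$ (granted by the $|P|=5$ case plus \rp{T_2}), and then Remark~\ref{R:invariant} does the rest. Your $|P|=5$ reduction and your sandwiching of $L(T_0+Q)$ are exactly what the paper does.
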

\begin{proof} 
If $|P|=5$ then by the classification in~\cite{BlancoSantos1} $P$ is equivalent to $T_1$ or $T_2$, where the latter case is ruled out by \rp{T_2}.
Suppose  $|P|\geq 6$.
Let $[v_1,v_2,v_3]$ be the subpolytope equivalent to $T_0$ and let $v_0$ be its center. Let $v_4$ and $v_5$ be two other lattice points in $P$.
Then both $[v_1,v_2,v_3,v_4]$ and $[v_1,v_2,v_3,v_5]$ must be equivalent to $T_1$. By
\rr{invariant}, this implies that $Q\in \{[v_0,v_1,v_4], [v_0,v_2,v_4], [v_0,v_3,v_4]\}$
and, at the same time, $Q\in\{[v_0,v_1,v_5], [v_0,v_2,v_5], [v_0,v_3,v_5]\}$, which is impossible.
\end{proof}

\subsection{$P$ is clean, but not empty}

We next work on the remaining case when $P$ is clean, but contains at least one interior lattice point. Recall that {\it clean} means that the only boundary lattice points of $P$ are its vertices. Since we have already considered the case when $T_0$ is a subpolytope of $P$ we can further
assume that no four points in $P$ are coplanar. 

First, assume that $P$ is a clean lattice tetrahedron with one interior lattice point, i.e. a {\it Fano tetrahedron}. In the next proposition we use Kasprzyk's classification of Fano tetrahedra, see~\cite[Section 2]{Kasp}.

\begin{Prop}\label{P:Fano}
Let $P\subset\R^3$ be a Fano tetrahedron and let $Q\subset\R^3$ be a lattice polytope such that $L(Q)=1$ and $L(P+Q)=2$.  If $|Q|\geq 3$ then  $P$ is equivalent to either $K_1$ or $K_2$, as defined in Table~\ref{table}.
\end{Prop}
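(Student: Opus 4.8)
\textbf{Proof proposal for \rp{Fano}.}

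The plan is to follow the same computer-assisted strategy used throughout this section: first invoke a finite classification to reduce to a manageable list of candidate polytopes $P$, then bound the size of any admissible summand $Q$ using \rl{widthbound}, and finally run the {\sc Magma} functions to discard all but $K_1$ and $K_2$. First I would recall Kasprzyk's classification \cite{Kasp}: up to equivalence there are only finitely many Fano tetrahedra, and since we need $L(P)=1$ we may restrict to those among them whose Minkowski length equals one (the others satisfy $L(P)\geq 2$ by definition and are immediately excluded, using the function {\tt minkone} if necessary). This leaves an explicit finite list of candidates for $P$. For each such $P$, every primitive segment $I\subseteq Q$ with $L(P+I)=2$ has a direction vector $u$ whose pairing with each facet normal $v_i$ of $P$ is bounded by $14$ in absolute value, by \rl{widthbound} — note $P$ contains a unit $3$-simplex (any Fano tetrahedron has a unit corner at a vertex adjacent to the interior point, or more simply $P$ is $3$-dimensional so contains some unit $3$-simplex), so part (1) of that lemma applies to the relevant $2$-dimensional subtriangles. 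This confines $u$ to the finite lattice polytope $G$ produced by {\tt GoodPolytope(P)}.

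Next I would run {\tt FindSegments(P)} to enumerate all primitive segments $I$ with $L(P+I)=2$, and then {\tt AddTriangleHuh(P)} to test whether any lattice triangle $T$ built from these segments satisfies $L(P+T)=2$. Since $L(Q)=1$ and $|Q|\geq 3$, $Q$ is either a primitive segment (ruled out by $|Q|\geq 3$) or a lattice polytope of positive area; if $Q$ is $2$-dimensional it is one of the three planar dps-polytopes (a unit triangle or $T_0$), and if $Q$ is $3$-dimensional it has at most $8$ lattice points by \re{eight} and its edges are among the segments found by {\tt FindSegments}. In either case {\tt AddTriangleHuh} and {\tt AddTetraHuh} exhaust the possibilities, and I would report that the output is non-empty exactly for the two Fano tetrahedra equivalent to $K_1$ and $K_2$, and empty for all other candidates on Kasprzyk's list. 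This proves that if such a $Q$ exists then $P\sim K_1$ or $P\sim K_2$.

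The main obstacle I anticipate is purely a bookkeeping one: ensuring the bound from \rl{widthbound} is actually applicable, i.e. that each facet (or at least enough subpolytopes) of a Fano tetrahedron $P$ with $L(P)=1$ either contains a unit triangle or is equivalent to $T_0$, so that one of the two cases of \rl{widthbound} gives the width-$14$ (resp. width-$2$) control on segment directions. For Fano tetrahedra this is clear — the facets are lattice triangles with few lattice points, and one checks directly from Kasprzyk's list that each facet is a unit triangle or a $T_0$-triangle — but it must be stated explicitly so that the finite search in {\sc Magma} is provably exhaustive rather than merely heuristic. Once that reduction is in place, the remainder is a routine finite verification carried out by the {\tt minkowski-length} package.
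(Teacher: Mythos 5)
Your proposal is correct and takes essentially the same route as the paper: the paper's own proof likewise invokes Kasprzyk's classification to reduce to eight Fano tetrahedra and then runs {\tt AddTriangleHuh} on each (which internally uses \rl{widthbound} via {\tt GoodPolytope} and {\tt FindSegments}), returning a positive answer only for $K_1$ and $K_2$. Your extra remarks --- that $L(P)=1$ is forced by superadditivity, and that the facets of a Fano tetrahedron are clean (hence unit triangles) so part (1) of \rl{widthbound} applies --- are accurate and merely spell out what is implicit in the paper's one-line proof.
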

\begin{proof} 
By the classification in~\cite{Kasp}, up to lattice equivalence, there are eight Fano tetrahedra. For each such $P$, we run function  {\tt AddTriangleHuh} and it returns a negative answer for all of them except for 
$K_1$ and $K_2$. \end{proof}

\begin{Prop}\label{P:Fano-K1-K2}
Let $P=[v_1,\dots,v_4]\subset\R^3$ be a Fano tetrahedron and $v_0$ its interior lattice point.
Let $Q\subset\R^3$ be a lattice polytope with  $|Q|=4$, $L(Q)=1$ and $L(P+Q)=2$. 
\begin{enumerate}
\item If $P$ is equivalent to $K_1$ then $Q$ equals $[v_0,v_1,v_2,v_3]$, $[v_0,v_1,v_2,v_4]$,
$[v_0,v_1,v_3,v_4]$, or $[v_0,v_2,v_3,v_4]$. Moreover, $(P,Q)$ is equivalent to $(K_1,S_1)$.
\item If  $P=K_2$ then 
$Q=\begin{bmatrix}0&0&1&1\\0&1&0&1\\0&0&0&2\end{bmatrix}$, which is equivalent to $S_2$.
\end{enumerate}
\end{Prop}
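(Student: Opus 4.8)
The plan is to proceed exactly as in the proof of \rp{Fano} and \rp{Fano-K1-K2}(1), namely by reducing to a finite computer-verifiable search. Since we are told $P=K_2$ and $Q$ is a lattice polytope with $|Q|=4$, $L(Q)=1$, and $L(P+Q)=2$, the first step is to bound the lattice width of $Q$ in several directions so that only finitely many candidate polytopes $Q$ remain. Concretely, since $L(Q)=1$, the polytope $Q$ is a dps-polytope, and since $|Q|=4$ it is either an empty tetrahedron, a unit triangle, or one of the small width-one polytopes; but rather than casing on this, I would invoke \rl{widthbound} in the following way: because $K_2$ contains a unit triangle (e.g. the triangle on the facet $[e_1,e_2,-e_1-e_2-e_3]$, or any unimodular sub-triangle), for any primitive segment $I\subseteq Q$ and any primitive normal $v$ to that triangle's affine span we get $\w_v(I)\le 14$. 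Applying this to the facet normals $n_1,\dots,n_4$ of $K_2$ (using that these normals, together with the relations they satisfy, span a finite-index sublattice of $(\Z^3)^\vee$, exactly as in the proof of \rl{empty_tetrahedra}) bounds all coordinates of every primitive direction vector occurring in $Q$. This is precisely what the function {\tt GoodPolytope($K_2$)} computes, after which {\tt FindTetra($K_2$)} enumerates all $Q$ with $|Q|=4$ satisfying $L(K_2+Q)=2$.

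The second step is the {\sc Magma} computation itself: run {\tt FindTetra($K_2$)}, then among the resulting list retain only those $Q$ with $L(Q)=1$ (using {\tt minkone}) and $|Q|=4$. The claim is that, up to a lattice translation and a unimodular transformation fixing $K_2$, the only output is $\begin{bmatrix}0&0&1&1\\0&1&0&1\\0&0&0&2\end{bmatrix}$. The final bookkeeping step is to verify that this polytope is indeed equivalent to $S_2$: it is the convex hull of $0$, $e_2$, $e_1$, and $e_1+e_2+2e_3$, which after translating by $-e_3$ becomes $[-e_3,\,e_2-e_3,\,e_1-e_3,\,e_1+e_2+e_3]$; one then exhibits an explicit $\varphi\in\AGL(3,\Z)$ sending this to $[e_1,e_2,e_3,e_1+e_2+e_3]=S_2$ (such a map exists because both are empty tetrahedra of normalized volume $2$, and by White's theorem any two such are equivalent). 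In fact the equivalence with $S_2$ already follows formally from $\Vol_3(Q)=2$ and emptiness, so this step is essentially automatic.

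I expect the main obstacle to be entirely computational rather than conceptual: ensuring that the bound coming from \rl{widthbound} applied to the normals of $K_2$ really does cut the search down to a manageable finite set. The normals of $K_2$ are less symmetric than those of $T_{a,b}$, so one must check that the lattice they generate has small enough index that the resulting box {\tt GoodPolytope($K_2$)} is small; since $\Vol_3(K_2)=5$ and its normal fan is a fixed finite object, the analogue of the relations \re{relations} gives $(\det)\cdot e_j$ as an explicit integer combination of the $n_i$, so every coordinate of a primitive segment direction is bounded by roughly $14$ times a small constant, and the search space is finite and not large. Once that box is in hand, the rest is a direct {\sc Magma} enumeration of the same flavor as in \rp{twoS_2} and \rp{Fano-K1-K2}(1), and the identification with $S_2$ is immediate from the volume and emptiness.
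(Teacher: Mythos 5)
Your proposal addresses only part (2); part (1), including the ``Moreover, $(P,Q)$ is equivalent to $(K_1,S_1)$'' clause, is not proved. For part (2), you take essentially the same route as the paper: bound the search space via \rl{widthbound} applied through the facet normals of $K_2$, enumerate candidate tetrahedra via {\sc Magma}, and then identify the single output as equivalent to $S_2$. Your identification step is a mild variation: the paper exhibits an explicit affine unimodular map to $S_2$, whereas you observe that the output is an empty tetrahedron of normalized volume $2$ and invoke White's theorem to conclude it is equivalent to $T_{1,1}=S_2$; this is valid and arguably cleaner since it avoids writing down a matrix. (Using {\tt FindTetra} rather than {\tt AddTetraHuh} is also appropriate --- per the package descriptions, the former actually lists the tetrahedra, which is what is needed here.)

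The genuine gap is part (1). The computational enumeration for $K_1$ is analogous to what you did for $K_2$, but the ``Moreover'' clause needs an additional argument that your plan does not supply: the four candidate tetrahedra $[v_0,v_i,v_j,v_k]$ must all yield pairs $(P,Q)$ that are equivalent \emph{as pairs} to $(K_1,S_1)$, i.e.\ under a single $\AGL(3,\Z)$ map applied to both $P$ and $Q$ simultaneously. The paper handles this by noting that there is a unimodular map preserving $K_1$ which carries the convex hull of the origin and any three vertices of $K_1$ to $S_1$. Knowing that each candidate $Q$ is individually equivalent to $S_1$ (which is immediate, since they are unit $3$-simplices) is not by itself enough to conclude pair-equivalence. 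You should add both the $K_1$ enumeration and this symmetry observation to complete the argument.
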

\begin{proof} 
We  run the function {\tt AddTetraHuh} for $K_1$ and $K_2$ and obtain the claimed output. The tetrahedron $Q$ that we obtain in (2) is equivalent to $S_2$:
 $$\begin{bmatrix}-1&0&0\\0&-1&0\\-1&-1&1\end{bmatrix}\cdot\begin{bmatrix}0&0&1&1\\0&1&0&1\\0&0&0&2\end{bmatrix}+\begin{bmatrix}1\\1\\1\end{bmatrix}=\begin{bmatrix}1&1&0&0\\1&0&1&0\\1&0&0&1\end{bmatrix}=S_2.
 $$ 

For (1) we also note that there exists a unimodular map preserving $K_1$ which  maps the convex hull of the origin and any three vertices of $K_1$ to  $S_1$.
\end{proof}

We next  work on the case when $P$ has more than one lattice point in the interior. We can then use the following statement to reduce to the case of a Fano tetrahedron.

\begin{Lemma}\label{L:FanoLem}
Let $P$ be a lattice polytope in $\mathbb{R}^3$ such that no four lattice points in $P$ are coplanar.  If $P$ has an interior lattice point $p$, then $P$ contains a subpolytope which is a Fano tetrahedron with interior lattice point  $p$.
\end{Lemma}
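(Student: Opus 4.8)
The plan is to produce the Fano tetrahedron by a greedy shrinking argument: start with $p$ in the interior of $P$, pass to a full-dimensional lattice subpolytope $P'\subseteq P$ that still has $p$ in its interior and is minimal with this property among lattice subpolytopes of $P$, and then argue that $P'$ must be a tetrahedron with $p$ its only interior point. Since $p$ is interior to $P$, we can certainly find four affinely independent lattice points $w_1,\dots,w_4$ of $P$ whose convex hull contains $p$ in its interior (e.g. intersect $P$ with a small simplex around $p$ and use Carath\'eodory on $p$ inside the $3$-polytope $P$ after perturbing, then clear denominators); so the family of lattice $3$-subpolytopes of $P$ containing $p$ in the interior is nonempty and finite, hence has a minimal element $P'$ with respect to inclusion.

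First I would show $P'$ is a tetrahedron. Write $P'=\mathrm{conv}(S)$ where $S=P'\cap\Z^3$. For each $v\in S$, the hypothesis that no four lattice points of $P$ are coplanar will be used as follows. If $|S|\ge 5$, pick any vertex $v$ of $P'$; I claim $p$ still lies in the interior of $\mathrm{conv}(S\setminus\{v\})$ for a suitable choice of $v$, contradicting minimality. The clean way to see this: the interior point $p$ lies in the interior of some simplex $\sigma$ with vertices among $S$ (Carath\'eodory applied to $p$ in the \emph{open} polytope); as long as $P'$ has a vertex $v$ \emph{not} among those four simplex-vertices, deleting $v$ preserves $\sigma$, hence preserves $p$ in the interior — unless deleting $v$ drops the dimension, which cannot happen since $\sigma$ is already full-dimensional. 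So the only obstruction to shrinking is that every vertex of $P'$ is a vertex of $\sigma$, i.e. $P'=\sigma$ is a simplex, so $|S_{\mathrm{vert}}|=4$. Combined with the ``no four coplanar'' hypothesis, $P'$ being a lattice tetrahedron with no four coplanar lattice points forces $P'$ to have no lattice points on any facet other than the vertices and no extra lattice points at all beyond $S$; in particular $P'$ is a clean lattice tetrahedron.

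Next I would show $p$ is the \emph{only} interior lattice point of $P'$, which makes $P'$ a Fano tetrahedron by definition. If $P'$ had a second interior lattice point $q\ne p$, then by the same Carath\'eodory argument $q$ lies in the interior of a sub-simplex on the vertices of $P'$ — but $P'$ is already a simplex on $4$ vertices, so this sub-simplex is all of $P'$, giving no contradiction directly. Instead I would invoke minimality differently: consider the four ``corner'' tetrahedra obtained by replacing one vertex $v_i$ of $P'=[v_1,v_2,v_3,v_4]$ by the interior point $q$, i.e. $Q_i=[v_1,\dots,\widehat{v_i},\dots,v_4,q]$. These cover $P'$, so $p$ lies in (the closure of) some $Q_i$; if $p$ lies in the interior of some $Q_i$ we contradict minimality of $P'$ (as $Q_i\subsetneq P'$ is a lattice $3$-subpolytope of $P\supseteq P'$, still containing $p$ in its interior — shrinking strictly since $q$ is interior to $P'$ so $\mathrm{vol}(Q_i)<\mathrm{vol}(P')$). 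The remaining case is $p$ lying on a shared facet of two of the $Q_i$; such a facet is either a facet of $P'$ (impossible, $p$ is interior to $P'$) or a triangle $[v_j,v_k,q]$ for some $j,k$ — but then $v_j,v_k,q,p$ together with enough care, or rather: $p\in[v_j,v_k,q]$ with $v_j,v_k,q,p$ four lattice points; since $p\ne v_j,v_k,q$, these are four lattice points of $P$ lying in a plane, contradicting the hypothesis. Hence no such $q$ exists and $P'$ is Fano.

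The main obstacle I expect is the first reduction step — making rigorous that a non-simplex $P'$ containing $p$ in its interior always admits a proper lattice subpolytope still containing $p$ in its interior. The subtlety is that naively deleting a vertex can drop the dimension; the fix is to first fix a full-dimensional lattice simplex $\sigma\subseteq P'$ with $p\in\mathrm{int}\,\sigma$ (which exists by applying Carath\'eodory to the point $p$ in the open set $\mathrm{int}\,P'$, or by a standard triangulation-of-the-fan-over-$p$ argument), and then observe $\sigma$ itself is already a lattice $3$-subpolytope of $P$ with $p$ interior, so by minimality $P'=\sigma$ is forced to be a simplex from the outset; everything after that is the clean-and-Fano bookkeeping above, all powered by the ``no four coplanar'' hypothesis.
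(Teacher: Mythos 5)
Your proof is correct, and at its core it uses the same two ingredients as the paper's: a Carath\'eodory-type argument to produce a lattice tetrahedron with $p$ in its interior, and the ``no four coplanar lattice points'' hypothesis to guarantee that a second interior lattice point $q$ lets you pass to a strictly smaller such tetrahedron. The difference is in packaging. The paper iterates an explicit barycentric computation: given $p=\sum\alpha_ip_i$ and $q=\sum\beta_ip_i$, it replaces the $p_i$ minimizing $\alpha_i/\beta_i$ by $q$ and observes that the hypothesis forces all new coefficients to be strictly positive (a vanishing coefficient would put $p$ in a plane through three of those lattice points), so $p$ remains interior, with termination following from finiteness. You instead run an extremal argument: take a minimal lattice $3$-subpolytope $P'$ of $P$ with $p$ in its interior; minimality forces $P'$ to coincide with a Carath\'eodory simplex $\sigma$; and a second interior lattice point $q$ cones $P'=[v_1,\dots,v_4]$ into four corner tetrahedra $Q_i$, one of which must contain $p$ in its interior (contradicting minimality), since the only alternative is $p$ lying on a shared face $[v_j,v_k,q]$, which gives four coplanar lattice points. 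The paper's choice of index $i$ is precisely a coordinate description of which $Q_i$ contains $p$, so the two arguments are parallel; your extremal framing absorbs the termination of the iteration, while the paper's barycentric version avoids describing the subdivision of $P'$ geometrically. As you flag yourself, your opening vertex-deletion discussion is a detour: once you observe that a Carath\'eodory simplex $\sigma\subseteq P'$ already belongs to the same family, minimality gives $P'=\sigma$ at once.
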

\begin{proof}
By Carath\'eodory's Theorem \cite[I.2.3.]{Bar} we can find four lattice points $p_1, p_2, p_3,p_4$ in $P$ such that the convex hull $[p_1,p_2,p_3,p_4]$ contains $p$ as an interior point.  If $p$ is the only such interior lattice point, then we are done, if there is another interior lattice point $q$ then both $p$ and $q$ can be expressed as convex combinations of $p_1,p_2,p_3,$ and $p_4$  
\begin{align*}
p&=\alpha_1p_1+ \alpha_2p_2+\alpha_3p_3+\alpha_4p_4,\\
q&=\beta_1p_1+\beta_2p_2+\beta_3p_3+\beta_4p_4.
\end{align*}
Assume without loss of generality that $\frac{\alpha_4}{\beta_4}\leq \frac{\alpha_i}{\beta_i}$ for $i=1,\dots,4$. Multiplying the second equality by $\alpha_4/\beta_4$ and subtracting the result from the first, we get 
$$p=\frac{\alpha_1\beta_4-\alpha_4\beta_1}{\beta_4}p_1+\frac{\alpha_2\beta_4-\alpha_4\beta_2}{\beta_4}p_2+\frac{\alpha_3\beta_4-\alpha_4\beta_3}{\beta_4}p_3+\frac{\alpha_4}{\beta_4}q,$$
which is a convex combination of $p_1,p_2,p_3,$ and $q$.  Note that none of the coefficients is zero for otherwise $P$ would contain four coplanar lattice points. Thus $p$ is an interior point of the convex hull $[p_1,p_2,p_3,q]$.   We next pass to this tetrahedron and continue the process until $p$ is the only interior lattice point.
\end{proof}

We next generalize the result of Lemma~\ref{L:empty_tetrahedra} to the case when lattice tetrahedron $P$ is not necessarily empty.

\begin{Lemma}\label{L:volume_bound}
Let $P$ be a lattice tetrahedron and $Q$ be a lattice polytope such that  $|Q|\geq 3$,  $L(P)=L(Q)=1$ and $L(P+Q)=2$. Then $\Vol_3(P)\leq 5$.
\end{Lemma}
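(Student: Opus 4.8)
The plan is to run a short case analysis on the lattice points of $P$ and reduce each case to a result already established in this section. \emph{Case 1: $P$ is empty.} Then \rl{empty_tetrahedra} applies directly, and since $|Q|\geq 3$ it yields $\Vol_3(P)\leq 5$. \emph{Case 2: four lattice points of $P$ are coplanar.} Since $L(P)=1$, every edge of $P$ is primitive (an edge of lattice length $2$ would give $L(P)\geq 2$), and the four coplanar points are not collinear (that would force $L(P)\geq 3$); so they span a plane $H$, and $P\cap H$ is a lattice polygon with at least four lattice points and $L(P\cap H)\leq L(P)=1$. By the classification of planar lattice polytopes of Minkowski length one --- two-point segments, unit triangles, and $T_0$ (\cite[Th 1.4]{SoSo1}) --- $P\cap H$ is equivalent to $T_0$; each of its three vertices lies on an edge of $P$, hence (that edge being primitive) is a vertex of $P$, so $P\cap H$ is a facet of $P$ equivalent to $T_0$. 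Now \rp{T_0_or_T_1} forces $P$ to be equivalent to $T_1$, for which $\Vol_3(P)=3\leq 5$.

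\emph{Case 3: $P$ is not empty and no four lattice points of $P$ are coplanar.} Then, edges being primitive and any relative-interior lattice point of a facet being coplanar with that facet's three vertices, every non-vertex lattice point of $P$ is interior; since $P$ is not empty it has an interior lattice point $p$. By \rl{FanoLem}, $P$ contains a Fano subtetrahedron $P'$ (with $P'=P$ if $P$ is already Fano). Since $P'\subseteq P$ we have $L(P')=1$, while $2=L(P')+L(Q)\leq L(P'+Q)\leq L(P+Q)=2$, so \rp{Fano} applies to the pair $(P',Q)$ and gives $P'$ equivalent to $K_1$ or $K_2$; in particular $\Vol_3(P')\leq 5$. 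If $|P|=5$ then $P$ has a single non-vertex (hence interior) lattice point, which cannot be a vertex of $P'$, so $P'=P$ and we are done. It therefore remains to treat the case $|P|\geq 6$, and to settle it, it suffices to show that no such $P$ admits a $Q$ satisfying the hypotheses.

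This last step is the main obstacle: \rl{FanoLem} only bounds the volume of a subpolytope of $P$, so by itself it gives no upper bound on $\Vol_3(P)$, and one genuinely has to argue that a lattice tetrahedron $P$ with $L(P)=1$ and at least two interior lattice points cannot carry such a $Q$. I would handle it as follows. By \re{eight}, $|P|\leq 8$, so $|P|\in\{6,7,8\}$, and $P\supsetneq P'$ with $P'$ equivalent to $K_1$ or $K_2$ and $|P'|=5$. Thus $P$ is a lattice tetrahedron of Minkowski length one that properly contains a fixed copy of $K_1$ or of $K_2$ --- a finite list of possibilities, which one can enumerate (the remaining one, two, or three lattice points of $P$ are confined to a bounded region once $L(P)=1$ is imposed) and then test with the function {\tt AddTriangleHuh} to confirm that none admits a lattice polytope $Q$ with $|Q|=3$, $L(Q)=1$, and $L(P+Q)=2$ --- a contradiction. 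Alternatively, one could attempt a direct geometric argument: for $w\in(P\cap\Z^3)\setminus P'$ and the facet $F'$ of $P'$ separating $w$ from the interior point $p'$ of $P'$, the bipyramid $\conv(F'\cup\{w,p'\})$ lies in $P$, and combining it with a primitive segment drawn from $Q$ and exploiting the explicit facet structure of $K_1$ and $K_2$ should exhibit a Minkowski sum of three positive-dimensional lattice polytopes inside $P+Q$, contradicting $L(P+Q)=2$. Either way $P=P'$, and $\Vol_3(P)\leq 5$.
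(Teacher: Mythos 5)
Your proof takes a genuinely different route from the paper's, and while Cases 1, 2, and the $|P|=5$ sub-case of Case 3 are essentially sound, there is a real gap in Case 3 for $|P|\geq 6$, which is precisely the hard part of the lemma.

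Minor issue first: in Case 2 you write ``$P\cap H$ is a lattice polygon,'' but the set-theoretic intersection of a tetrahedron with a plane need not have lattice vertices. You should instead take $R$ to be the convex hull of the (at least four) coplanar lattice points of $P$; then $R\subset P$ is a proper $2$-dimensional lattice subpolytope with $|R|\geq 4$ and $L(R)=1$, hence $R$ is equivalent to $T_0$ by the planar classification, and \rp{T_0_or_T_1} applies directly. Your subsequent paragraph trying to show $R$ is a \emph{facet} of $P$ is unnecessary (and its vertices-on-edges step is shaky), since \rp{T_0_or_T_1} only needs a proper subpolytope.

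The genuine gap is the assertion that, for a lattice tetrahedron $P$ with $L(P)=1$ properly containing a copy of $K_1$ or $K_2$, ``the remaining one, two, or three lattice points of $P$ are confined to a bounded region.'' This is exactly the crux of the lemma and is not justified: $L(P)=1$ by itself does not obviously bound $\Vol_3(P)$ or the coordinates of a vertex of $P$ lying outside $K_1$ or $K_2$, so the proposed {\sc Magma} enumeration is not a priori finite. Your ``alternative'' geometric argument is only a sketch and does not fill this in. Moreover, one cannot simply appeal here to \rp{rule_out_K1} and \rp{rule_out_K2}, which cover exactly this configuration, because the paper proves both of those propositions \emph{using} \rl{volume_bound}, so doing so would be circular.

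For comparison, the paper avoids the case analysis entirely and argues by induction on $|P|$ (which ranges over $\{4,\dots,8\}$ by \re{eight}), with base case \rl{empty_tetrahedra}. The key trick you are missing is a subdivision step: given a non-vertex lattice point $p$ of $P$, coning $p$ with the vertices of $P$ partitions $P$ into three or four sub-tetrahedra, each with strictly fewer lattice points and each still satisfying the lemma's hypotheses with the same $Q$ (by superadditivity and monotonicity of $L$). The inductive hypothesis then yields $\Vol_3(P)\leq 20$ a priori, which, combined with an argument that at most one facet of $P$ is equivalent to $T_0$, normalizes $P$ to $[0,e_1,e_2,(a,b,c)]$ with $0\leq a\leq b\leq c\leq 20$ and reduces to a finite {\sc Magma} search. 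That subdivision idea is what produces the coordinate bound your argument takes for granted; without it, your Case 3 with $|P|\geq 6$ does not close.
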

\begin{proof} 
We have $4\leq |P|\leq 8$ and the case when $|P|=4$ is covered in Lemma~\ref{L:empty_tetrahedra}. We now argue by induction on $|P|$. We assume that the conclusion holds true for a tetrahedron with $n$ lattice points and assume that $|P|=n+1$. We pick a lattice point in $P$ which is not a vertex and connect it to the four vertices of $P$ to get a partition of $P$ into 3 or 4 tetrahedra (depending on whether the point is on the surface or in the interior of $P$). Each of these new tetrahedra has at most $n$ lattice points, so by the induction assumption the volume of each of them is at most 5, and hence the volume of $P$  is bounded from above by 20. 

The facets of $P$ are either unit triangles or triangles equivalent to $T_0$. Let the vertices of $P$ be $v_1,\dots,v_4$. If the facets $[v_1,v_2,v_3]$ and $[v_1,v_2,v_4]$ are equivalent to $T_0$ then the lattice points inside these facets are $(v_1+v_2+v_3)/3$ and $(v_1+v_2+v_4)/3$, but this implies that their difference  $(v_3-v_4)/3$ has integer coordinates and, hence, the edge $[v_3,v_4]$ is not primitive, which contradicts $L(P)=1$. We have shown that at most one facet of $P$ is equivalent to $T_0$, so the remaining ones are unit triangles. We can now apply a unimodular map to $P$ so that one of such facets becomes $[0,e_1,e_2]$ and, hence,
$P=[0,e_1,e_2, (a,b,c)]$ 
for some $(a,b,c)\in\Z^3$ such that $0\leq a\leq b\leq c\leq 20$.  For each such $P$ with $c\geq 6$ we run function {\tt AddTriangleHuh}. The output is empty, and the conclusion follows.
\end{proof}

\begin{Prop}\label{P:rule_out_K1}
Let $P,Q\subset\R^3$ be lattice polytopes such that $L(P)=L(Q)=1$ and $|Q|\geq 3$. Suppose that $P$ properly contains $K_1$. Then $L(P+Q)\geq 3$.
\end{Prop}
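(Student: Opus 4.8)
The plan is to leverage the classification work already done, together with the combinatorial rigidity of the Fano tetrahedron $K_1$, to show that adding any polytope $Q$ with a primitive segment to a polytope $P$ that strictly contains $K_1$ forces the Minkowski length to jump to at least $3$. First I would observe that $P$ contains $K_1$ as a proper subpolytope, so $P$ has at least one lattice point beyond the five lattice points of $K_1$ (its four vertices $v_1,\dots,v_4$ and its interior point $v_0$); call such an extra point $v_5$. Since $L(P)=1$, the polytope $P$ has no primitive segment summing with $K_1$ to length $>1$, but my goal is to produce, from $P$ and $Q$, a decomposition of length $3$.

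The key step is to invoke \rp{Fano-K1-K2}(1): if $L(K_1+Q)=2$ and $|Q|\geq 3$, then (up to the relevant equivalence) $Q$ is forced to be the convex hull of $v_0$ with three of the four vertices of $K_1$, i.e. $Q\in\{[v_0,v_1,v_2,v_3],[v_0,v_1,v_2,v_4],[v_0,v_1,v_3,v_4],[v_0,v_2,v_3,v_4]\}$, and in fact $(K_1,Q)$ is equivalent to $(K_1,S_1)$. If instead $|Q|=3$, I would use \rp{Fano} together with the segment/triangle bookkeeping from \rl{volume_bound} and the {\sc Magma} functions {\tt AddTriangleHuh}/{\tt AddTetraHuh} to pin down the (few) possibilities for $Q$ with $L(K_1+Q)=2$; the point is that in every surviving case $Q$ ``uses'' the interior point $v_0$ in an essential way. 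Now consider the extra lattice point $v_5\in P\setminus K_1$. The tetrahedron $[v_1,\dots,v_4]=K_1$ together with $v_5$ gives a larger lattice polytope inside $P$, and one checks that $[v_1,v_2,v_3,v_5]$ (or some relabeling) is again a Fano tetrahedron or contains one with interior point $v_0$ by \rl{FanoLem}; applying \rp{Fano-K1-K2} again with this second Fano tetrahedron forces $Q$ to be a convex hull of $v_0$ with three vertices chosen from $\{v_1,v_2,v_3,v_5\}$. Comparing the two constraints on $Q$ — one triple drawn from $\{v_1,v_2,v_3,v_4\}$ and one from $\{v_1,v_2,v_3,v_5\}$, both containing $v_0$ — leads to a contradiction exactly as in the proof of \rp{T_0_or_T_1}, since no single $Q$ can simultaneously lie in both families unless it repeats a vertex, which is impossible. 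Hence no such $Q$ with $L(P+Q)=2$ exists, i.e. $L(P+Q)\geq 3$.

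The main obstacle I anticipate is that \rp{Fano-K1-K2} is stated for $K_1$ \emph{equal} to the standard copy, so to reuse it for the second Fano tetrahedron $[v_1,v_2,v_3,v_5]$ I must first know that this tetrahedron is equivalent to $K_1$ (and not $K_2$ or one of the other six Fano tetrahedra). This requires showing that, given $L(P)=1$ and the constraint $L(P+Q)=2$, any Fano subtetrahedron of $P$ sharing the interior point $v_0$ must be of type $K_1$ — which follows from \rp{Fano} since $Q$ with $|Q|\geq 3$ and $L(\cdot+Q)=2$ only coexists with $K_1$ or $K_2$, and the $K_2$ case can be separated out (e.g. by noting that $P$ properly containing $K_1$ cannot also have a $K_2$ facet without violating $L(P)=1$, or by a direct {\sc Magma} check). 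A secondary subtlety is handling the case $|Q|=3$ versus $|Q|=4$ uniformly; here I would simply split into these two cases, using \rp{Fano} for the coarse list and then the explicit {\tt AddTriangleHuh}/{\tt AddTetraHuh} outputs to get the precise form of $Q$ in terms of the vertices and interior point, so that the ``incompatible triples'' argument applies verbatim in both cases.
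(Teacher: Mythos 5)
Your proposal has a concrete gap that the paper's argument avoids by taking a different route. You want to locate a second Fano tetrahedron inside $P$ — say $[v_1,v_2,v_3,v_5]$ after relabeling — sharing the interior lattice point $v_0$ with $K_1$, and then compare the two \rp{Fano-K1-K2} constraints on $Q$ to force a contradiction as in \rp{T_0_or_T_1}. Nothing guarantees that $[v_1,v_2,v_3,v_5]$ is a Fano tetrahedron with interior point $v_0$, or that it contains $v_0$ at all. \rl{FanoLem}, which you cite, requires that $P$ have no four coplanar lattice points (a hypothesis not present in \rp{rule_out_K1}), and even granting that, it only produces \emph{some} Fano subtetrahedron with interior point $v_0$ — with no control over its vertices, so it could simply return $K_1$ again and give no new information. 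When the extra vertex $v_5$ lies far from $K_1$, a tetrahedron built from it will typically have a different interior lattice point, so the two \rp{Fano-K1-K2}-constraints on $Q$ are anchored at different interior points and need not conflict. Your secondary worry — that the second Fano tetrahedron might be of type $K_2$ or another of Kasprzyk's eight classes — is also real and is not resolved by the sketch you give.

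The paper proceeds instead by a bounded exhaustive search. Writing $K_1$ in standard coordinates and letting $(a,b,c)$ be a vertex of $P$ outside $K_1$, the tetrahedron $[0,e_1,e_2,(a,b,c)]$ (and its analogues) is a subpolytope of $P$ with $L=1$, so \rl{volume_bound} — applied under the assumption $L(P+Q)=2$, for contradiction — gives $|a|,|b|,|c|\le 5$. For each such $(a,b,c)$ one forms $R=[e_1,e_2,e_3,-e_1-e_2-e_3,(a,b,c)]\subset P$, checks $L(R)=1$, and runs {\tt AddTriangleHuh} on $R$; the output shows no $(a,b,c)$ outside $K_1$ admits a compatible $Q$ of size $3$. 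So \rl{volume_bound} is not just bookkeeping — it is the step that makes the case analysis finite. I would either repair your structural argument by actually producing a correctly-positioned second Fano tetrahedron, or fall back on this bounded-search strategy.
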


\begin{proof} 
Let $(a,b,c)$ be a vertex of $P$ which does not belong to $K_1$. Then the tetrahedron $[0,e_1,e_2, (a,b,c)]$
is a subpolytope of $P$, so
using Lemma~\ref{L:volume_bound} we conclude $|c|\leq 5$. Similarly, we have $|a|\leq 5$ and $|b|\leq 5$.
For each such $(a,b,c)$ we work with the subpolytope 
$$R=\begin{bmatrix}1&0&0&-1&a\\0&1&0&-1&b\\0&0&1&-1&c\end{bmatrix}$$ 
of $P$.
We check whether $L(R)=1$ and if this is the case we run function {\tt AddTriangleHuh}, applied to $R$.  We observe that the only returned values are the five lattice points $(a,b,c)\in K_1$ and the conclusion follows.
\end{proof}

\begin{Prop}\label{P:rule_out_K2}
Let $P,Q\subset\R^3$ be lattice polytopes such that $L(P)=L(Q)=1$ and $|Q|\geq 3$. Suppose that $P$ properly contains $K_2$. Then $L(P+Q)\geq 3$.
\end{Prop}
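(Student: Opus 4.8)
The plan is to follow exactly the template established in the proof of \rp{rule_out_K1}. We want to show that if $P$ properly contains $K_2$ and $L(P)=1$, then no polytope $Q$ with $|Q|\geq 3$ and $L(Q)=1$ can satisfy $L(P+Q)=2$. Since $P$ properly contains $K_2$, it has a vertex $(a,b,c)\notin K_2$, and the tetrahedron spanned by three of the vertices of $K_2$ together with $(a,b,c)$ is a lattice subpolytope of $P$. Choosing three convenient vertices of $K_2$ so that the resulting tetrahedron has two facets that are unit triangles sharing an edge (the setup of \rl{volume_bound}), we get that $\Vol_3$ of that subpolytope is at most $5$, which bounds $|a|,|b|,|c|$ by a constant (at most $5$ after the appropriate normalization, as in the $K_1$ case).

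First I would record the vertices of $K_2$ explicitly, namely $e_1,\, e_2,\, e_1+e_2+2e_3,\, -e_1-e_2-e_3$, and pick a sub-triple whose convex hull with a new vertex is amenable to \rl{volume_bound}; then apply \rl{volume_bound} to each of the $\binom{4}{3}$ choices (or just the one or two choices needed) to conclude that any vertex of $P$ outside $K_2$ lies in a bounded box, say $|a|,|b|,|c|\leq 5$. Next, for each such integer point $(a,b,c)$ in the box, I would form the lattice polytope
$$R=\conv\bigl(K_2\cup\{(a,b,c)\}\bigr),$$
a subpolytope of $P$, check whether $L(R)=1$ (discarding those for which it is not, since then $L(P)\geq L(R)\geq 2$, contradicting $L(P)=1$), and for the survivors run the {\sc Magma} function {\tt AddTriangleHuh} applied to $R$. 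The claim is that the only points $(a,b,c)$ for which {\tt AddTriangleHuh($R$)} returns a nonnegative answer are the lattice points of $K_2$ itself, so that no proper enlargement of $K_2$ admits a $Q$ with $|Q|\geq 3$ and $L(P+Q)=2$; hence $L(P+Q)\geq 3$.

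The one point requiring care — and the main obstacle — is ensuring that \rl{volume_bound} genuinely applies to whatever subtetrahedron of $R$ I use to bound the new vertex. Lemma~\ref{L:volume_bound} requires $L$ of the tetrahedron to be $1$ and $L$ of its sum with some $Q$ (with $|Q|\geq 3$) to be $2$; more directly, what I actually need is just its conclusion $\Vol_3\leq 5$ for a tetrahedron of Minkowski length one admitting such a $Q$, so I should verify that the relevant subtetrahedron $[w_1,w_2,w_3,(a,b,c)]$ inherits $L=1$ from $L(P)=1$ (true by monotonicity) and that $L$ of its sum with $Q$ is at most $2$ (true since it is a subsum of $P+Q$). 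Combined, $\Vol_3$ of that subtetrahedron is at most $5$, which by the argument in the proof of \rl{volume_bound} — reducing to $P'=[0,e_1,e_2,(a',b',c')]$ with $0\le a'\le b'\le c'$ — yields the bounded box after the unimodular normalization fixing a unit-triangle facet of $K_2$. The rest is the finite {\sc Magma} verification, identical in spirit to \rp{rule_out_K1}.
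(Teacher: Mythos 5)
Your proposal is correct and follows essentially the same route as the paper: use \rl{volume_bound} on subtetrahedra of $R=\conv(K_2\cup\{(a,b,c)\})$ to bound the extra vertex in a finite box, then verify the cases with \texttt{AddTriangleHuh}. The only imprecision is your claimed box $|a|,|b|,|c|\leq 5$ ``by analogy with the $K_1$ case''; unlike $K_1$, the vertices of $K_2$ with the origin do not form unit simplices, so the volume inequalities from \rl{volume_bound} have the form $|c|\leq 5$, $|2b-c|\leq 5$, $|2a-c|\leq 5$, $|b+c|\leq 5$, $|a+c|\leq 5$, and one must combine them (e.g.\ $|3a|\leq|2a-c|+|a+c|$) to get the explicit box $|a|,|b|\leq 3$, $|c|\leq 5$ --- still a finite region, so your argument goes through after this small correction.
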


\begin{proof} 
Let $(a,b,c)$ be a vertex of $P$ which is not in $K_2$. Then $R=\begin{bmatrix}1&0&1&-1&a\\0&1&1&-1&b\\0&0&2&-1&c\end{bmatrix}$ is a subpolytope of $P$.
Using Lemma~\ref{L:volume_bound} and considering the simplices in $R$ that $(a,b,c)$ generates together with other vertices of $R$ (including the origin) we get
$$|c|\leq 5,\ \ |2b-c|\leq 5,\ \ |2a-c|\leq 5,\ \ |b+c|\leq 5,\ \ |a+c|\leq 5,
$$
which implies that $|a|\leq 3, |b|\leq 3$, and $|c|\leq 5$. For each $R$ with such restriction on $a,b$, and $c$ we check whether $L(R)=1$.  If this is the case we run function   {\tt AddTriangleHuh} applied to $R$ and arrive at the stated conclusion. 
\end{proof}

\subsection{Summary of the results}\label{S:summary} 
In the next three theorems  we summarize the classification according to the
sizes of the polytopes in the pair: $(|P|,|Q|)=(5,4), (5,5), (6,3)$. In fact, we show that the latter case we have $L(P+Q)\geq 3$. In particular, this implies that if a maximal decomposition contains a polytope of size larger than 5 then the other summands must be lattice segments. Note that the cases $(|P|,|Q|)=(4,3), (4,4)$ are covered by \rr{empty+3}, 
\rt{two_empty_tetrahedra}, Proposition~\ref{L:T_0plusQ}, and \rt{T_0+four}.

\begin{Th}\label{T:5plus4}
Let $P,Q\subset\R^3$ be lattice polytopes with $L(P)=L(Q)=1$, $L(P+Q)=2$,  $|P|=5$, and  $|Q|=4$. Then the pair $(P,Q)$ is unimodularly equivalent to $(K_1, S_1), (K_2,S)$, or $(E,S_2)$, where  
$S=\begin{bmatrix} 0 & 0 & 1 & 1\\ 0 & 1 & 0 & 1\\ 0 & 0 & 0 & 2 \end{bmatrix}$ is equivalent to $S_2$.

Further, if (up to a lattice translation) $P=E$ then $Q=S_2$. Similarly, if $P=K_2$ then $Q=S$, but if $P=K_1$ there are four options for $Q$. Namely,  $Q$ is one of the four empty tetrahedra properly contained in $K_1$.
\end{Th}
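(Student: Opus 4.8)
\textbf{Proof plan for Theorem~\ref{T:5plus4}.}
The strategy is to combine the subsection-by-subsection classification already carried out with a short case analysis on the shape of $P$. Since $|P|=5$ and $L(P)=1$, the polytope $P$ falls into exactly one of the four categories listed after \re{eight}: $P$ is empty; $P$ is equivalent to $T_0$; $P$ properly contains a copy of $T_0$; or $P$ is clean but not empty. The case $P\simeq T_0$ is impossible here because then $|P|=3\neq 5$, so three cases remain. First I would dispose of the empty case: by \rt{empty_5points}, the only empty $P$ with $|P|=5$, $L(P)=1$ admitting a $Q$ with $|Q|=4$, $L(Q)=1$, $L(P+Q)=2$ is $P\simeq E$, and then $(P,Q)$ is equivalent to $(E,S_2)$, with $Q=S_2$ up to translation once $P=E$. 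This handles the $(E,S_2)$ branch and the uniqueness claim for $Q$ in that branch.

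Next I would treat the case where $P$ properly contains a subpolytope equivalent to $T_0$. By \rp{T_0_or_T_1}, such a $P$ with a valid partner $Q$ must be equivalent to $T_1$; but $|T_1|=4$, not $5$, so this case produces nothing new — or, more precisely, it does not occur under the hypothesis $|P|=5$. (If one prefers, $|P|=5$ with a proper $T_0$-subpolytope forces $P\simeq T_1$ by the size-$5$ classification of \cite{BlancoSantos1}, and $|T_1|=4$ is a contradiction; either way the case is vacuous.) The remaining case is $P$ clean but not empty with $|P|=5$; since we may assume no four lattice points of $P$ are coplanar (the coplanar situation having been absorbed into the $T_0$-subpolytope case), $P$ is a Fano tetrahedron plus nothing — but a Fano tetrahedron has only $5$ lattice points, so $P$ itself is a Fano tetrahedron. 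By \rp{Fano}, the only Fano tetrahedra admitting a valid $Q$ are $K_1$ and $K_2$, and then \rp{Fano-K1-K2} pins down $Q$: for $P\simeq K_1$, $Q$ is one of the four empty tetrahedra $[v_0,v_i,v_j,v_k]$ and $(P,Q)\simeq(K_1,S_1)$; for $P=K_2$, $Q$ equals the explicit tetrahedron $S=\begin{bmatrix}0&0&1&1\\0&1&0&1\\0&0&0&2\end{bmatrix}$, which \rp{Fano-K1-K2}(2) verifies is equivalent to $S_2$.

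Assembling the three cases yields exactly the three equivalence classes $(K_1,S_1)$, $(K_2,S)$, $(E,S_2)$ for the pair, together with the refined translation-level statements: $P=E\Rightarrow Q=S_2$, $P=K_2\Rightarrow Q=S$, and $P=K_1\Rightarrow Q$ is one of the four empty tetrahedra properly contained in $K_1$. I expect no serious obstacle here since all the heavy lifting — the width bounds of \rl{widthbound}, the volume bound of \rl{volume_bound}, the Fano reduction of \rl{FanoLem}, and the {\sc Magma}-assisted enumerations — is already in place; the only point requiring care is confirming that the four-categories dichotomy is genuinely exhaustive for $|P|=5$ and that the overlaps (e.g.\ a clean non-empty $P$ that happens to contain coplanar points) have been correctly routed to the $T_0$-subpolytope subsection, so that no case of $|P|=5$ is missed.
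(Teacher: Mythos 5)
There is a concrete factual error in the $T_0$-subpolytope branch. You claim ``$|T_1|=4$, not $5$, so this case produces nothing new — or, more precisely, it does not occur under the hypothesis $|P|=5$.'' But $T_1=\left[\begin{smallmatrix} 1&0&-1&0\\ 0&1&-1&0\\ 0&0&0&1\end{smallmatrix}\right]$ has $5$ lattice points: the three vertices of its $T_0$-facet in the plane $z=0$, the center $(0,0,0)$ of that facet, and the apex $(0,0,1)$. The same count gives $|T_2|=5$. Indeed, the paper introduces $T_1$ and $T_2$ precisely as the two size-$5$ polytopes (from the \cite{BlancoSantos1} classification) that contain a proper copy of $T_0$, so this case is not vacuous under $|P|=5$ and cannot be dismissed on cardinality grounds. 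The correct way to close this branch is exactly what the paper does: apply \rp{T_1} (which says that for $P=T_1$ the only admissible $Q$ with $|Q|\geq 3$ has $|Q|=3$, ruling out $|Q|=4$) and \rp{T_2} (which says no admissible $Q$ with $|Q|\geq 3$ exists at all). You cite \rp{T_0_or_T_1} to get $P\simeq T_1$, but you then need \rp{T_1} itself, not a miscount of $|T_1|$, to exclude $|Q|=4$.

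Apart from that, your case split and routing match the paper's proof: empty $P$ via \rt{empty_5points}, and clean-but-not-empty $P$ reduced to a Fano tetrahedron and handled by \rp{Fano} and \rp{Fano-K1-K2}. Replacing the false $|T_1|=4$ claim with the invocations of \rp{T_1} and \rp{T_2} would make the argument correct and essentially identical to the one in the paper.
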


\begin{proof}
If $P$ is an empty polytope, by Theorem~\ref{T:empty_5points} we get the pair $(E,S_2)$. If $P$ contains a facet equivalent to $T_0$ then  $P$ is equivalent to either $T_1$ or $T_2$ and by Propositions~\ref{P:T_1} and \ref{P:T_2} there is no such $Q$. Finally, if $P$ is a Fano tetrahedron then Proposition~\ref{P:Fano-K1-K2} gives the first two pairs on the list. 
\end{proof}

\begin{Th}\label{T:5plus5} Let $P,Q\subset\R^3$ be lattice polytopes with $L(P)=L(Q)=1$, $L(P+Q)=2$,  $|P|=5$, and  $|Q|=5$. Then the pair $(P,Q)$ is  equivalent to $(K_1, K_1)$.
\end{Th}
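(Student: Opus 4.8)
The strategy mirrors the proof of \rt{5plus4}: split according to the combinatorial type of $P$ (empty, contains a $T_0$-facet, or Fano), use the classification results already established, and then use the symmetry $|P|=|Q|=5$ to force both polytopes into the same class. First I would note that a lattice polytope $P$ with $|P|=5$ and $L(P)=1$ falls into one of the cases analyzed in \rs{classification-2}: by the classification of size-$5$ lattice $3$-polytopes in~\cite{BlancoSantos1}, either $P$ is an empty tetrahedron with an extra lattice point (handled via Howe's Theorem, as in \rt{empty_5points}), or $P$ contains a subpolytope equivalent to $T_0$ (so $P\simeq T_1$ or $T_2$ by \rp{T_0_or_T_1}), or $P$ is a Fano tetrahedron (classified in~\cite{Kasp}). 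The same trichotomy applies to $Q$.

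\textbf{Main steps.} The key observation is that most of these classes have already been ruled out as a summand in a maximal pair, \emph{except} when paired with very specific partners. Concretely: if $P\simeq T_2$, then \rp{T_2} gives $L(P+Q)\geq 3$ for any $Q$ with $|Q|\geq 3$, so this is impossible; if $P\simeq T_1$, then \rp{T_1} forces $|Q|=3$, contradicting $|Q|=5$; if $P$ is an empty lattice polytope with $|P|=5$ (so $P\simeq E$), then \rt{empty_5points} forces $|Q|=4$ (the only partner is $S_2$), again contradicting $|Q|=5$. By symmetry the same arguments exclude $Q$ from being of type $T_1$, $T_2$, or equivalent to $E$. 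This leaves only the possibility that both $P$ and $Q$ are Fano tetrahedra. Then \rp{Fano} applies to both: since $|Q|\geq 3$, the polytope $P$ must be equivalent to $K_1$ or $K_2$, and since $|P|\geq 3$, likewise $Q$ must be equivalent to $K_1$ or $K_2$. To finish, I would rule out the mixed and the $(K_2,K_2)$ cases: if $P\simeq K_2$, then \rp{Fano-K1-K2}(2) shows the only $Q$ with $|Q|=4$ and $L(P+Q)=2$ is equivalent to $S_2$, which has $|Q|=4\neq 5$, and more to the point, \rt{5plus4} already records that the only size-$5$ partner forcing $L(P+Q)=2$ that arises from a Fano $P$ is $(K_1,S_1)$ with $|Q|=4$ — so $K_2$ cannot be paired with a size-$5$ polytope at all. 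Hence $P\simeq K_1$, and symmetrically $Q\simeq K_1$, giving the pair $(K_1,K_1)$.

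\textbf{Expected obstacle.} The routine part is assembling the case analysis; the one place requiring genuine care is confirming that $(K_1,K_1)$ actually \emph{does} satisfy $L(K_1+K_1)=2$ (so the theorem is not vacuous) — but this is a finite check, carried out with the {\tt minktwo} or {\tt AddTetraHuh} functions applied to $K_1$, and it is presumably what the {\sc Magma} verification supplies. The subtler logical point is that I must be sure I have exhausted the classification of size-$5$ polytopes with $L(P)=1$: the trichotomy (empty / $T_0$-subpolytope / Fano) must genuinely cover all of them. A size-$5$ lattice $3$-polytope that is not a tetrahedron has a vertex one can delete to leave a tetrahedron, and if that tetrahedron together with the removed point is clean with no four coplanar points then $P$ itself would be $5$ points in "general position" — but a size-$5$ polytope is either a tetrahedron plus one point or has two points on a common facet; the classification in~\cite{BlancoSantos1} together with the results of~\rs{T0-facet} shows the only non-tetrahedral case with $L(P)=1$ is $P\simeq E$ or $P\simeq T_1,T_2$. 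So the trichotomy is: Fano tetrahedron, or $E$, or contains $T_0$; each is dispatched above. I expect the write-up to be short, essentially a bookkeeping argument citing \rt{empty_5points}, \rp{T_1}, \rp{T_2}, \rp{Fano}, and \rp{Fano-K1-K2}.
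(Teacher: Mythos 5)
Your overall framing (trichotomy: empty, contains a $T_0$-subpolytope, Fano tetrahedron) agrees with the spirit of the paper's argument, and you correctly dispatch $T_1$ and $T_2$ via \rp{T_1} and \rp{T_2}. However, there are two genuine gaps.

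First, you claim that if $P\simeq E$ then ``\rt{empty_5points} forces $|Q|=4$'' and that if $P\simeq K_2$ then ``$K_2$ cannot be paired with a size-$5$ polytope at all.'' Neither of these follows from the cited results: \rt{empty_5points} takes $|Q|=4$ as a \emph{hypothesis}, not a conclusion, and \rt{5plus4} is a statement about $(5,4)$-pairs, so it says nothing directly about whether a $5$-point $Q$ can pair with $E$ or $K_2$. The missing idea — which is exactly the crux of the paper's proof — is the following downward-compatibility argument: if $|Q|=5$ and $Q'\subset Q$ is any $4$-point subpolytope, then $L(Q')=1$ (positive-dimensional sub-polytope of a polytope with Minkowski length $1$) and $L(P+Q')=2$ (since $2=L(P)+L(Q')\le L(P+Q')\le L(P+Q)=2$), so \rt{5plus4} applies to the pair $(P,Q')$. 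When $P\simeq E$ or $P\simeq K_2$, \rt{5plus4} says there is (up to lattice translation) a \emph{unique} $4$-point polytope that can appear with $P$; but a $5$-point $Q$ contains more than one distinct $4$-point subpolytope, and they cannot all be lattice translates of a single tetrahedron. This contradiction is what excludes $E$ and $K_2$; without it, your proof does not go through. (The same argument, applied when $P=K_1$ where there are exactly four admissible $4$-point partners, the four empty tetrahedra contained in $K_1$, is what pins down $Q$ to be assembled from them, hence equal to $K_1$ up to translation.)

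Second, the conclusion ``$(P,Q)$ is equivalent to $(K_1,K_1)$'' is a statement about equivalence of \emph{pairs} in the sense of \rs{prelim}: there must be a single $\varphi\in\AGL(3,\Z)$ and lattice vectors $v_1,v_2$ with $\varphi(P)+v_1=K_1$ and $\varphi(Q)+v_2=K_1$. You only argue $P\simeq K_1$ and $Q\simeq K_1$ separately, which is weaker. The argument in the previous paragraph is what supplies the simultaneous normalization: once $\varphi$ is chosen so that $\varphi(P)=K_1$, the $4$-point subpolytopes of $\varphi(Q)$ are forced (by \rt{5plus4} applied to $K_1$) to be, up to translation, the four sub-tetrahedra of $K_1$, so $\varphi(Q)$ is a translate of $K_1$. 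Finally, you correctly note that the non-vacuity check $L(K_1+K_1)=2$ is a finite {\sc Magma} computation; the paper indeed records this.
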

\begin{proof} By Theorem~\ref{T:5plus4} each $P$ and $Q$ is individually equivalent to $K_1$, $K_2$, or $E$. Further, if $P$ is equivalent to $E$ or $K_2$ then there is only one option for a $4$-point polytope that can appear in a maximal decomposition with $P$ and, since, a 5-point $Q$ would contain more than one 4-point subpolytope, we conclude that each $P$ and $Q$ is equivalent to $K_1$. Let $P=K_1$ then by  Theorem~\ref{T:5plus4} the only  lattice 4-polytopes that can appear with $P$ in a maximal decomposition are the four empty tetrahedra properly contained in $K_1$. Then the standard way of assembling these tetrahedra to form $K_1$ is the only way a 5-point lattice polytope can be formed using these, and we conclude that up to a lattice translation $P=Q=K_1$. We use {\sc Magma} to check that $L(K_1+K_1)=2$.
\end{proof}

Finally, we show that lattice polytopes $P$ and $Q$ with $|P|=6$ and $|Q|=3$ cannot appear together in a maximal decomposition.

\begin{Th}\label{T:6plus3} Let $P,Q\subset\R^3$ be lattice polytopes such that $|P|=6$ and $|Q|=3$. Then $L(P+Q)\geq 3$.
\end{Th}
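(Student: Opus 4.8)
The strategy is to reduce to the already-established classification results for pairs $(P,Q)$ with $L(P)=L(Q)=1$. Assume for contradiction that $L(P+Q)=2$. First I would record that this forces $L(P)=L(Q)=1$: indeed $L$ is superadditive and monotone, so $L(P),L(Q)\le L(P+Q)=2$, and if, say, $L(P)=2$ then already $P$ contains a maximal decomposition of length two, and adding the positive-dimensional polytope $Q$ would give $L(P+Q)\ge 3$. Hence $L(P)=L(Q)=1$, and in particular $|P|\le 8$ and $|Q|\le 8$ by \re{eight}; here of course $|P|=6$ and $|Q|=3$.

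\textbf{Case analysis on $P$.} With $L(P)=1$ and $|P|=6$, the polytope $P$ falls into one of the four structural types listed after \rex{two}'s classification discussion: either $P$ is empty, or $P$ is equivalent to $T_0$ (impossible since $|T_0|=4\ne 6$), or $P$ contains a proper subpolytope equivalent to $T_0$, or $P$ is clean but not empty. If $P$ is empty with $|P|=6$, then \rp{empty_6points} already gives $L(P+Q)\ge 3$ for any $Q$ with $|Q|\ge 3$ and $L(Q)=1$, a contradiction. If $P$ contains a proper subpolytope equivalent to $T_0$, then \rp{T_0_or_T_1} forces $P$ to be equivalent to $T_1$, so $|P|=5\ne 6$, again a contradiction. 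This leaves the case where $P$ is clean but not empty, i.e. its only boundary lattice points are its vertices and it has at least one interior lattice point; since we have disposed of the case that $P$ contains a $T_0$-subpolytope, we may further assume no four lattice points of $P$ are coplanar.

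\textbf{The clean non-empty case.} Here $P$ has an interior lattice point $p$, so by \rl{FanoLem} it contains a subpolytope $K$ which is a Fano tetrahedron with interior point $p$. Since $L(K)\le L(P)=1$ and $L(K+Q)\le L(P+Q)=2$, and $|Q|=3\ge 3$, \rp{Fano} applies: $K$ must be equivalent to $K_1$ or $K_2$. Now $|K|=5<6=|P|$, so $P$ \emph{properly} contains a polytope equivalent to $K_1$ or to $K_2$. In the first subcase \rp{rule_out_K1} gives $L(P+Q)\ge 3$; in the second subcase \rp{rule_out_K2} gives the same. Either way we contradict $L(P+Q)=2$. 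Having exhausted all four structural types for $P$, we conclude $L(P+Q)\ne 2$, hence $L(P+Q)\ge 3$ (it cannot be $\le 1$ since $P+Q$ contains, e.g., a primitive segment from $P$ plus one from $Q$, each positive-dimensional, giving $L(P+Q)\ge 2$ — but we only need the lower bound $L(P+Q)\ge 3$, which follows once $L(P+Q)=2$ is excluded, using $L(P+Q)\ge L(P)+L(Q)=2$).

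\textbf{Main obstacle.} There is essentially no obstacle beyond bookkeeping: every case is closed by a result proved earlier in the section. The only point requiring a little care is ensuring the reduction inputs are legitimate — namely that in the clean non-empty case we really are entitled to assume no four lattice points are coplanar (this is exactly the hypothesis of \rl{FanoLem}), which is justified because the $T_0$-subpolytope case has already been handled by \rp{T_0_or_T_1}. I would present the argument as a short chain of invocations of Propositions~\ref{P:empty_6points}, \ref{P:T_0_or_T_1}, \ref{P:Fano}, \ref{P:rule_out_K1}, and \ref{P:rule_out_K2}, preceded by the superadditivity observation that pins down $L(P)=L(Q)=1$.
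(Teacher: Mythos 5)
Your proposal is correct and takes essentially the same route as the paper: dispose of the empty case via Proposition~\ref{P:empty_6points}, the $T_0$-subpolytope case via Proposition~\ref{P:T_0_or_T_1}, and the clean non-empty case via Lemma~\ref{L:FanoLem}, Proposition~\ref{P:Fano}, and Propositions~\ref{P:rule_out_K1}--\ref{P:rule_out_K2}. The preliminary observation that one may assume $L(P)=L(Q)=1$ (else superadditivity already yields $L(P+Q)\geq 3$) is left implicit in the paper and is a welcome clarification, since those propositions carry $L(P)=L(Q)=1$ as hypotheses while the theorem statement does not.
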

\begin{proof}
The case when $P$ is empty is covered by Proposition~\ref{P:empty_6points}. If $P$ contains a subpolytope equivalent to $T_0$ the conclusion follows from  Proposition~\ref{P:T_0_or_T_1}.
If $P$ is clean then it contains two interior lattice points. Then by Lemma~\ref{L:FanoLem}  it properly contains a Fano tetrahedron which by  Proposition~\ref{P:Fano} is equivalent to $K_1$ or $K_2$ and these cases are then covered  by  Propositions~\ref{P:rule_out_K1} and \ref{P:rule_out_K2}.
\end{proof}


\section{Classifying triples $(P,Q,R)$ with $L(P+Q+R)=3$}

Our next goal is to explore how three (and more) lattice polytopes with at least four lattice points each can appear together in a maximal decomposition.

\begin{Lemma}\label{L:three_segments}  Let $I_1$, $I_2$,  and $I_3$ be three lattice segments in $\R^3$ with the corresponding linearly independent direction vectors $u_1,u_2$, and $u_3$ such that $L(I_1+I_2+I_3)=3$. Let $v$ be the primitive normal vector to the  plane spanned by $u_1$ and $u_2$.
Then there are two possible cases:
\begin{itemize}
\item[(1)] $I_1+I_2$ is equivalent to the  unit square $[0,e_1,e_2,e_1+e_2]$,
and then $\w_v(I_3)\leq 9$;
\item[(2)] $I_1+I_2$ is equivalent to the parallelogram $[0,e_1,e_1+2e_2,2e_1+2e_2]$,
and then $\w_v(I_3)\leq~4$.
\end{itemize}
\end{Lemma}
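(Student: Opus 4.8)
The plan is to first pin down which of the two listed shapes $I_1+I_2$ must be, and then, in each case, to control $\w_v(I_3)$ by combining a crude a priori bound with a finite {\sc Magma} search, exactly in the spirit of the proof of \rl{widthbound}.

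\emph{Step 1: the dichotomy for $I_1+I_2$.} Since $L(I_1+I_2+I_3)=3$, the decomposition $I_1+I_2+I_3$ is maximal in itself, so the basic properties of the Minkowski length (\rs{mink-length}) give $L(I_i)=1$ for each $i$ (hence every $I_i$ is primitive) and $L(I_1+I_2)=2$. Thus $(I_1,I_2)$ is a pair of primitive segments with linearly independent directions and $L(I_1+I_2)=2$, so by \cite[Lemma 1.7]{SoSo1} it is equivalent to $([0,e_1],[0,e_2])$ or to $([0,e_1],[0,e_1+2e_2])$; these are precisely cases~(1) and~(2). (One may re-derive this directly: after mapping $u_1$ to $e_1$ and reducing $u_2=(a,b)$, $b>0$, to $0\le a<b$ with $\gcd(a,b)=1$ by a shear fixing $e_1$, one checks that $b\ge 3$ forces the parallelogram $[0,e_1]+[0,u_2]$ to contain a Minkowski sum of three lattice segments, so $b\le 2$, leaving $u_2=e_2$ or $u_2=e_1+2e_2$.) After an affine unimodular change of coordinates we may assume $I_1+I_2$ lies in the plane $z=0$; then $u_1,u_2$ span that plane, so $v=e_3$ and $\w_v(I_3)=|r|$, where $u_3=(p,q,r)$ is the primitive direction of $I_3$. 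Linear independence gives $r\ne 0$, and we may take $r>0$.

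\emph{Step 2: a priori bound on $r$.} Applying a further shear $\left(\begin{matrix}1&0&\ast\\0&1&\ast\\0&0&1\end{matrix}\right)$, which fixes the plane $z=0$ and hence $I_1+I_2$, we may assume $0\le p,q<r$ (and, in case~(1), $p\le q$, using the symmetry of the unit square). Now $I_1+I_2+I_3$ is the parallelepiped with edge vectors $u_1,u_2,u_3$, so it contains a translate of a half-open fundamental parallelepiped of the sublattice $\langle u_1,u_2,u_3\rangle\subseteq\Z^3$, and therefore contains at least $|\det(u_1,u_2,u_3)|$ lattice points; this determinant equals $r$ in case~(1) and $2r$ in case~(2). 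On the other hand $L(I_1+I_2+I_3)=3$ gives at most $(3+1)^3=64$ lattice points by \re{eight}. Hence $r\le 64$ in case~(1) and $r\le 32$ in case~(2).

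\emph{Step 3 and the main difficulty.} It then remains to run through the finitely many primitive vectors $(p,q,r)$ with $0\le p,q<r$ and $r$ within the bound of Step~2, test with {\sc Magma} whether $L\bigl(I_1+I_2+[0,(p,q,r)]\bigr)=3$ (using the routines described in \rs{classification-2}), and take the largest surviving $r=\w_v(I_3)$; this is expected to yield $\w_v(I_3)\le 9$ in case~(1) and $\w_v(I_3)\le 4$ in case~(2). The mathematical content here is light: the only place the sharp constants $9$ and $4$ actually enter is the computer search, and the main obstacle is simply that the gap between the crude bounds $64$ (resp.\ $32$) and the true answers makes that search the substantive part, so care is needed to set up the normalizations in Steps~1--2 so that the enumeration is genuinely exhaustive up to the relevant equivalences.
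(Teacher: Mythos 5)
Your proof is correct and follows the same overall strategy as the paper's: reduce to the two listed parallelograms via \cite[Lemma 1.7]{SoSo1}, obtain a finite a priori bound on $\w_v(I_3)$, and finish with a {\sc Magma} search over the remaining directions. The genuine difference is in Step~2. The paper's proof cites Lemma~\ref{L:widthbound} ``applied to $P=I_1+I_2$ and $Q=I_3$'' to get $\w_v(I_3)\le 14$; but that lemma is stated for a lattice \emph{triangle} $P$ with $L(P)=1$ and $L(P+I)=2$, whereas $I_1+I_2$ is a parallelogram with $L(I_1+I_2)=2$. The intended reading is presumably to apply the lemma to a unit triangle $T\subset I_1+I_2$ containing $I_1$, but then one needs $L(T+I_3)=2$, and that deduction from $L(I_1+I_2+I_3)=3$ is not spelled out (a priori $L(T+I_3)$ could be $3$). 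Your replacement sidesteps this: after normalizing $u_3$ by a shear fixing the plane $z=0$, you count lattice points in the half-open fundamental parallelepiped on $u_1,u_2,u_3$, note this count is $|\det(u_1,u_2,u_3)|=r$ (case (1)) or $2r$ (case (2)), and compare against $(L+1)^3=64$ from \re{eight}, giving $\w_v(I_3)\le 64$ and $\le 32$ respectively, directly from $L(I_1+I_2+I_3)=3$ with no auxiliary claim about a sub-triangle. The cost is a noticeably larger (but still finite) search range; the gain is a self-contained justification for the a priori bound. The remaining ingredients — the reduction to $0\le p,q<r$, the extra $p\le q$ symmetry available only in case (1), the determinant computations, and the inline rederivation of \cite[Lemma 1.7]{SoSo1} — are all sound.
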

\begin{proof}
We can assume that $u_1=e_1$ and $u_2=(a,b,0)$ where $0\leq a< b$, and hence $v=e_3$. By 
\cite[Lemma 1.7]{SoSo1} we have $b\leq 2$ which implies that either $b=1$ and $a=0$, or $b=2$ and $a=1$.
Note that in the first case  $I_1+I_2$ is equivalent to the  unit square  $[0,e_1,e_2,e_1+e_2]$,
and in the second  to the parallelogram $[0,e_1,e_1+2e_2,2e_1+2e_2]$.
We can now assume that $u_3=(a,b,c)$, where $0\leq a\leq b< c$.
Applying Lemma~\ref{L:widthbound} to $P=I_1+I_2$ and $Q=I_3$ we get $c=\w_{e_3}(I_3)\leq 14$. Next we use {\sc Magma} to check whether $L(I_1+I_2+I_3)=3$ for all such $a,b$, and $c$ and improve the bound 
to $c=\w_{e_3}(I_3)\leq 9$ in case (1) and to $c=\w_{e_3}(I_3)\leq 4$ in case (2).
\end{proof}

\begin{Prop}\label{P:T_0QR}
Let $Q,R\subset\R^3$ be lattice polytopes with $|Q|\geq 3$ and $|R|\geq 3$ such that $L(T_0+Q+R)=3$. Then $|Q|=|R|=3$ and  up to a lattice translation 
$$Q=R=\begin{bmatrix}0&1&0\\0&0&0\\0&0&1\end{bmatrix},\ \ \begin{bmatrix}0&0&0\\0&1&0\\0&0&1\end{bmatrix},\ {\rm or}\ \begin{bmatrix}0&1&0\\0&1&0\\0&0&1\end{bmatrix}.
$$

\end{Prop}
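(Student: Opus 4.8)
The plan is to mimic the structure of the pair-classification results, now with $T_0$ fixed as one summand and two further summands $Q,R$ of size at least three. First I would observe that \rt{T_0+four} already pins down all polytopes $S$ with $|S|\geq 4$ and $L(T_0+S)=2$: they are equivalent (after a unimodular map fixing $T_0$) to the two explicit tetrahedra listed there, one equivalent to $T_0$ and one equivalent to $S_2$. Since property (5) of the Minkowski length forces $L(T_0+Q)=2$ and $L(T_0+R)=2$, if either $Q$ or $R$ had size $\geq 4$ it would be one of these two polytopes; I would then need to rule out $L(T_0+Q+R)=3$ for each such choice of the large summand paired with any admissible third summand. The cleanest way is to run the {\tt FindTriangles}/{\tt AddTriangleHuh}/{\tt AddTetraHuh} machinery (via {\tt GoodPolytope} to bound the search) on $T_0+Q$ for each of the finitely many $Q$ coming out of \rt{T_0+four}, checking that no admissible $R$ with $|R|\geq 3$ produces $L=3$; this reduces the problem to the case $|Q|=|R|=3$.

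For the remaining case $|Q|=|R|=3$, I would first use \rl{widthbound}(2): since $L(T_0+Q)=2$ and $L(T_0+R)=2$, both $Q$ and $R$ have width at most $2$ in the direction $e_3$ normal to the plane of $T_0$. Combined with \rl{three_segments} applied to a primitive segment of $T_0$ together with primitive segments of $Q$ and $R$ (when these span $\R^3$), and with \rp{L:T_0plusQ} describing the individual triangles $Q$ that can sit with $T_0$, the set of candidate triangles $Q,R$ becomes finite and in fact small: by \rp{L:T_0plusQ} (up to the $\mathbb{S}_3$-symmetry of $T_0$ and a lattice translation) each of $Q,R$ is either the ``flat'' triangle $[0,e_1,e_3]$-type, or one of the $[0,e_2,ae_1+be_2-e_3]$-type with $|a|,|b|\leq 3$. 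I would then enumerate pairs $(Q,R)$ from this finite list, discard those with $L(T_0+Q+R)\neq 3$ using {\tt minkthree}, and read off the surviving pairs. The expected output is exactly the three (coinciding) options in the statement, which I would verify also satisfy $L(T_0+Q+R)=3$ by exhibiting the decomposition and embedding into $3\D^3$ or a suitable box.

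The main obstacle I anticipate is the width bound in the combined direction: \rl{widthbound} controls widths relative to the plane of $T_0$, but to get a genuinely finite search for the \emph{pair} $(Q,R)$ one must also bound how $Q$ and $R$ are positioned relative to each other, not just relative to $T_0$. Here I would lean on \rl{three_segments}: any two linearly independent primitive directions among the edges of $T_0$, $Q$, $R$ together with a third must obey the width-$\leq 9$ (or $\leq 4$) bound, which, iterated over the at most three independent directions available, confines all edge directions to a bounded box; the function {\tt GoodPolytope} packages exactly this. The only subtlety is the degenerate configurations where the relevant direction vectors fail to be linearly independent — e.g. $Q$ and $R$ both being ``flat'' in the $e_3$-direction — but those reduce to the planar classification in \cite{SoSo1} (or to \rp{L:T_0plusQ} directly) and are handled by the same {\sc Magma} sweep. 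Once the search space is provably finite, the remainder is a routine, computer-assisted enumeration, so the real content is the reduction just described.
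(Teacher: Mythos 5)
Your proposal is correct and takes essentially the same route as the paper: reduce the search to a finite box via \rl{widthbound}(2), \rl{three_segments}, and the description of admissible triangles in \rp{L:T_0plusQ}, then let {\sc Magma} finish. The paper organizes things slightly differently — it starts directly with $|Q|=|R|=3$, bounds the direction vector $(p,q,r)$ of a primitive segment in $R$ by $|p|,|q|\leq 9$ (from \rl{three_segments}) and $|r|\leq 2$ (from \rl{widthbound}(2)) when $Q$ is of the non-flat type, rules that case out by computer search, and then dispatches the case $|Q|\geq 4$ with a one-line remark at the end — whereas you front-load $|Q|\geq 4$ via \rt{T_0+four}, which is a clean alternative. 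One small imprecision: {\tt GoodPolytope} as built in the paper is keyed to \rl{widthbound} and so assumes the input polytope has Minkowski length one, which $T_0+Q$ does not; the correct way to bound $R$'s directions there is exactly the iterated \rl{three_segments}/\rl{widthbound}(2) argument you describe in your last paragraph, not a direct call to {\tt GoodPolytope}. With that read, the substance matches the paper's proof.
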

\begin{proof} Suppose first that $|Q|=|R|=3$. 
In light of Proposition~\ref{L:T_0plusQ} we first assume that $Q=\begin{bmatrix}0&0&a\\0&0&b\\0&1&-1\end{bmatrix}$, where $a,b\in\Z$ satisfy $|a|,|b|\leq 3.$ Then $T_0+Q$ contains the sum of primitive segments with direction vectors 
$e_1$ and $e_3$, so by Lemma~\ref{L:three_segments} we conclude that if $(p,q,r)$ is a direction vector of a primitive segment in $R$ then $|q|\leq 9$. Similarly, considering $e_2$ in the place of $e_1$ we get $|p|\leq 9$.
Next, using part~(2) of Lemma~\ref{L:widthbound} we conclude that $|r|\leq 2$.

For $|a|,|b|\leq 3$ we next use {\sc Magma} to find all directions $(p,q,r)$ with $|p|\leq 9, |q|\leq 9, |r|\leq 2$ such that for $I=[(0,0,0), (p,q,r)]$ we have $L(T_0+Q+I)=3$ and then check whether such segments $I$ can be used to form a lattice triangle $R$ with $L(T_0+Q+R)=3$.
The computation confirms  that there are no such lattice triangles and hence by Proposition~\ref{L:T_0plusQ} we conclude that there are three options for each $Q$ and $R$, namely, 
$$\begin{bmatrix}0&1&0\\0&0&0\\0&0&1\end{bmatrix},\ \ \begin{bmatrix}0&0&0\\0&1&0\\0&0&1\end{bmatrix}\!,\  {\rm and} \ \begin{bmatrix}0&1&0\\0&1&0\\0&0&1\end{bmatrix}.
$$
Checking using {\sc Magma} whether $L(T_0+Q+R)=3$ for each of these options we conclude that up to a lattice translation $Q=R$. 
The general case with $|Q|\geq 3$ and $|R|\geq 3$ follows.
\end{proof}

\begin{Prop}\label{P:threeS_1} 
Suppose that lattice polytopes $P,Q,R\subset\R^3$ satisfy $L(P+Q+R)=3$ and $|P|=|Q|=|R|=4$. If two out of these three
polytopes are individually equivalent to $S_1$ then, up to a lattice translation, we have $P=Q=R=S_1$.
\end{Prop}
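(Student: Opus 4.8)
The plan is to reduce, using the width estimates already established, to a finite search that a {\sc Magma} computation then resolves. By the symmetry of the hypothesis under permuting $P,Q,R$, I would first assume it is $P$ and $Q$ that are equivalent to $S_1$, and, after an affine unimodular transformation, take $P=S_1=[0,e_1,e_2,e_3]$. Since $P+Q+R$ has Minkowski length $3$, this is a maximal decomposition in $P+Q+R$ itself, so the properties of Minkowski length recalled in \rs{mink-length} give $L(P+Q)=L(P+R)=L(Q+R)=2$ and $L(R)=1$. Here $Q$, being a unit $3$-simplex, is an empty lattice tetrahedron of normalized volume $1$. For $R$ I would first show it must likewise be an empty tetrahedron equivalent to $S_1$ or $S_2$: since $|R|=4$ and $L(R)=1$, if $R$ were $2$-dimensional the planar classification \cite[Th 1.4]{SoSo1} would force $R\cong T_0$, but then $L(P+R)=L(S_1+T_0)\neq 2$ by \rt{T_0+four} (since $S_1$ is equivalent to neither $T_0$ nor $S_2$), a contradiction; hence $R$ is an empty tetrahedron, and \rt{two_empty_tetrahedra} applied to the pair $(P,R)$ yields $R\cong S_1$ or $R\cong S_2$.

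Next I would confine $Q$ and $R$ to finitely many positions up to a lattice translation. For a primitive segment $I\subseteq Q$, superadditivity gives $L(S_1+I)=2$, hence $L(T+I)=2$ for each of the unit triangles $T=[0,e_1,e_2]$, $[0,e_1,e_3]$, $[0,e_2,e_3]$, $[e_1,e_2,e_3]$ sitting inside $S_1$; part~(1) of \rl{widthbound} then forces the direction vector $u=(u_1,u_2,u_3)$ of $I$ to satisfy $|u_1|,|u_2|,|u_3|\le 14$ and $|u_1+u_2+u_3|\le 14$. Since $Q$ is a unit simplex, all of its edges are primitive, so, after translating one vertex of $Q$ to the origin, $Q$ lies in the bounded region cut out by these inequalities; the function {\tt FindTetra} applied to $S_1$, with its output restricted to tetrahedra of volume $1$, lists all candidates for $Q$. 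The same bound applies to the (again primitive) edges of the empty tetrahedron $R$, so up to a lattice translation $R$ too appears among the outputs of {\tt FindTetra} applied to $S_1$, now of volume $1$ or $2$. Finally I would let {\sc Magma} run over all pairs $(Q,R)$ from these lists and test whether $L(S_1+Q+R)=3$; I expect the only surviving configurations to be those in which $Q$ and $R$ are lattice translates of $S_1$, and since $L(3\D^3)=3$ such a configuration does realize the hypothesis, so the conclusion is precisely $P=Q=R=S_1$ up to a lattice translation.

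The hard part is the completeness of this finite reduction — guaranteeing that the Minkowski-length conditions by themselves trap $Q$, and then $R$, in a bounded region. This is exactly what \rl{widthbound} delivers, provided one also exploits the ``diagonal'' unit triangle $[e_1,e_2,e_3]\subseteq S_1$ and the fact that every edge of an empty tetrahedron is primitive. A secondary subtlety is that the three pairwise conditions do not on their own rule out $R\cong S_2$, since \rt{two_empty_tetrahedra} permits $S_2$ as a summand; it is genuinely the triple condition $L(P+Q+R)=3$, checked in the last {\sc Magma} step, that forces $R\cong S_1$ and fixes $Q$ and $R$ as translates of $S_1$.
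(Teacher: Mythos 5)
Your proposal is correct and follows essentially the same route as the paper: after normalizing $P=S_1$, both reduce to a finite {\sc Magma} search over the output of {\tt FindTetra}($S_1$) (with $Q$ restricted to volume $1$) and then test $L(S_1+Q+R)=3$ directly, finding only translates of $S_1$. The extra steps you supply — deriving the width bounds from \rl{widthbound} applied to the four unit-triangle facets of $S_1$, and the a priori observation (via \rt{T_0+four} and \rt{two_empty_tetrahedra}) that $R$ must be an empty tetrahedron equivalent to $S_1$ or $S_2$ — are correct and helpfully explain why the search is exhaustive, but they are already folded into what {\tt GoodPolytope}/{\tt FindTetra} do, so they refine the exposition rather than change the argument.
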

\begin{proof}
Let $P$ and $Q$ be each equivalent to $S_1$ and assume $P=S_1$.
We run the function {\tt FindTetra} for $S_1$ and record the output. We then  cycle through $Q$ and $R$ in this output list where we only consider $Q$ of volume one
and check if $L(S_1+Q+R)=3$.  Up to a lattice translation, the only $Q$ and $R$ that satisfy this property are equal to $S_1$.
\end{proof}

\begin{Th}\label{T:main} Suppose that lattice polytopes $P,Q,R\subset\R^3$  with $|P|, |Q|, |R|\geq 4$ satisfy $L(P+Q+R)=3$.
Then, up to reordering and individual lattice translations, there are five options for $(P,Q,R)$, where in the first three options we have $|P|=|Q|=|R|=4$ and in the fourth one $|P|=5$ and $|Q|=|R|=4$:
\begin{itemize}
\item[(i)]  $P=Q=R$ and equivalent to $S_1$;
\item[(ii)] $P$ is equivalent to $S_1$, and $Q=R$ and equivalent to $S_2$;
\item[(iii)] $P=Q=R$ and equivalent to $S_2$;
\item[(iv)]  $(P,Q,R)$ is equivalent to $(E,S_2,S_2)$;
\end{itemize}
\end{Th}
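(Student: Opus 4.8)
The plan is to proceed by cases on the sizes $|P|,|Q|,|R|$, using the pairwise classification of Section~\ref{S:classification-2} together with the structural results already proved for $T_0$ and for the Fano tetrahedra $K_1,K_2$. Since $L(P+Q+R)=3$ forces $L(P+Q)=L(P+R)=L(Q+R)=2$ and $L(P)=L(Q)=L(R)=1$ (property (5) of the Minkowski length), every pair among $\{P,Q,R\}$ must be one of the pairs classified in \rt{5plus4}, \rt{5plus5}, \rt{T_0+four}, \rt{two_empty_tetrahedra}, and \rr{empty+3}. First I would record the consequence that each of $P,Q,R$ is, up to equivalence, one of $T_0$, $S_1$, $S_2$, $E$, $K_1$, $K_2$ (the entries of Table~\ref{table}), since these are exactly the polytopes of size $\geq 4$ that can appear in a pair with Minkowski length~2. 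In particular at most one summand has size $\geq 6$, and by \rt{6plus3} a size-$6$ summand cannot even coexist with a size-$3$ summand, so here no summand has size $\geq 6$; thus $|P|,|Q|,|R|\in\{4,5\}$.

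Next I would eliminate the polytopes $T_0$, $K_1$, and $K_2$ as possible summands when all three summands have $\geq 4$ lattice points. For $T_0$: by \rt{T_0+four}, the only size-$\geq 4$ polytopes $Q$ with $L(T_0+Q)=2$ are equivalent to $T_0$ or $S_2$; but \rp{T_0QR} shows that $L(T_0+Q+R)=3$ with $|Q|,|R|\geq 3$ forces $|Q|=|R|=3$, contradicting $|Q|,|R|\geq 4$. So no summand is equivalent to $T_0$. For $K_1$: by \rt{5plus4} and \rt{5plus5}, if $P$ is equivalent to $K_1$ then any size-$\geq 4$ companion in a length-2 decomposition is either one of the four empty tetrahedra $S_1$ inside $K_1$ or $K_1$ itself; I would run the {\sc Magma} function {\tt FindTetra} on $K_1$ (or argue combinatorially using \rp{Fano-K1-K2}) to check that no triple built from $K_1$ and these companions attains $L=3$, ruling out $K_1$. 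Similarly for $K_2$, using \rt{5plus4} (companion must be $S$, equivalent to $S_2$) and \rp{rule_out_K2}-style reasoning or a direct {\sc Magma} check that $L(K_2+S_2+X)\geq 4$ for every admissible $X$. That leaves only $S_1$, $S_2$, and $E$ as possible summands.

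With the summands restricted to $\{S_1,S_2,E\}$, I would enumerate the possible multisets. Since $E$ has size $5$ and, by \rt{5plus4}, the only size-$4$ companion for $E$ in a length-2 decomposition is $S_2$ (and $E$ never pairs with $S_1$, nor with another $E$ by \rt{5plus5} which lists only $(K_1,K_1)$ for two size-$5$ polytopes), the only triple containing $E$ is $(E,S_2,S_2)$, which is option~(iv); here one still must verify $L(E+S_2+S_2)=3$, and that the two $S_2$ summands are forced to be specific lattice translates — this is exactly the content of \rt{5plus4}'s rigidity clause ($P=E\Rightarrow Q=S_2$) applied twice. When $E$ does not occur, all three summands lie in $\{S_1,S_2\}$, giving four multisets: $(S_1,S_1,S_1)$, $(S_1,S_1,S_2)$, $(S_1,S_2,S_2)$, $(S_2,S_2,S_2)$. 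The multiset $(S_1,S_1,S_2)$ I would rule out: by \rp{threeS_1}, two $S_1$ summands in a length-3 decomposition force all three to be $S_1$, so $(S_1,S_1,S_2)$ is impossible. The remaining three multisets are options (i), (iii), and (ii); for each I would invoke the appropriate rigidity: \rp{threeS_1} gives (i) with $P=Q=R=S_1$; for (iii) a {\sc Magma} check (via {\tt FindTetra} on $S_2$, keeping only volume-$2$ companions) that $L(S_2+S_2+S_2)=3$ and that the configuration is unique up to translation, following the pattern of \rp{twoS_2}; and for (ii) that with one $S_1$ fixed, the two remaining summands of volume~$2$ with $L=3$ must be equal translates of $S_2$.

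The main obstacle I anticipate is the rigidity statements — not merely listing which equivalence classes occur, but showing that within a fixed triple the individual lattice translates are essentially determined (so that, e.g., the two $S_2$'s in (ii) and (iv) coincide up to translation, and there is a unique length-3 assembly). This is where the pairwise classification does not immediately suffice, and one must either push the {\sc Magma} enumeration ({\tt FindTetra} plus a volume and $L$-filter, as in \rp{twoS_2}, \rp{threeS_1}) through all the surviving cases, or supply an ad hoc geometric uniqueness argument for how three dps-polytopes can tile a length-3 region. A secondary, more bookkeeping-heavy obstacle is making the elimination of $K_1$ and $K_2$ airtight: one must be careful that a size-$5$ polytope properly containing $K_1$ or $K_2$ is not accidentally reintroduced, but this is handled by \rp{rule_out_K1} and \rp{rule_out_K2}, which already show such $P$ cannot appear even in a length-2 decomposition with a size-$\geq 3$ polytope, hence a fortiori not in a length-3 triple.
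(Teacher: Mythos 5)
Your proposal follows essentially the same strategy as the paper's proof: reduce to the pairwise classification from Section~3, rule out $T_0$ via \rp{T_0QR}, bound the size by 5 via \rt{6plus3}, then eliminate $K_1$ and $K_2$ and invoke the rigidity results (\rp{threeS_1}, \rp{twoS_2}, the rigidity clauses of \rt{5plus4}) together with a handful of {\sc Magma} checks.

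One small reorganizational difference worth noting: you first pare the candidate summands down to the Table~\ref{table} classes and then enumerate multisets, whereas the paper works case-by-case on the size triple $(|P|,|Q|,|R|) \in \{(4,4,4), (5,4,4), (5,5,4), (5,5,5)\}$; both routes use the same pairwise classification results, and your ordering is fine. Two minor imprecisions to flag. First, you cite \rp{rule_out_K2} as if it applied to $K_2$ itself; that proposition is about polytopes \emph{properly containing} $K_2$, not $K_2$ as a summand. The paper eliminates the $(K_2,S,S)$ triple by a direct {\sc Magma} check that $L(K_2+2S)>3$, which is also what your ``direct {\sc Magma} check'' fallback does, so this is not a gap, just a misattribution. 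Second, for $(K_1,S_1,S_1)$ the paper gives a clean combinatorial argument you might prefer to your proposed {\sc Magma} sweep: since $K_1$ contains two distinct copies of $S_1$, say $A$ and $B$, one has $L(A+Q+R)=L(B+Q+R)=3$ with all sizes equal to $4$, so \rp{threeS_1} forces both $A=Q=R$ and $B=Q=R$ up to translation, a contradiction; and for $(K_1,K_1,K_1)$ the paper exhibits a segment $I\subset K_1$ with $L(3I)=4$. These are tidier than the computational fallback, but your approach is not wrong — just heavier where a short argument exists.
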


\begin{proof}
Suppose first that $|P|=|Q|=|R|=4$. The case when $P=T_0$ is ruled out by Proposition~\ref{P:T_0QR}. Hence $P,Q$, and $R$ are empty tetrahedra and by Theorem~\ref{T:two_empty_tetrahedra} each of them is individually equivalent to $S_1$ or $S_2$.
If two of them are equivalent to $S_1$ then by Proposition~\ref{P:threeS_1} we have $P=Q=R=S_1$. Also,
 $L(kS_1)=k$ for $k\in\mathbb{N}$, see \rs{mink-length}.

If two are equivalent to $S_2$ then by Proposition~\ref{P:twoS_2} they are the same up to translation. Also, if all three are equivalent to $S_2$ then  $P=Q=R=S_2$. We confirmed using {\sc Magma} that $L(3S_2)=3$.

By Theorem~\ref{T:6plus3} we have $|P|,|Q|, |R|\leq 5$. If, say, $|P|=5$ then  such $P$ and options for 4-point  $Q$ and $R$ are described in Theorem~\ref{T:5plus4}. Let first $P=K_1$. Then each  $Q$ and $R$ is equivalent to $S_1$, but this then contradicts Proposition~\ref{P:threeS_1} since $K_1$ contains two different copies of $S_1$. Further, if  $P=K_2$ then $Q=R=S$, where $S$ is equivalent to $S_2$. We rule this case out by checking using {\sc Magma} that
$L(K_2+2S)>3$. If $P=E$ we get  $Q=R=S_2$ and using {\sc Magma} we confirm $L(E+2S_2)=3$.

If $|P|=|Q|=5$ and $|R|=4$, by Theorems~\ref{T:5plus4} and \ref{T:5plus5} we conclude that $|P|=|Q|=K_1$ and $R$ is one of the four copies of $S_1$  properly contained in $K_1$. 
We confirm using {\sc Magma} that $L(K_1+K_1+S_1)>3$.

Finally, if $|P|=|Q|=|R|=5$ then $P=Q=R=K_1$ and we get $L(P+Q+R)>3$ since $K_1$ contains the segment $I=[(-1,-1,-1), (1/3,1/3,1/3)]$  and $L(3I)=4$.
\end{proof}

The following statement follows immediately from \rt{main}.

\begin{Cor}\label{C:4-and-more}
 Let $P_1,\dots,P_k\subset\R^3$  be $k\geq 3$ lattice polytopes  with at least 4 lattice points each such that $L(P_1+\cdots+P_k)=k$. Then up to reordering and individual lattice translations there are at most four options for $(P_1,\dots,P_k)$:
\begin{itemize}
\item[(i)]  $P_1=\cdots=P_k$ and equivalent to $S_1$;
\item[(ii)] $P_1$ is equivalent to $S_1$, and $P_2=\cdots=P_k$ and equivalent to $S_2$;
\item[(iii)] $P_1=\cdots=P_k$ and equivalent to $S_2$;
\item[(iv)]  $(P_1,\dots,P_k)$ is equivalent to $(E,S_2,\dots,S_2)$.
\end{itemize}
\end{Cor}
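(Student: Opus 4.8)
The plan is to deduce \rc{4-and-more} directly from \rt{main} by restricting attention to triples of summands. Since $L(P_1+\cdots+P_k)=k$ and each $P_i$ is positive-dimensional, the sum $P_1+\cdots+P_k$ is itself a maximal decomposition, so by the sub-sum property of maximal decompositions (property~(5) in \rs{mink-length}) every sub-sum is maximal as well; in particular $L(P_i+P_j+P_l)=3$ for all $1\le i<j<l\le k$ and $L(P_i+P_j)=2$ for all $i\ne j$. Since every $P_m$ has at least four lattice points, \rt{main} applies to each triple $(P_i,P_j,P_l)$, and reading off its options (i)--(iv) shows that each $P_m$ is equivalent to one of $S_1$, $S_2$, or $E$ (these being the only polytopes occurring in those options; note that $T_0$ is excluded by \rt{main} itself, even though $|T_0|=4$).

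The next step is to extract the compatibility constraints contained in the list of \rt{main}: \textbf{(a)} a triple cannot contain two summands equivalent to $S_1$ together with a summand equivalent to $S_2$ or to $E$, because the only option with two copies of $S_1$ is (i), which has three copies; \textbf{(b)} a triple cannot contain a summand equivalent to $E$ together with one equivalent to $S_1$, and cannot contain two summands equivalent to $E$, because the only option with an $E$ is (iv), which has exactly one $E$ and two copies of $S_2$. As $k\ge 3$, every pair of indices lies in some triple, so (a) and (b) propagate to the full tuple and leave exactly the alternatives: either some $P_m$ is equivalent to $E$, in which case precisely one summand is of type $E$ and all the others are equivalent to $S_2$; or no summand is of type $E$, and then all summands are equivalent to $S_1$, or all are equivalent to $S_2$, or exactly one is equivalent to $S_1$ and the remaining $k-1$ are equivalent to $S_2$.

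Finally I would upgrade ``equivalent to'' into the rigid statements in (i)--(iv). If several summands are equivalent to $S_2$, then applying \rp{twoS_2} to each pair of them (valid since the corresponding sub-sum has Minkowski length $2$) shows they coincide up to a lattice translation; if all summands are equivalent to $S_1$, then \rt{main}(i) applied to any triple gives the same conclusion for them. To obtain the simultaneous normalization demanded by the statement, fix one triple, bring it into the corresponding model position ($(S_1,S_1,S_1)$, $(S_1,S_2,S_2)$, $(S_2,S_2,S_2)$, or $(E,S_2,S_2)$) via \rt{main}, and then translate each of the remaining summands onto $S_1$, $S_2$, or $E$ using the rigidity just established. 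The alternative ``all summands equivalent to $S_1$'' then yields option (i), ``all equivalent to $S_2$'' yields (iii), ``one equivalent to $S_1$ and the rest to $S_2$'' yields (ii), and ``one equivalent to $E$ and the rest to $S_2$'' yields (iv). I do not anticipate a genuine obstacle: the whole argument is finite bookkeeping over \rt{main}, and the only delicate point is this last normalization, which goes through precisely because the multiply-occurring summand ($S_1$ or $S_2$) is rigid up to translation inside the decomposition, so that normalizing it on a single triple normalizes it everywhere.
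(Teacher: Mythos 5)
Your proof is correct and takes the same approach the paper intends: the paper simply asserts that the corollary ``follows immediately from Theorem~\ref{T:main},'' and your argument is exactly the routine expansion of that one-liner, using property~(5) of the Minkowski length to reduce to triples, reading off the admissible types from Theorem~\ref{T:main}, and then invoking Proposition~\ref{P:twoS_2} and Theorem~\ref{T:main}(i) to pin down the repeated summands up to lattice translation.
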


\begin{Rem} 
Note that we only claim that whenever $L(P_1+\cdots+P_k)=k$ for $k\geq 3$, we are in one of the cases (i) through (iv), but we do not claim that in each of these cases we indeed have $L(P_1+\cdots+P_k)=k$. Although this is true in case (i), since $L(kS_1)=k$ for all $k\geq 1$, we
do not know whether this is true in cases (ii)--(iv). 
\end{Rem}


\section{Number of $\F_q$-zeros of polynomials with $L(P_f)=1$}

In this section we give an upper bound for the number of $\F_q$-zeros of Laurent polynomials $f\in\F_q[x^{\pm 1},y^{\pm 1},z^{\pm 1}]$ whose Newton polytopes $P_f$ have Minkowski length one. We start with a proposition which deals with polynomials that are linear in $z$.

\begin{Prop}\label{P:Infinite} 
Consider a polynomial $f=f_0+zf_1$ 
for some $f_0,f_1\in\F_q[x^{\pm},y^{\pm}]$ without common factors. 
Let $P$, $P_0$, and $P_1$ be the Newton polytopes of $f$, $f_0$, and $f_1$, respectively. 
Then 
$$N_f\leq (q-1)^2+\left(\Vol_3(P)-\Vol_2(P_0)-\Vol_2(P_1)\right)q-N_0-N_1,$$
where $\Vol_3$ and $\Vol_2$ denote the normalized 3- and 2-dimensional volumes, and $N_i$ is the number of $\F_q$-zeros of
$f_i$ in $(\F_q^*)^2$, for $i=0,1$.
\end{Prop}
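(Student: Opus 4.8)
The plan is to count zeros of $f=f_0+zf_1$ by fixing $(x,y)\in(\F_q^*)^2$ and asking how many $z\in\F_q^*$ make $f(x,y,z)=0$. Partition $(\F_q^*)^2$ into three sets: $A=\{(x,y): f_0=f_1=0\}$, $B=\{(x,y): f_1=0,\ f_0\neq 0\}$ (together with the symmetric $B'=\{f_0=0,\ f_1\neq 0\}$), and $C=\{(x,y): f_0\neq 0,\ f_1\neq 0\}$. For $(x,y)\in A$, every $z\in\F_q^*$ is a zero, contributing $|A|(q-1)$; but since $f_0,f_1$ have no common factor, $A$ is contained in the intersection of the two plane curves, so $|A|$ is small (bounded by a B\'ezout-type / mixed-volume estimate). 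For $(x,y)\in B$ or $B'$, one of $f_0,f_1$ vanishes and the other does not, so $f(x,y,z)=0$ has no solution $z\in\F_q^*$, contributing $0$. For $(x,y)\in C$, the equation $f_0+zf_1=0$ has the unique solution $z=-f_0/f_1$, which lies in $\F_q^*$ iff $f_0\neq 0$ — already guaranteed — so each such point contributes exactly $1$. Hence $N_f\le |A|(q-1)+|C|$, and $|C|=(q-1)^2-|A|-|B|-|B'|=(q-1)^2-|A|-(N_1-|A|)-(N_0-|A|)=(q-1)^2+|A|-N_0-N_1$. Therefore $N_f\le (q-1)^2+|A|\,q-N_0-N_1$ (using $|A|(q-1)+|A|=|A|q$).

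It then remains to show $|A|\le \Vol_3(P)-\Vol_2(P_0)-\Vol_2(P_1)$. This is the main obstacle. The natural tool is the BKK bound (\rs{mix-vol}): the number of common zeros of $f_0$ and $f_1$ in $(\F_q^*)^2$ is at most the mixed volume $V(P_0,P_1)$ (this bound over $\F_q$ follows because $\bar\F_q$-solutions are finite when $f_0,f_1$ share no component, and the finite-field count is bounded by the count over $\bar\F_q$, which is $\le V(P_0,P_1)$ by BKK). So $|A|\le V(P_0,P_1)$. Now invoke \rl{mix}: since $P$ is the convex hull of $(P_0\times\{0\})\cup(P_1\times\{1\})$ — which is exactly the Newton polytope of $f=f_0+zf_1$ — we have $\Vol_3(P)=\Vol_2(P_0)+V(P_0,P_1)+\Vol_2(P_1)$, i.e. $V(P_0,P_1)=\Vol_3(P)-\Vol_2(P_0)-\Vol_2(P_1)$. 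Combining, $|A|\le \Vol_3(P)-\Vol_2(P_0)-\Vol_2(P_1)$, and plugging into the inequality above gives the claim.

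One subtlety to address carefully: the identification of $P_f$ with $\conv\bigl((P_0\times\{0\})\cup(P_1\times\{1\})\bigr)$ requires that $f_0$ contributes the ``bottom'' slice and $zf_1$ the ``top'' slice, with no cancellation; since $f_0$ and $f_1$ involve only $x,y$, the monomials of $f_0$ and of $zf_1$ are disjoint (distinct $z$-degrees), so no cancellation occurs and the support of $f$ is $\supp(f_0)\times\{0\}\cup\supp(f_1)\times\{1\}$, whose convex hull is the stated $P$. A second point: if $f_1=0$ then $f=f_0$ depends only on $x,y$ and the statement degenerates (the bound still holds trivially, as $N_f=N_0(q-1)$ and $\Vol_3(P)=0$, $\Vol_2(P_1)=0$, $N_1=q-1$ or one interprets it suitably); I would either exclude this case at the outset by assuming $f_1\neq 0$ (and symmetrically $f_0\neq 0$), or note it separately. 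Assuming both $f_0,f_1$ nonzero, the argument above is complete; the only real content is the BKK estimate on $|A|$ plus the volume identity from \rl{mix}.
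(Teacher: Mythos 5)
Your proof is correct and follows essentially the same route as the paper: partition $(\F_q^*)^2$ according to which of $f_0,f_1$ vanish, count the $z$-fibers over each class, bound the common-zero set by the mixed volume $V(P_0,P_1)$ via the BKK bound, and convert $V(P_0,P_1)$ to $\Vol_3(P)-\Vol_2(P_0)-\Vol_2(P_1)$ using Lemma~\ref{L:mix}. Your added remarks on why $P_f=\conv\bigl((P_0\times\{0\})\cup(P_1\times\{1\})\bigr)$ and on the degenerate case $f_1=0$ are sound but not needed; the paper leaves these implicit.
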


\begin{pf} Let $Z_{i}\subset(\F_q^*)^2$ be the set of $\F_q$-zeros of $f_i$, $i=0,1$, and let
$Z_{0,1}=Z_0\cap Z_1$.
Note that for every $(x,y)\in Z_{0,1}$ and any $z\in\F_q^*$ the triple $(x,y,z)$ is a zero of $f$. Furthermore, for any $(x,y)\in(\F_q^*)^2\setminus Z_{0}\cup Z_{1}$ there exists a unique $z\in\F_q^*$ such that the triple $(x,y,z)$ is a zero of $f$. Any other triple $(x,y,z)\in (\F_q^*)^3$ cannot be a zero of $f$. Therefore,
$$N_f=|Z_{0,1}|(q-1)+(q-1)^2-|Z_0|-|Z_1|+|Z_{0,1}|.$$

Our next step is to bound $|Z_{0,1}|$ from above by the mixed volume $V(P_0,P_1)$. 
Let $C_i$ be the algebraic curve defined by $f_i=0$ in the toric compactification of $(\bar\F_q^*)^2$ corresponding to $P_0+P_1$. 
Note that since $f_0$ and $f_1$ do not have common factors, 
the intersection $ C_0\cap  C_1$ is zero-dimensional. 
Therefore,  by the BKK bound \cite[Th B]{Be} we have
 $$|Z_{0,1}|\leq | C_0\cap  C_1|\leq V(P_0,P_1).$$
 It remains to use the formula in \rl{mix} which relates the mixed volume $V(P_0,P_1)$ and the normalized volume of $P$.
\end{pf}

The following result appears in Whitney's PhD thesis \cite{Josh}. Its proof is based on the Grothendieck-Lefschetz trace formula and cohomology computation for hypersurfaces $H_f$ in a toric variety $X_P$, where $P$ is a lattice polytope from one of the 108 classes
of polytopes with $L(P)=1$ and lattice width greater than one, and $f$ is a Laurent polynomial with Newton polytope $P$.

\begin{Th}\cite[Th 4.30]{Josh}\label{T:JoshFinite} 
Assume $\cchar(\F_q)>41$. Let $f$ be a Laurent polynomial whose Newton polytope 
$P$ satisfies $L(P)=1$ and ${\rm w}(P)>1$. Then 
$$N_f\leq (q-1)^2+(\Vol_3(P)-F(P)/2)q+F(P)/2,$$
where $\Vol_3(P)$ is the normalized 3-dimensional volume and $F(P)$ is the number of facets of~$P$.
 \end{Th}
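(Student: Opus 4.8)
The plan is to reduce \rt{JoshFinite} to the already-established bound of \rp{Infinite} by exploiting the structure theory of the 108 dps-classes with $\w(P)>1$. The key observation is that \rp{Infinite} gives, for $f=f_0+zf_1$ with $\gcd(f_0,f_1)=1$,
$$N_f\le(q-1)^2+\bigl(\Vol_3(P)-\Vol_2(P_0)-\Vol_2(P_1)\bigr)q-N_0-N_1,$$
so if we can always arrange a coordinate $z$ in which $f$ is linear, with $P_0$ and $P_1$ the two ``slices'' of $P$ at $z$-values $0$ and $1$, then it suffices to check the inequality
$$\Vol_2(P_0)+\Vol_2(P_1)\ge F(P)/2\qquad\text{and}\qquad N_0+N_1\ge -F(P)/2,$$
the latter being automatic since $N_i\ge 0$ and $F(P)\ge 0$; in fact we only need $\Vol_2(P_0)+\Vol_2(P_1)+ (\text{correction}) \ge F(P)/2$ after accounting for the $q$-independent term. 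So the real content is a purely combinatorial estimate comparing the areas of the two facet-slices of a dps-polytope of width $>1$ with its facet count.

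First I would recall Whitney's structural result that every $L(P)=1$ polytope of lattice width greater than one has lattice width \emph{exactly} two in some primitive direction $v$ (this is part of the classification, since all 108 classes have $\w(P)\in\{2\}$ or are handled by the width-one families excluded here; more precisely one uses that these polytopes sit between two parallel lattice planes at distance $2$). Choose coordinates so that $v=e_3$ and $P$ lies between $z=0$ and $z=2$. Write $P_0=P\cap\{z=0\}$, $P_{1}=P\cap\{z=1\}$, $P_2=P\cap\{z=2\}$ for the three slices. Then $f=f_0+zf_1+z^2f_2$ is quadratic in $z$, not linear, so \rp{Infinite} does not apply directly. The fix is the substitution $z\mapsto z$ together with the observation that for a polynomial of the shape above the number of $\F_q$-zeros is governed by: for each $(x,y)$, the number of $z\in\F_q^*$ solving a quadratic, which is $\le 2$ when the quadratic is nondegenerate and $= q-1$ on the locus where all three coefficients vanish. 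Running the same inclusion–exclusion as in \rp{Infinite} (now with three curves $f_0=0$, $f_1=0$, $f_2=0$) and using the BKK bound on pairwise intersections, one gets
$$N_f\le 2(q-1)^2 + \bigl(\text{mixed-volume terms}\bigr)q + (\text{lower order}),$$
but this has leading coefficient $2$, not $1$, so this naive route is too lossy.

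Consequently the better route, and I believe the one Whitney takes, is genuinely cohomological: realize $X_f=\{f=0\}$ as a hypersurface in the projective toric $3$-fold $X_P$ and apply the Grothendieck–Lefschetz trace formula, $\#X_f(\F_q)=\sum_i(-1)^i\Tr(\Phi^*\mid H^i_c(X_f))$. The dimensions of the relevant cohomology groups of a hypersurface in a toric variety are controlled by the mixed Hodge numbers, which in turn are combinatorial data of $P$ (Danilov–Khovanskii): the middle primitive cohomology has dimension related to the number of interior lattice points of dilates of $P$, here tiny because $L(P)=1$ forces $|P\cap\Z^3|\le 8$. Bounding the ``error'' eigenvalues of $\Phi^*$ by $q$ (deep part, using purity / Deligne's bounds, which is where $\cchar(\F_q)>41$ enters — it guarantees $X_P$ and $X_f$ are sufficiently well-behaved, e.g.\ $f$ nondegenerate with respect to its Newton polytope so that $X_f$ has at worst the expected quotient singularities and the weight filtration behaves), one arrives at the stated $(q-1)^2 + (\Vol_3(P)-F(P)/2)q + F(P)/2$, the coefficient $\Vol_3(P)-F(P)/2$ being exactly the relevant Hodge/Euler-characteristic count.

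The main obstacle is the cohomological bookkeeping: one must (a) verify that for $\cchar(\F_q)>41$ a general $f$ with Newton polytope in a fixed dps-class is Newton-nondegenerate, so that $X_f$ is quasismooth and the Danilov–Khovanskii description of $H^\bullet_c(X_f)$ applies; (b) compute, class by class among the 108, the Euler-characteristic-type invariant $\Vol_3(P)-F(P)/2$ and match it to the dimension of the piece of cohomology of weight $\le 2$ (the pieces contributing the $q$-term), and (c) control the contribution of the torus-invariant boundary strata of $X_P$, which is a matter of the combinatorics of the faces of $P$ and accounts for the $(q-1)^2$ and the constant $F(P)/2$. Steps (b) and (c) are finite checks once the classification of the 108 polytopes is in hand; step (a) is the genuine arithmetic-geometry input and the place where the characteristic hypothesis is essential. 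An alternative, more elementary-looking argument for the infinite width-one families (not covered by this theorem) is carried out separately in \rt{VolumeBound}, but for the width-$>1$ classes the cohomological argument seems unavoidable, which is presumably why the authors simply cite \cite{Josh} here.
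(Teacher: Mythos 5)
The paper does not prove this theorem; it cites it directly from Whitney's thesis and only describes the method in one sentence (Grothendieck--Lefschetz trace formula and cohomology of the hypersurface $X_f$ in the toric variety $X_P$). Your proposal correctly diagnoses that the elementary slicing route via \rp{Infinite} does not apply when $\w(P)>1$, and your eventual pivot to the cohomological argument (Lefschetz trace formula, Danilov--Khovanskii description of the mixed Hodge structure, Deligne's weight bounds, with the characteristic hypothesis ensuring Newton-nondegeneracy so that $X_f$ has controlled singularities) matches the approach the paper attributes to Whitney. One small caveat: your intermediate claim that every dps-polytope of width $>1$ has lattice width exactly two is not established anywhere in the paper and is not obviously true; fortunately you abandon that route before relying on it, so it does not affect the correctness of your final argument.
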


\begin{Rem}\label{R:char} 
Whitney's classification is split according to the number of vertices and the number of interior lattice points of $P$, rather than the lattice width of $P$. In fact, the ``finite cases'' in \cite[Th 4.30]{Josh} consist of 109 classes of lattice polytopes, 1 of which has  lattice width one (the class containing $T_1$, see \rs{T0-facet}) and the other 108 have lattice width greater than one.
\end{Rem}

\begin{Rem}\label{R:char41} The condition on the field characteristic ensures that the corresponding hypersurface $H_f$ has at worst isolated singularities, \cite[Cor 4.18]{Josh}. For specific polytopes $P$ this condition can be relaxed. For example, when $P$ equals $K_1$ or  $K_2$ it is enough to require $\cchar(\F_q)\neq 2,3$.
\end{Rem}

\begin{Th}\label{T:VolumeBound} 
Assume $\cchar(\F_q)>41$. Let $f$ be a Laurent polynomial whose Newton polytope 
$P$ is 3-dimensional and has Minkowski length one. Then 
$$N_f\leq (q-1)^2+(\Vol_3(P)-2)q+2,$$
where $\Vol_3(P)$ is the normalized 3-dimensional volume of $P$.
\end{Th}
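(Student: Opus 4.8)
The plan is to reduce the general statement to the two cases already at our disposal: Whitney's \rt{JoshFinite} for polytopes of lattice width greater than one, and \rp{Infinite} together with the classification of width-one polytopes for the remaining case. First I would recall that $P$ has $L(P)=1$, so by the classification of dps-polytopes (Whitney/Blanco--Santos), either $\w(P)>1$, in which case $P$ belongs to one of the 108 finite classes, or $\w(P)=1$, in which case $P$ sits (after an affine unimodular transformation, which preserves $N_f$, $\Vol_3$, and $L$) between two parallel lattice planes, say $z=0$ and $z=1$. In the first case, \rt{JoshFinite} gives $N_f\le (q-1)^2+(\Vol_3(P)-F(P)/2)q+F(P)/2$, and since $P$ is $3$-dimensional it has $F(P)\ge 4$ facets; as the coefficient of $q$ is $\Vol_3(P)-F(P)/2 \le \Vol_3(P)-2$ and the constant term $F(P)/2$ increases while $N_f$'s bound as a whole must be checked — here one notes that replacing $F(P)/2$ by $2$ only weakens the bound provided $\Vol_3(P)-F(P)/2)q + F(P)/2 \le (\Vol_3(P)-2)q + 2$, i.e. $(F(P)/2-2)(q-1)\ge 0$, which holds since $q\ge 2$ and $F(P)\ge 4$. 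So the finite-width case is immediate.

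For the width-one case, after the normalization above write $f = f_0 + z f_1$ with $f_0, f_1 \in \F_q[x^{\pm},y^{\pm}]$, where $P_0 = P_{f_0}$ and $P_1 = P_{f_1}$ are the ``slices'' of $P$ at $z=0$ and $z=1$ respectively. The key point is that $f_0$ and $f_1$ have no common factor: a common factor $g$ of positive dimension would give $g\mid f$, and then $P_g$ would be a non-trivial lattice summand of $P$ lying in the plane $z=0$; combined with a primitive segment in the $e_3$ direction inside $P$ this would produce a Minkowski sum of length $2$ inside $P$, contradicting $L(P)=1$. (One has to be slightly careful if $f_0$ or $f_1$ is a monomial or zero — but if $f_1=0$ then $P$ is $2$-dimensional, excluded, and if $f_0$ or $f_1$ is a monomial the no-common-factor condition is automatic after dividing out, with $N_0$ or $N_1$ equal to zero.) Then \rp{Infinite} applies and gives
$$N_f\le (q-1)^2+\bigl(\Vol_3(P)-\Vol_2(P_0)-\Vol_2(P_1)\bigr)q-N_0-N_1.$$

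To finish, I would show $\Vol_2(P_0)+\Vol_2(P_1)\ge 2$ and $N_0+N_1\ge 0$ with the right slack, i.e. that the displayed bound is at most $(q-1)^2+(\Vol_3(P)-2)q+2$. Since $N_0, N_1\ge 0$, it suffices to check $\Vol_2(P_0)+\Vol_2(P_1)\ge 2$ and, when equality holds, that $-N_0-N_1\le 2 - 0\cdot q$, which needs a moment's thought. Here $P = \conv((P_0\times\{0\})\cup(P_1\times\{1\}))$ is $3$-dimensional, so by \rl{mix}, $\Vol_3(P)=\Vol_2(P_0)+V(P_0,P_1)+\Vol_2(P_1)$, and $3$-dimensionality forces $P_0+P_1$ to be $2$-dimensional, hence at least one of $P_0, P_1$ is $2$-dimensional (a segment plus a segment in the same direction stays $1$-dimensional, and a point plus a segment is $1$-dimensional); say $\Vol_2(P_0)\ge 1$. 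If also $\Vol_2(P_1)\ge 1$ we are done since then the $q$-coefficient is $\le \Vol_3(P)-2$ and the constant term is $-N_0-N_1\le 0 < 2$. The genuinely delicate subcase is $\Vol_2(P_1)\in\{0\}$, i.e. $P_1$ is a segment or a point: then I would argue that $\Vol_2(P_0)\ge 2$ — because if $\Vol_2(P_0)=1$, $P_0$ is a unit triangle, $P_1$ a segment or point, and a short case analysis (using $L(P)=1$ to bound how $P_1$ can sit over the unit triangle, essentially the content of \rl{widthbound}(1) and the surrounding classification) shows $\Vol_3(P)\le 3$ and $F(P)\le 4$, so $P$ is one of $S_1, S_2, E$; for each of these, bounding $N_f$ directly (as in \rex{two}) gives exactly $(q-1)^2+(\Vol_3(P)-2)q+2$. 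Thus in all cases the bound holds.

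The main obstacle I anticipate is this last subcase: handling width-one polytopes whose $z=1$ slice is low-dimensional, where \rp{Infinite} alone does not immediately give the factor-of-$2$ improvement on the $q$-coefficient, so one must either invoke the explicit classification of the small width-one polytopes ($S_1$, $S_2$, $E$, and the infinite families built from them) or give a direct elementary count of $N_f$ for those. Everything else — the finite-width case via \rt{JoshFinite}, and the no-common-factor argument — is routine once the width dichotomy and the normalization are in place.
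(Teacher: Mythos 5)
Your overall plan matches the paper's: split on lattice width, dispatch the width $>1$ case via \rt{JoshFinite} and the observation $F(P)\ge 4$ (your algebra there is correct), and handle width one via \rp{Infinite}. Your no-common-factor argument is phrased differently from the paper's (the paper argues that a common summand of $P_0$ and $P_1$ would force a parallelogram inside $P$, while you factor $f=gh$ and note $P_g+P_h$ has two positive-dimensional summands), but both are valid.

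The problem is the width-one case analysis. You correctly deduce that $P_0+P_1$ is $2$-dimensional, but the next step — ``hence at least one of $P_0,P_1$ is $2$-dimensional'' — is false. Two non-parallel primitive segments sum to a $2$-dimensional parallelogram, and this is exactly what happens for $S_2$: after normalizing so that $S_2=[e_1,e_2,e_3,e_1+e_2+e_3]$ has width one in the $e_1$-direction, the slices $P_0=[e_2,e_3]$ and $P_1=[e_1,e_1+e_2+e_3]$ are both segments. So the case $\Vol_2(P_0)=\Vol_2(P_1)=0$ genuinely occurs and your argument never treats it. It is also the extremal case: there the inequality $(2-\Vol_2(P_0)-\Vol_2(P_1))q\le N_0+N_1+2$ becomes $2q\le 2(q-1)+2$, an equality, so it cannot be waved away.

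Your fallback in the delicate subcase also does not work. When $P_0$ is a unit triangle and $P_1$ is a point or segment, you assert $\Vol_3(P)\le 3$ and conclude $P\in\{S_1,S_2,E\}$; but $P$ can be any empty tetrahedron $T_{a,b}$ of White's theorem (take $P_0=[0,e_1,e_2]$, $P_1=\{(a,b)\}$), which has volume $a+b$ and $L(P)=1$, so this is an infinite family with unbounded volume. Your appeal to \rl{widthbound} is misplaced: that lemma constrains pairs $(P,I)$ with $L(P+I)=2$, not single width-one polytopes with $L(P)=1$. The paper avoids all of this by noting that each $P_i$ sits inside $P$ (after a shift), hence $L(P_i)\le 1$, so each $P_i$ is a point, a primitive segment, a unit triangle, or a copy of $T_0$ by the planar classification; it then tabulates the exact values of $\Vol_2(P_i)$ and $N_i$ (respectively $0,0,1$ and $0,q-1,q-2$, with the $T_0$ case trivial since $\Vol_2=3$) and checks the inequality for each pair, including the segment--segment pair you omitted.
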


\begin{pf} This follows from \rp{Infinite} and \rt{JoshFinite}. Indeed, if $P$ has lattice width greater than one then
the bound  follows directly from \rt{JoshFinite} as $P$ has at least 4 facets. Thus we may assume that $P$ has lattice width one.
After an affine unimodular transformation which corresponds to a monomial change of variables in $f$ we may assume that
$P$ and $f$ have the form as in \rp{Infinite}. If $f_0$ and $f_1$ had a common factor then 
$P_0$ and $P_1$ would contain a common Minkowski summand. But then 
$P_0$ and $P_1$ would have a pair of parallel sides which forces
$P$ to contain a parallelogram. This contradicts the assumption $L(P)=1$. Therefore,
$f_0$ and $f_1$ do not have common factors and the conditions of \rp{Infinite} are satisfied.
It remains to show that the bound in \rp{Infinite} is no greater than $(q-1)^2+(\Vol_3(P)-2)q+2$,
i.e.,
\begin{equation}\label{e:prop-6.1}
\left(2-\Vol_2(P_0)-\Vol_2(P_1)\right)q\leq N_0+N_1+2.
\end{equation}

First, each $P_i$ is either a point or one of the three polytopes of Minkowski length one: a primitive segment, a unimodular
triangle, or equivalent to $T_0$. If at least one of the $P_i$ is equivalent to $T_0$ then \re{prop-6.1}
is trivial, as $\Vol_2(T_0)=3$.
We have the following table of values of $\Vol_2(P_i)$ and $N_i$ in the other three cases.

\renewcommand{\arraystretch}{1.2}
\begin{center}
\begin{tabular}{|l|c|c|}
\hline
$P_i$ & $\Vol_2(P_i)$ & $N_i$\\
\hline
\hline
{\rm point}& 0 & 0\\
\hline
{\rm  primitive segment}& 0 & $q-1$\\
\hline
{\rm unit triangle }& 1 & $q-2$\\
\hline
\end{tabular}
\end{center}
It is straightforward to check that \re{prop-6.1} holds for all possible pairs $(P_0,P_1)$
with parameters as in the above table. Note that since $P$ is 3-dimensional the pairs (point, point), (point, segment), and (segment, point) cannot happen.
\end{pf}

\begin{Prop}\label{P:special} 
Let $P$ be the Newton polytope of a Laurent polynomial $f\in\F_q[x^{\pm 1},y^{\pm 1},z^{\pm 1}]$. 
Assume $\cchar(\F_q)\neq 2,3$. We have the following upper bounds on $N_f$
\renewcommand{\arraystretch}{1.2}
\begin{center}
\begin{tabular}{|l|c|}
\hline
$P$ & {\rm upper bound on} $N_f$\\
\hline
\hline
{\rm  primitive segment}& $(q-1)^2$\\
\hline
{\rm unit triangle  }&  $(q-1)(q-2)$\\
\hline
{\rm unit 3-simplex  }&  $(q-1)^2-q+2$\\
\hline
{\rm equivalent to $T_0$  }&  $(q-1)(q+\floor{2\sqrt{q}}-2)$\\
\hline
{\rm equivalent to $S_2$   }&  $(q-1)^2+2$\\
\hline
{\rm equivalent to $E$  }&  $(q-1)^2+3$\\
\hline
{\rm equivalent to $K_1$   }&  $(q-1)^2+2q+2$\\
\hline
{\rm equivalent to $K_2$  }&  $(q-1)^2+3q+2$\\
\hline
\end{tabular}
\end{center}
\end{Prop}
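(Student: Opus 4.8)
The plan is to verify the eight rows of the table one group at a time, using throughout that $N_f$ is an $\AGL(3,\Z)$-invariant (\rs{monomial-change}), so that in each case $P$ may be replaced by a convenient representative of its equivalence class.

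\textbf{Low-dimensional cases.} For a primitive segment, move $P$ to the $x$-axis: then $f=c_0+c_1x$ with $c_0,c_1\neq0$ (both exponents are vertices of $P_f$), the only torus zero is $x=-c_0/c_1$, and $N_f=(q-1)^2$. For the remaining two-dimensional types, move $P$ into the plane $\{z=0\}$; then $f\in\F_q[x^{\pm1},y^{\pm1}]$, the coordinate $z$ is free, and $N_f=(q-1)N'_f$ with $N'_f=|\{(x,y)\in(\F_q^*)^2:f(x,y)=0\}|$. For a unit triangle, $f=c_0+c_1x+c_2y$ with all $c_i\neq0$, so solving for $y$ gives $N'_f=q-2$, which is the second row. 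For $P=T_0$, note that $L(T_0)=1$ forces $f$ to be absolutely irreducible, so the Zariski closure $C$ of $\{f=0\}$ in the toric surface $X_{T_0}$ is an absolutely irreducible curve, and its arithmetic genus equals the number of interior lattice points of $T_0$, namely $1$; the Hasse--Weil estimate recalled in the introduction then gives $|C(\F_q)|\leq q+1+\floor{2\sqrt q}$. Since $N'_f$ is $|C(\F_q)|$ minus the number of $\F_q$-points of $C$ on the toric boundary, I would check that exactly three of the latter exist: each edge of $T_0$ is primitive, so $C$ meets each of the three boundary divisors in a degree-one, hence single and $\F_q$-rational, point, and these three points are pairwise distinct and not torus-fixed, because the three vertex coefficients of $f$ are nonzero and so $C$ passes through none of the torus-fixed points of $X_{T_0}$. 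Hence $N'_f\leq q+\floor{2\sqrt q}-2$, which is the $T_0$ row. (Alternatively, when $\cchar\F_q\neq2$ one may set $t=xy$, reduce $N'_f$ to $q-2+\sum_{t\in\F_q}\chi(g(t))$ for an explicit cubic $g$ and the quadratic character $\chi$, and bound the character sum by $\floor{2\sqrt q}$ via the Hasse bound for $y^2=g(t)$.)

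\textbf{Width-one $3$-polytopes and Fano tetrahedra.} For the unit $3$-simplex $\D^3$, and for $S_2$ and $E$, a monomial change of variables places $P$ between the planes $\{z=0\}$ and $\{z=1\}$, so $f=f_0+zf_1$ with $f_0,f_1\in\F_q[x^{\pm1},y^{\pm1}]$, and I would apply \rp{Infinite}. In each case every exponent of $f_0$ and $f_1$ is a vertex of $P$, so all displayed coefficients are nonzero, and $f_0,f_1$ share no factor (each is irreducible and the two are visibly non-associate): for $\D^3$, $f_0$ is affine-linear and $f_1=c_3$ is a nonzero constant; for $S_2$, $f_0=c_1x+c_2y$ and $f_1=c_3+c_4xy$; for $E$, $f_0=c_0+c_1x+c_2y$ and $f_1=c_3+c_4xy$. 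Substituting into \rp{Infinite} the volumes $\Vol_3(P)\in\{1,2,3\}$, the areas $\Vol_2(P_0)\in\{1,0,1\}$ and $\Vol_2(P_1)=0$, and the immediate counts $N_0\in\{q-2,q-1,q-2\}$, $N_1\in\{0,q-1,q-1\}$, gives exactly $(q-1)^2-q+2$, $(q-1)^2+2$, and $(q-1)^2+3$. Finally, $K_1$ and $K_2$ have Minkowski length one and lattice width greater than one, so they belong to the finite classes covered by Whitney's \rt{JoshFinite}, whose characteristic hypothesis relaxes to $\cchar\F_q\neq2,3$ for these two polytopes by \rr{char41} --- precisely the standing assumption. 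Both are tetrahedra, so $F(P)=4$, with $\Vol_3$ equal to $4$ and $5$, and \rt{JoshFinite} returns $(q-1)^2+2q+2$ and $(q-1)^2+3q+2$, completing the table.

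The step I expect to be delicate is the $T_0$ row: keeping the $\floor{2\sqrt q}$ term sharp forces one to apply Hasse--Weil to the correct (arithmetic-genus-one) curve and to check that \emph{exactly} three $\F_q$-points of $C$ --- no more and no fewer --- lie on the toric boundary, which is exactly where primitivity of the edges of $T_0$ and non-vanishing of its vertex coefficients come in. With \rp{Infinite} and \rt{JoshFinite} in hand, the other seven rows are routine.
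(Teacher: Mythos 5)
Your proof is correct, and your route is essentially the same as the paper's: the first two rows by direct counting after a monomial change of variables, the $T_0$ row by the Hasse--Weil bound for the arithmetic-genus-one curve on $X_{T_0}$ (this is the argument of \cite[Prop.~2.1]{SoSo1}, which the paper cites), the $S_2$ and $E$ rows from \rp{Infinite}, and the $K_1,K_2$ rows from \rt{JoshFinite} together with \rr{char41}. The only minor deviation is the unit $3$-simplex, which the paper treats as a third ``standard'' direct computation (bringing $f$ to $c_0+c_1x+c_2y+c_3z$ and counting), whereas you run it through \rp{Infinite}; both yield $(q-1)^2-q+2$. Your expansion of the $T_0$ row — identifying exactly three $\F_q$-rational boundary points via primitivity of the edges and nonvanishing of the vertex coefficients, and noting that $C$ avoids the torus-fixed points — is a correct spelling-out of the cited [SoSo1, Prop.~2.1] argument and addresses precisely the point one must be careful about to keep the $\floor{2\sqrt q}$ term sharp.
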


\begin{pf}
The first three bounds are standard and, in fact, are exact values of $N_f$. This is because 
after a monomial change of variables (which preserves $N_f$) the polynomial $f$ can be brought to
the form $c_0+c_1x$, $c_0+c_1x+c_2y$, and $c_0+c_1x+c_2y+c_3z$, respectively, for some 
$c_i\in\F_q^*$.  The fourth bound follows directly from the Hasse-Weil bound as in  \cite[Proposition 2.1]{SoSo1}. The fifth and the sixth bounds follow from \rp{Infinite} as both $S_2$ and $E$ have width one. Finally, the seventh and the eighth bounds follow from \rt{JoshFinite} and \rr{char41} using 
$\Vol_3(K_1)=4$ and $\Vol_3(K_2)=5$.
\end{pf}

\section{Number of $\F_q$-zeros of polynomials in $\cL_P$}
In this section we come to the main results of the paper which provide a solution to Problems~\ref{Pr:all} and~\ref{Pr:max} for $n=3$.
As before, we fix a lattice polytope $P\subset\R^3$ and consider the space $\cL_P$ of Laurent polynomials  whose Newton polytopes are 
contained in $P$.  Let $N_P$ denote  the largest number of $\F_q$-zeros in $(\F_q^*)^3$ over all non-zero $f\in\cL_P$, i.e.
$$N_P=\max\{ N_f : 0\neq f\in\cL_P\}.$$
Our goal is to come up with a bound for $N_P$ which depends on $q$ and the Minkowski length of $P$.
As in \rpr{max}, we first look at polynomials in $\cL_P$ that have the largest number of absolutely irreducible factors.

\begin{Th}\label{T:max-a} Assume $\cchar(\F_q)>41$. 
Let $P\subset\R^3$ be a lattice polytope of Minkowski length $L$.
Consider $f\in\cL_P$ with the largest number of absolutely irreducible factors. Let $k$ be the number of those factors  with $4$ or more monomials. Then
\begin{enumerate}
\item if $k=0$ then $N_f\leq L\,(q-1)^2$;
\item if $k=1$ then 
\begin{enumerate}
\item $N_f\leq L\,(q-1)^2+(q-1)(\floor{2\sqrt{q}}-1)$, if $f$ has a factor with Newton polytope equivalent to $T_0$, 
\item $N_f\leq L\,(q-1)^2+(\Vol_3(P)-3L+1)q+2$, otherwise;
\end{enumerate}
\item if $k=2$ then $N_f\leq L\,(q-1)^2+2(q-1)(\floor{2\sqrt{q}}-1)$;
\item if $k\geq 3$ then $N_f\leq L\,(q-1)^2+2k+1\leq L\,(q-1)^2+2L+1$.
\end{enumerate}
\end{Th}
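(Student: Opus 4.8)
The plan is to induct on the number $L$ of absolutely irreducible factors of $f$, using the combinatorial classification of maximal decompositions from Sections 3 and 4 together with the single-factor bounds collected in \rp{special} and \rt{VolumeBound}. Write $f=f_1\cdots f_L$ with each $f_i$ absolutely irreducible and $P_i=P_{f_i}$, so that $P_1+\cdots+P_L\subseteq P$ is a maximal decomposition. The zero set of $f$ in $(\F_q^*)^3$ is the union of the zero sets of the $f_i$, so $N_f\le\sum_{i=1}^L N_{f_i}$; the whole argument amounts to bounding this sum by choosing the right bound for each $N_{f_i}$ from \rp{special} and then summing. The factors with at most $3$ monomials have $L(P_i)=1$ with $P_i$ a primitive segment, a unit triangle, or a unit $3$-simplex; each contributes at most $(q-1)^2$ by \rp{special}. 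So if $k=0$ we immediately get $N_f\le L(q-1)^2$, which is case (1).

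For $k\ge 1$ the strategy is to peel off the ``big'' factors (those with $\ge 4$ monomials, hence $|P_i|\ge 4$) and invoke \rc{4-and-more} and the pairwise classification (\rt{5plus4}, \rt{T_0+four}, etc.) to pin down which polytopes can occur as $P_i$. First I would handle $k\ge 3$: by \rc{4-and-more} the only possibilities for the big factors are $(S_1,\dots,S_1)$, $(S_1,S_2,\dots,S_2)$, $(S_2,\dots,S_2)$, or $(E,S_2,\dots,S_2)$, and \rp{special} gives $N_{f_i}\le(q-1)^2+2$ for $S_2$, $\le(q-1)^2+3$ for $E$, and $=(q-1)^2-q+2\le(q-1)^2$ for $S_1$ (a unit $3$-simplex). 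Summing over all $L$ factors, the ``+'' terms from the big factors total at most $2k+1$ (the worst case being one $E$ contributing $3$ and $k-1$ copies of $S_2$ each contributing $2$, giving $3+2(k-1)=2k+1$), which yields case (4); the refinement $2k+1\le 2L+1$ is trivial since $k\le L$.

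For $k=1$, the single big factor $f_1$ has $P_1$ with $L(P_1)=1$ and $|P_1|\ge 4$. If $\w(P_1)>1$ — equivalently $P_1$ is in one of Whitney's $108$ classes — then $N_{f_1}\le(q-1)^2+(\Vol_3(P_1)-2)q+2$ by \rt{VolumeBound}; the key observation is that since $P_1+(\text{sum of the other }L-1\text{ positive-dimensional summands})\subseteq P$, \rl{VolumeBound} applied repeatedly gives $\Vol_3(P_1)\le\Vol_3(P)-3(L-1)$, so $N_{f_1}\le(q-1)^2+(\Vol_3(P)-3L+1)q+2$, and adding $(L-1)(q-1)^2$ from the remaining factors gives case (2b). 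If instead $P_1$ has width one and $|P_1|\ge 4$, the relevant possibilities are $P_1$ equivalent to $T_0$, $S_2$, or $E$ (the $2$-dimensional case $T_0$ being the genuinely new phenomenon): for $T_0$ we use the Hasse–Weil-type bound $N_{f_1}\le(q-1)(q+\floor{2\sqrt q}-2)=(q-1)^2+(q-1)(\floor{2\sqrt q}-1)$, giving case (2a); for $S_2$ and $E$ the bounds $(q-1)^2+2$ and $(q-1)^2+3$ are again absorbed into the $+2$ allowed in (2b) since $\Vol_3(P)-3L+1\ge 0$ in those configurations (one checks $\Vol_3(E)=3$, $\Vol_3(S_2)=2$ against $L$ via \rl{VolumeBound}). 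Finally, $k=2$: by \rt{main} the two big factors, together with any further positive-dimensional summands, force each big $P_i$ to be one of $T_0,S_1,S_2,E$; in every case $N_{f_i}\le(q-1)(q+\floor{2\sqrt q}-2)$ (this bound dominates all of $(q-1)^2$, $(q-1)^2+2$, $(q-1)^2+3$ once $q$ is large, which is guaranteed by $\cchar(\F_q)>41$ and the implicit size assumptions), so summing over the two big factors and the $L-2$ small ones yields $N_f\le L(q-1)^2+2(q-1)(\floor{2\sqrt q}-1)$, which is case (3).

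The main obstacle I anticipate is bookkeeping in cases (2) and (3): one must be careful that the volume estimate $\Vol_3(P_1)\le\Vol_3(P)-3(L-1)$ is valid — this needs that the other summands are genuinely positive-dimensional and that \rl{VolumeBound} can be iterated, which requires each partial sum $P_1+\cdots+P_j$ to remain $3$-dimensional or else be handled separately — and that the constant ``$+2$'' in (2b) really does absorb the $S_2$ and $E$ contributions, i.e. that those configurations satisfy $\Vol_3(P)-3L+1\ge 0$. Verifying the latter reduces to inspecting the finitely many configurations in \rt{main} and \rc{4-and-more}, which is routine but must be done. The genuinely conceptual input — that only the polytopes in Table~\ref{table} can appear among multiple big factors — is supplied entirely by Sections 3 and 4, so the proof here is a careful assembly rather than a new idea.
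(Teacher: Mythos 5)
Your overall strategy is the same as the paper's: factor $f=f_1\cdots f_L$, bound $N_f\le\sum_i N_{f_i}$, and use the classification of maximal decompositions (Sections 3--4) together with \rp{special} and \rt{VolumeBound} to bound each $N_{f_i}$. Cases (1), (2a), and (4) are essentially the paper's argument. However, there are two substantive errors in your treatment of cases (2b) and (3).

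For case (2b), you misread \rt{VolumeBound}. You claim it applies only when $\w(P_1)>1$, and then you try to handle the width-one possibilities by asserting that $P_1$ must be one of $T_0$, $S_2$, $E$. That assertion is false: the restriction of the big factor to the short list in Table~\ref{table} comes from the classifications of \emph{pairs and triples} (Theorems~\ref{T:two_empty_tetrahedra}, \ref{T:T_0+four}, \ref{T:5plus4}, \ref{T:5plus5}, \ref{T:main}), which only apply when there are two or more big summands. When $k=1$, the single summand $P_1$ is unconstrained beyond $L(P_1)=1$ and $|P_1|\ge 4$; for instance, $P_1$ could be an empty tetrahedron $T_{a,b}$ of lattice width one and arbitrarily large volume, which is neither $T_0$, $S_2$, nor $E$. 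Your argument does not bound $N_{f_1}$ in that situation. The fix is actually simpler than what you wrote: \rt{VolumeBound} applies to \emph{every} $3$-dimensional $P_1$ with $L(P_1)=1$, including the width-one ones (its proof combines \rt{JoshFinite} for width $>1$ with \rp{Infinite} for width one), so no case split by width is needed at all. Since $P_1$ is not equivalent to $T_0$ (we are in case (2b)) and $|P_1|\ge 4$, $P_1$ must be $3$-dimensional, and one applies \rt{VolumeBound} plus the iterated \rl{VolumeBound} estimate directly, exactly as in your width-$>1$ branch.

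For case (3), you invoke \rt{main}, but that theorem classifies \emph{triples} of big summands; for $k=2$ you need the pairwise classification. You also omit $K_1$ and $K_2$, which genuinely occur as big factors when $k=2$ by \rt{5plus4} and \rt{5plus5} (e.g.\ the pairs $(K_1,S_1)$, $(K_2,S)$, $(K_1,K_1)$). Your claimed individual bound $N_{f_i}\le(q-1)(q+\floor{2\sqrt q}-2)$ does in fact dominate the $K_1$ and $K_2$ bounds from \rp{special} for $q>41$, but you never check this, and without $K_1$ and $K_2$ on your list the argument as written does not cover all the cases that arise. The paper handles this by enumerating the possible size pairs $(4,4)$, $(5,4)$, $(5,5)$ and comparing the resulting sums $N_{f_1}+N_{f_2}$ directly against $2(q-1)^2+2(q-1)(\floor{2\sqrt q}-1)$.
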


\begin{pf} Let $f=f_1\cdots f_L$ be the factorization of $f$ into $L$ absolutely irreducible factors.
Recall that it corresponds to a maximal decomposition $P_1+\dots+P_L\subseteq P$,
where $P_i$ is the Newton polytope of $f_i$. Ignoring possible common zeros of the $f_i$ we
have $N_f\leq N_{f_1}+\dots+N_{f_L}$ so it is enough to bound each $N_{f_i}$ separately. 

In case (1) each $P_i$ is either a primitive segment or a unit triangle. Using the first two rows in the table of \rp{special} we obtain the bound. In case (2a) the only summand of size four is equivalent to $T_0$ and the others are either primitive segments or unit triangles, and the bound follows from \rp{special}.

For (2b), assume that $|P_1|\geq 4$ and $|P_i|\leq 3$ for $i\geq 2$. By \rt{VolumeBound}, 
$$N_{f_1}\leq (q-1)^2+(\Vol_3(P_1)-2)q+2.$$ 
Also, by \rl{VolumeBound}, we have
$$\Vol_3(P)\geq \Vol_3(P_1+\dots+P_L)\geq \Vol_3(P_1)+3(L-1).$$
Finally, for $i\geq 2$ we have $N_{f_i}\leq (q-1)^2$ by \rp{special}. Combining these inequalities
we obtain the required bound.

For (3) we use our classification from \rs{classification-2}. Up to reordering the $P_i$, we may assume 
that  $|P_1|\geq |P_2|\geq 4$ and $|P_i|\leq 3$ for $i\geq 3$. Then $N_{f_i}\leq (q-1)^2$ for $i\geq 3$ and
we need to show 
\begin{equation}\label{e:sum-of-two}
N_{f_1}+N_{f_2}\leq 2(q-1)^2+2(q-1)(\floor{2\sqrt{q}}-1).
\end{equation}
As we saw in \rs{summary}, the 
only possible pairs of sizes $(|P_1|,|P_2|)$ are $(4,4)$, $(5,4)$, and $(5,5)$. In the first case,
by \rt{two_empty_tetrahedra} and \rt{T_0+four}, each of $P_1$ and $P_2$ is equivalent to $S_1$, $S_2$, or $T_0$.
By comparing the bounds in \rp{special}, the largest bound for $N_{f_1}+N_{f_2}$ is $2(q-1)(q+\floor{2\sqrt{q}}-2)$ when both $P_1$ and $P_2$ are equivalent to $T_0$, and \re{sum-of-two} follows. 
The case $(|P_1|,|P_2|)=(5,4)$ is covered by \rt{5plus4}. In this case the 
largest bound for $N_{f_1}+N_{f_2}$ is $2(q-1)^2+3q+4$ when $P_1$ is equivalent to $K_2$ and $P_2$ is equivalent to $S_2$. But this bound is smaller than the one in \re{sum-of-two} for $q\geq 4$. Finally, when 
$(|P_1|,|P_2|)=(5,5)$, by \rt{5plus5}, $(P_1,P_2)$ is equivalent to $(K_1,K_1)$ and, hence, 
$N_{f_1}+N_{f_2}\leq 2(q-1)^2+4q+4$, which is less than  the bound in \re{sum-of-two} for $q\geq 5$.

Lastly, for (4) we use \rc{4-and-more}. By  \rp{special}, the 
largest bound is obtained in case (iv), i.e., when exactly one of the $P_i$ is equivalent to $E$ and the rest are
equivalent to~$S_2$. In this case we have $N_f\leq L\,(q-1)^2+2k+1\leq L\,(q-1)^2+2L+1$, as stated.
\end{pf}

Note that the assumption  $\cchar(\F_q)>41$ in \rt{max-a} is only needed in case (2b). Even in this case it can be relaxed 
depending on which of the 108 finite classes the corresponding Newton polytope belongs to, see \rr{char41}.

We remark that the bounds in \rt{max-a} are rarely sharp, as the factors often have common zeros which we do not take into account in the proof. For an instance when the bound $N_f\leq L\,(q-1)^2$ is sharp, see \rp{simplex} below.

\begin{Ex}\label{Ex:T0+T0} 
Let $P_1=[2e_1+e_2,e_1+2e_2,0]$, $P_2=[3e_1, e_3,2e_3]$, and $P=P_1+P_2$. Note that $P_1$ and $P_2$
are equivalent to $T_0$ and $L(P)=2$, according to \rt{T_0+four}. Let $\F_7$ be the field of size 7 and consider
$f_1=x^2y-2xy^2+1$ and $f_2=x^3-2z+z^2$ in $\F_7[x,y,z]$. The $\F_7$-space $\cL_P$ has dimension $|P|=15$. The polynomial $f=f_1f_2$ is an element of $\cL_P$ with the largest possible number of non-unit factors. One can check that 
each of $f_1$ and $f_2$ has $54$ zeros in $(\F_7^*)^3$. Moreover, they have 12 zeros in common and, hence, $N_f=96$. 
In fact, this is the largest number of zeros over all non-zero $f\in\cL_P$, as confirmed using {\sc Magma}.
Let us compare this to the bounds we obtained above. For $q=7$ the bound for $N_{f_i}$ in the fourth case of \rp{special} equals $60$; and the bound for $N_f$ in case (3) of \rt{max-a} equals $120$.

\end{Ex}

As we saw in \rt{max-a}, the difference 
$N_f-L\,(q-1)^2$ can have various orders of magnitude depending on the number of summands in the maximal decomposition with 4 or more lattice points (i.e. the number of absolutely irreducible factors of $f$ with 4 or more monomials).  In our next result we give a universal bound for $N_f$ in terms of the Minkowski length only, when $q$ is sufficiently large. The threshold for $q$ also only uses the Minkowski length and the volume of $P$ and does not require any knowledge of maximal decompositions in $P$. The corresponding result for bivariate polynomials is contained
in \cite[Theorem 2.5]{SoSo1}.

\begin{Cor}\label{C:max}
Let $P\subset [0,q-2]^3$ be a lattice polytope and $L=L(P)$ its Minkowski length. 
Assume $\cchar(\F_q)>41$ and $q\geq (c+\sqrt{c^2+1})^2$, where $c=\frac{1}{8}\left(\Vol_3(P)-3L+3\right)$. Consider $f\in\cL_P$ with the largest number of absolutely irreducible factors. Then
$$N_f\leq L\,(q-1)^2+2(q-1)(\floor{2\sqrt{q}}-1).$$
\end{Cor}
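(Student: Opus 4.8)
The plan is to derive this directly from \rt{max-a}. That theorem, sorted according to the number $k$ of absolutely irreducible factors of $f$ having four or more monomials, already provides an explicit upper bound for $N_f$ in each of the cases $k=0,1,2,\ge 3$, so it is enough to check that each such bound is at most $L(q-1)^2+2(q-1)(\floor{2\sqrt q}-1)$.

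The routine cases come first. For $k=0$, and for $k=1$ when the exceptional factor has Newton polytope equivalent to $T_0$ (cases (1) and (2a) of \rt{max-a}), the bound does not exceed $L(q-1)^2+(q-1)(\floor{2\sqrt q}-1)$, which is below the target. Case (3), $k=2$, gives exactly the target. For $k\ge 3$ (case (4)) the bound is $L(q-1)^2+2L+1$; since $P\subseteq[0,q-2]^3$, monotonicity of the Minkowski length together with $L([0,q-2]^3)=3(q-2)$ (see \rs{mink-length}) gives $L\le 3q-6$, so $2L+1\le 6q-11$, while $\cchar(\F_q)>41$ forces $q\ge 43$, whence $\floor{2\sqrt q}\ge 13$ and $2(q-1)(\floor{2\sqrt q}-1)\ge 24(q-1)\ge 6q-11$.

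The heart of the matter is case (2b): $k=1$ with the exceptional factor $f_1$ having a $3$-dimensional Newton polytope $P_1$ not equivalent to $T_0$. Here \rt{max-a} gives $N_f\le L(q-1)^2+(\Vol_3(P)-3L+1)q+2$, and one wants
$$(\Vol_3(P)-3L+1)q+2\ \le\ 2(q-1)(\floor{2\sqrt q}-1).$$
With $c=\tfrac18(\Vol_3(P)-3L+3)$, the hypothesis $q\ge(c+\sqrt{c^2+1})^2$ is, after squaring and rearranging, equivalent to $q-1\ge 2c\sqrt q$, i.e.\ to $\Vol_3(P)-3L+3\le\tfrac{4(q-1)}{\sqrt q}$; substituting this, transferring the constant term, and estimating $\floor{2\sqrt q}$ from below should reduce the inequality to a numerical statement in $q$ (using also that $\Vol_3(P)-3L+3$ is an integer). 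I expect this passage---turning the quadratic threshold into the linear-in-$q$ comparison, uniformly in the lattice data of $P$---to be the main obstacle, and it is exactly what forces the hypothesis to take the shape $(c+\sqrt{c^2+1})^2$.

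One subtlety to keep in mind inside case (2b): the bound of \rt{max-a}(2b) rests on the estimate of \rt{VolumeBound}, which is sharp enough when $P_1$ lies among the finitely many classes of width greater than one (there its volume, and hence the whole estimate, is controlled by an absolute constant), but not when $P_1$ has lattice width one and unbounded volume. In that sub-case I would instead work from \rp{Infinite}: a monomial change of variables makes $f_1$ linear in one variable, and the number of common zeros of its two coefficient polynomials is far smaller than their mixed volume, which is what is needed to stay under the target.
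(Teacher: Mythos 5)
Your overall strategy is the same as the paper's: compare each of the four bounds in Theorem~\ref{T:max-a} against the target. Cases (1), (2a), (3) are dispatched correctly, and your handling of case (4) is correct, though you invoke $\cchar(\F_q)>41$ unnecessarily---the paper simply observes that $6(q-2)+1\leq 2(q-1)(\lfloor 2\sqrt{q}\rfloor-1)$ for every $q\geq 2$. Your reduction of case (2b) to the inequality $q-1\geq 2c\sqrt{q}$ is also exactly the algebra behind the paper's ``direct calculation.''

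However, the closing paragraph of your proposal contains a genuine misconception. You claim that the bound of Theorem~\ref{T:max-a}(2b) ``rests on the estimate of Theorem~\ref{T:VolumeBound}, which is sharp enough when $P_1$ lies among the finitely many classes of width greater than one\dots but not when $P_1$ has lattice width one,'' and that in the width-one case one should ``instead work from Proposition~\ref{P:Infinite}.'' This is a false dichotomy. Theorem~\ref{T:VolumeBound} is proved in two halves: width greater than one is handled by Theorem~\ref{T:JoshFinite}, and width one is handled precisely by Proposition~\ref{P:Infinite}. The resulting bound $N_{f_1}\leq(q-1)^2+(\Vol_3(P_1)-2)q+2$ is valid for every $3$-dimensional $P_1$ with $L(P_1)=1$, of any width and any volume. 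There is no sub-case left to patch. The fact that $\Vol_3(P_1)$ is unbounded in the width-one family is not an obstruction to the estimate---it is simply the reason the threshold on $q$ must depend on $\Vol_3(P)$, which is exactly what the hypothesis $q\geq(c+\sqrt{c^2+1})^2$ with $c=\tfrac18(\Vol_3(P)-3L+3)$ encodes. In particular, your remark that one needs the number of common zeros of $g_0,g_1$ to be ``far smaller than their mixed volume'' is not part of the argument and is not true in general; the corollary is proved by controlling $q$, not by improving the BKK estimate in Proposition~\ref{P:Infinite}. You should drop that paragraph and simply finish the algebra in case (2b): from $q-1\geq 2c\sqrt{q}$ one gets $(\Vol_3(P)-3L+1)q+2\leq 2(q-1)(2\sqrt{q}-1)$, and one then compares with $2(q-1)(\lfloor 2\sqrt{q}\rfloor-1)$.
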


\begin{pf} This follows from the observation that, when 
$q$ satisfies the above condition, the bound in case (3) of \rt{max-a} is the maximum of all the bounds listed in there. Clearly, it is larger than the bounds in (1) and (2a). Also, since $P\subset [0,q-2]^3$, we have $L=L(P)\leq L([0,q-2]^3)=3(q-2)$, see \rs{mink-length}. Since 
$6(q-2)+1\leq 2(q-1)(\floor{2\sqrt{q}}-1)$, it follows that the bound in (4) is less than the one in (3). Finally, for the bound in (2b), the inequality
$$(\Vol_3(P)-3L+1)q+2\leq 2(q-1)(\floor{2\sqrt{q}}-1)$$
holds if and only if $q\geq (c+\sqrt{c^2+1})^2$, where $c=\frac{1}{8}\left(\Vol_3(P)-3L+3\right)$, by a direct calculation.
\end{pf}

In the next theorem we show that for large enough $q$ the bound in \rc{max} holds for all non-zero $f\in\cL_P$.
We let  $d$ be the smallest integer such that $P$ is contained in $d\Delta^3$ up to a lattice translation.  
We have $L=L(P)\leq L(d\Delta^3)=d$, see \rs{mink-length}. The case $L=1$ was considered in the previous section, so we will assume that $L\geq 2$. Also, the case $d=L=2$ is simple: $N_P\leq 2(q-1)^2$, as follows from \rp{simplex} below. Thus, we will also assume that $d\geq 3$.

Define $\alpha(P)$ to be the smallest value of $q\geq 7507$ satisfying
the following two inequalities: 

\begin{equation}\label{e:alpha}
\begin{cases}
\ (L-1)q^2-(d^2-3d-2)q^{3/2}-\left(12(d+3)^4+2(L+1)\right)q-4q^{1/2}+L+2\geq 0,\\
\ q^2-(d_1^2-3d_1-2)q^{3/2}-\left(12(d_1+3)^4+6\right)q-4q^{1/2}+4\geq 0,\\   
 \end{cases}
 \end{equation}
where $d_1=d-L+2$. 
Note that when $L=2$ the two inequalities coincide. 

\begin{Th}\label{T:all} 
Let $P\subset d\D^3$ be a lattice polytope of Minkowski length $L$ and $3\leq d< q$.
Let $\alpha(P)$ be as above. Then for any $q\geq \alpha(P)$
we have the following bound for the maximal number of $\F_q$-zeros over all $0\neq f\in\cL_P$
\begin{equation}\label{e:bound}
N_P\leq L\,(q-1)^2+2(q-1)({2\sqrt{q}}-1).
\end{equation}
\end{Th}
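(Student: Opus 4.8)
The plan is to reduce the general statement to the combinatorial classification already established and the volume bounds from the previous section, by a careful bookkeeping of what happens when a polynomial $f\in\cL_P$ is \emph{not} of maximal factorization type. So let $f\in\cL_P$ be arbitrary nonzero with $f=f_1\cdots f_k$ the factorization into absolutely irreducible factors, so $k\leq L$. First I would split into two regimes. If $k=L$ (the maximal case), then \rc{max} already gives the bound, provided $q$ is large enough; the thresholds in \re{alpha} are chosen to dominate the one in \rc{max}, so that case is done. If $k<L$, the idea is that losing a factor costs us a whole factor of $(q-1)^2$ in the ``main term'' $L(q-1)^2$, and this slack is more than enough to absorb the extra zeros that a single large irreducible factor can contribute, which by Ghorpade--Lachaud \re{affine} (or the Lang--Weil bound \re{proj}) is only $O(q^{3/2})$ with an explicit constant governed by the degree.

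More precisely, for $k<L$ I would bound $N_f\leq N_{f_1}+\dots+N_{f_k}$ and estimate each $N_{f_i}$ using \re{affine}: since $P_{f_i}\subseteq P\subseteq d\D^3$, each $f_i$ has degree at most $d$, so $N_{f_i}\leq q^2+(d^2-3d+2)q^{3/2}+12(d+3)^4 q^{2}/q + \dots$ — being careful here, the affine hypersurface bound \re{affine} for $n=3$ gives $N_{f_i}\leq q^2 + (d-1)(d-2)q^{3/2} + 12(d+3)^4 q$. Summing over $i\leq k\leq L-1$ gives roughly $N_f\leq (L-1)q^2 + (L-1)(d-1)(d-2)q^{3/2} + 12(L-1)(d+3)^4 q$. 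The target $L(q-1)^2 + 2(q-1)(2\sqrt q - 1) = L q^2 - 2Lq + L + 4q^{3/2} - 4\sqrt q - 2q + 2 +\dots$ has a full extra $q^2$, so the required inequality is $(L-1)q^2 + (L-1)(d^2-3d+2)q^{3/2}+\dots \leq L q^2 + 4q^{3/2}+\dots$, i.e. $q^2 \gtrsim (L-1)(d^2-3d+2)q^{3/2}$ plus lower-order terms — and the first inequality in \re{alpha} is exactly the cleaned-up form of this, after noting $L\le d$ and grouping the error terms. So for $k\leq L-1$ one large factor is never a problem.

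The subtlety — and I expect this to be the main obstacle — is the borderline case where $k=L-1$ but one of the factors, say $f_1$, is a ``linear factor'' in the Minkowski-length sense whose Newton polytope $P_1$ is 3-dimensional with large volume (as in \rex{two}), so that $N_{f_1}$ is genuinely of size $q^2 + (\Vol_3(P_1)-2)q + O(1)$ rather than just $q^2$, and simultaneously $\Vol_3(P_1)$ could be close to $\Vol_3(P)$. This is why the second inequality in \re{alpha}, with $d_1 = d-L+2$, is needed: if $f_1$ is such a factor then the remaining $L-2$ factors must each be honest factors contributing at least a unit segment, so $P_1$ is contained in a translate of $d_1\D^3$ where $d_1 = d-(L-1)+1 = d-L+2$ (one uses $L(P_1)=1$ combined with $L(P)=L$ to bound the ``degree budget'' left for $P_1$ — more carefully, $P_1 + (\text{segments})\subseteq P\subseteq d\D^3$ forces $P_1\subseteq (d-L+2)\D^3$ up to translation). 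Then \re{affine} applied to $f_1$ with degree bound $d_1$, plus $(q-1)^2$ for each of the other $L-2$ factors of size $\leq 3$ via \rp{special}, plus $(q-1)(2\sqrt q - 1)$ slack from the one remaining unit-triangle-or-$T_0$ factor, gives the second inequality in \re{alpha}. Combining: for $q\geq\alpha(P)$ every case is dominated by $L(q-1)^2 + 2(q-1)(2\sqrt q-1)$, which is \re{bound}.

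To organize the write-up I would first dispose of the maximal case via \rc{max}, then handle $k\leq L-2$ with the crude sum-of-$N_{f_i}$ bound and the first inequality of \re{alpha}, and finally treat $k=L-1$ by separating according to whether the ``extra'' factor has a 3-dimensional Newton polytope: if not, every $P_i$ has bounded volume by \rc{4-and-more} / the classification and the bound is easy; if so, it is unique (only one 3-dimensional summand can have arbitrarily large volume, by \rl{VolumeBound} and the classification), and we invoke the second inequality of \re{alpha}. The one piece of real care is tracking the degree-to-Minkowski-length conversion $P_1\subseteq(d-L+2)\D^3$ up to translation; everything else is assembling the explicit inequalities, which is exactly what the definition of $\alpha(P)$ in \re{alpha} packages.
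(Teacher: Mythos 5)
Your overall strategy — reduce the maximal case to \rc{max}, and for non-maximal factorizations invoke the Ghorpade--Lachaud bound \re{affine} and show the extra $q^2$ of slack absorbs the error — is the same as the paper's. However, there is a concrete gap in how you package the non-maximal cases, and your identification of the two inequalities in \re{alpha} with your two cases is incorrect.

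The problem is in the step where you apply \re{affine} to each factor of degree $\leq d$ and sum: if $f=f_1\cdots f_k$ with $k\leq L-1$ and you use $\deg f_i\leq d$ for \emph{every} $i$, you obtain $N_f\leq (L-1)q^2+(L-1)(d-1)(d-2)q^{3/2}+12(L-1)(d+3)^4q$, which would require a threshold on the order of $q^{1/2}\gtrsim(L-1)(d-1)(d-2)$, i.e. $q\gtrsim L^2d^4$. That is \emph{not} what the first inequality in \re{alpha} says — it has only $(d^2-3d-2)$, not $(L-1)(d-1)(d-2)$, in front of $q^{3/2}$ — so your claim that the first inequality ``is exactly the cleaned-up form of this, after noting $L\leq d$ and grouping the error terms'' does not hold. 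The crucial point you are missing is the degree \emph{budget}: $\sum_i\deg f_i\leq d$, so you cannot charge degree $d$ to every factor at once. The paper handles this by estimating $\sum_i(d_i-1)(d_i-2)q^{3/2}+12(d_i+3)^4q$ as a convex function of $(d_1,\dots,d_k)$ on the simplex $d_1+\dots+d_k\leq d$, $d_i\geq 2$, so that its maximum occurs when one $d_i$ is as large as possible and the rest equal $2$. It then defines $\psi(k,m)$ (with $k$ the number of nonlinear factors and $m$ the total number of factors), shows $\psi$ is convex on the polygon $\Pi$ cut out by $1\leq k\leq m\leq L-1$, $k+m\leq d$, and that its maximum is at the two vertices $\psi(1,1)$ and $\psi(1,L-1)$. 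These two vertices are precisely what the two inequalities in \re{alpha} encode: the first corresponds to a \emph{single} irreducible factor of degree up to $d$, and the second to $L-1$ factors with exactly one nonlinear factor of degree up to $d_1=d-L+2$. You correctly arrived at the $d_1=d-L+2$ extremal case, but you missed that the other extreme is one big irreducible factor ($m=1$), not ``$k\leq L-2$,'' and you have no mechanism to control the intermediate cases $2\leq m\leq L-2$ without the convexity reduction. As written, your proposal would prove the theorem only with a strictly larger threshold than the stated $\alpha(P)$.
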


\begin{pf} 
Consider a non-zero polynomial $f\in\cL_P$. Let $f=f_1\cdots f_m$ be the factorization into absolutely irreducible factors.
If $m=L$ then the statement follows from \rc{max}, so we assume that $1\leq m\leq L-1$. Let $k$ be the number of the $f_i$
of degree greater than one. If $k=0$ then $N_f\leq m(q-1)^2$ which clearly is less than the bound in \re{bound}, hence, we assume that $k\geq 1$. Without loss of generality we may assume that $f_1,\dots, f_k$ have degrees greater than one.

Let $d_i=\deg f_i$ and let $d'=\deg f$. We have $d'=d_1+\dots+d_m=d_1+\dots+d_k+(m-k)$ and, hence, $d'\geq m+k$. According to \re{affine}, for $1\leq i\leq k$,
\begin{equation}\label{e:crude}
N_{f_i}\leq q^2+(d_i-1)(d_i-2)q^{3/2}+12(d_i+3)^4q,
\end{equation}
and, hence,
$$N_f\leq (m-k)(q-1)^2+kq^2+\sum_{i=1}^k(d_i-1)(d_i-2)q^{3/2}+12(d_i+3)^4q.$$
Note that the function $\phi(d_1,\dots,d_k)=\sum_{i=1}^k(d_i-1)(d_i-2)q^{3/2}+12(d_i+3)^4q$ is convex and, hence, its maximum on the simplex $d_1+\dots+d_k=d'-m+k$, $d_i\geq 2$ is attained at the vertices. Therefore,
\begin{equation*}
\begin{split}
\phi(d_1,d_2,\dots,d_k)&\leq\phi(d'-m-k+2,2,\dots,2)\\
&=(d'-m-k+1)(d'-m-k)q^{3/2}+12\left((d'-m-k+5)^4+ 5^4(k-1)\right)q.
\end{split}
\end{equation*}
Note that the right hand side of the above inequality is increasing in $d'$.
As $d'\leq d$, this implies that $N_f$ is bounded above by the function
 \begin{equation*}
\begin{split}
\psi(k,m)&=(m-k)(q-1)^2+kq^2+(d-m-k+1)(d-m-k)q^{3/2}\\
&+12\left((d-m-k+5)^4+ 5^4(k-1)\right)q.
 \end{split}
\end{equation*}

The function $\psi(k,m)$ is defined on the polygon $\Pi$ given by $1\leq k\leq m\leq L-1, k+m\leq d$. Direct calculations 
show that  $\psi(k,m)$ is convex on $\Pi$ and attains maximum at one of its vertices. If
$d\geq 2(L-1)$ then $\Pi$ is a triangle with vertices $\{(1,1),(1,L-1),(L-1,L-1)\}$. In this case 
$\psi(1,L-1)\geq\psi(L-1,L-1)$ for all values of $q>0$. 
Indeed, when $L=2$ they are equal and when $L\geq 3$ we have
\begin{equation*}
\begin{split}
&\frac{\psi(1,L-1)-\psi(L-1,L-1)}{L-2}=\\
&(2d-3L+3)q^{3/2}+\left(12(2d-3L+12)\left((d-L+5)^2+(d-2L+7)^2\right)-12\cdot 5^4-2\right)q+1\geq\\
&2q^{3/2}+550q+1>0,
\end{split}
\end{equation*}
where in the second to last inequality we used $d\geq 2(L-1)$ and $L\geq 3$.
Therefore, 
$$N_f\leq\psi(k,m)\leq \max\{\psi(1,1),\psi(1,L-1)\}.$$

If $d< 2(L-1)$ then $\Pi$ has four vertices $\{(1,1),(1,L-1),(d-L+1,L-1),(d/2,d/2)\}$.
First, we see that $\psi(1,L-1)\geq \psi(d-L+1,L-1)$. Indeed, 
\begin{equation*}
\begin{split}
\psi(1,&L-1)-\psi(d-L+1,L-1)=\\
&(d-L)(d-L+1)q^{3/2}+\left(12(d-L+5)^4-12\cdot 5^4(d-L+1)-2(d-L)\right)q+d-L.
\end{split}
\end{equation*}
One directly checks that each coefficient above is zero when $d=L$ and positive when $d-L\geq 1$.

Next we note that for $q\geq 7507$ we also have $\psi(d-L+1,L-1)>\psi(d/2,d/2)$. Indeed,
\begin{equation*}
\begin{split}
\psi(d-L+1,L-1)-\psi(d/2,d/2)=\frac{1}{2}\left(2(L-1)-d\right)\left(q^2-7504q+2\right)>0.
\end{split}
\end{equation*}
It remains to note that $\psi(1,1)\leq L\,(q-1)^2+2(q-1)({2\sqrt{q}}-1)$ is equivalent to the first inequality
in \re{alpha} and $\psi(1,L-1)\leq L\,(q-1)^2+2(q-1)({2\sqrt{q}}-1)$ is equivalent to the second inequality in \re{alpha}.
Therefore, 
$$N_f\leq \max\{\psi(1,1),\psi(1,L-1)\}\leq L\,(q-1)^2+2(q-1)(\floor{2\sqrt{q}}-1),$$
for all $q\geq\alpha(P)$.
 \end{pf}

To conclude this section, we mention one case when we obtain a sharp upper bound on $N_P$ for all values of $q\geq 2$.
As we mentioned in the introduction, when $P$ is the $L$-dilate of the standard simplex, $P=L\D^n$, 
we have  $N_P= L\,(q-1)^{n-1}$ by considering
the product of $L$ linear factors in one variable. The next proposition is a slight generalization of this fact. The method used in the proof is
standard and is a part of folklore. 

\begin{Prop}\label{P:simplex} 
Let $P$ be a lattice polytope  
equivalent to a subpolytope of  $L\D^n$. Then  $N_P\leq L\,(q-1)^{n-1}$. Moreover, $N_P= L\,(q-1)^{n-1}$ if and only if
$P$ contains a lattice segment of lattice length $L$. 
\end{Prop}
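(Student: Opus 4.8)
The plan is to first establish the upper bound and then analyze the equality case. For the upper bound, I would use that $N_P$ is an $\AGL(n,\Z)$-invariant (see \rs{monomial-change}) and is monotone under inclusion, since $P\subseteq P'$ forces $\cL_P\subseteq\cL_{P'}$; hence it suffices to bound $N_{L\D^n}$, i.e. to show that every nonzero $f\in\F_q[x_1,\dots,x_n]$ of total degree at most $L$ has at most $L(q-1)^{n-1}$ zeros in $(\F_q^*)^n$. If $L\ge q$ this is immediate from $N_f\le(q-1)^n$, so we may assume $L\le q-1$. I would prove the bound by induction on $n$: for $n=1$ a nonzero polynomial of degree $\le L$ has at most $L$ roots; for $n\ge 2$, write $f=\sum_{i=0}^m c_i(x_1,\dots,x_{n-1})x_n^i$ with $c_m\ne 0$, so $\deg c_i\le L-i$, and count zeros fiberwise over $(\F_q^*)^{n-1}$. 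A point $a$ with $c_m(a)\ne 0$ gives a fiber polynomial of degree exactly $m$, contributing at most $m$ zeros; a point in $W:=\{c_m=0\}\cap(\F_q^*)^{n-1}$ contributes at most $q-1$. If $m=L$ the leading coefficient $c_L$ is a nonzero constant, so $W=\varnothing$; if $m<L$ the inductive hypothesis applied to $c_m$ gives $|W|\le(L-m)(q-1)^{n-2}$. Either way $N_f\le m(q-1)^{n-1}+(q-1-m)|W|\le L(q-1)^{n-1}$, since $q-1-m\le q-1$.

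For the equality statement, the ``if'' direction is a direct construction: if $P$ contains a lattice segment of lattice length $L$, then after a monomial change of variables I may assume this segment is $[0,Le_1]$, and (using $L\le q-1$) the polynomial $f=\prod_{i=1}^L(x_1-\alpha_i)$ with distinct $\alpha_i\in\F_q^*$ lies in $\cL_P$ and has exactly $L(q-1)^{n-1}$ zeros in $(\F_q^*)^n$; combined with the upper bound this gives $N_P=L(q-1)^{n-1}$.

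The ``only if'' direction is the core of the argument, and the key point is that equality forces a dichotomy on the one-variable degrees of $f$. After a monomial change of variables I may assume $P\subseteq L\D^n$, so every $f\in\cL_P$ is an honest polynomial with $\supp(f)\subseteq L\D^n$; since $\cL_P$ is finite, pick $f$ with $N_f=N_P=L(q-1)^{n-1}$. Running the fiberwise count above with the role of $x_n$ played by an arbitrary variable $x_i$, and noting that for $1\le m=\deg_{x_i}f<L$ one gets the strictly smaller bound $N_f\le m(q-1)^{n-1}+(q-1-m)(L-m)(q-1)^{n-2}<L(q-1)^{n-1}$, I conclude $\deg_{x_i}f\in\{0,L\}$ for every $i$. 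Let $T=\{i:\deg_{x_i}f=L\}$, which is nonempty because $f$ is not a monomial. If $i\in T$ then $f$ has a monomial of $x_i$-degree $L$, which by $\supp(f)\subseteq L\D^n$ must be exactly $x_i^L$; so $x_i^L\in\supp(f)$ for each $i\in T$. If $|T|\ge 2$, say $1,2\in T$, then $P_f\supseteq[Le_1,Le_2]$, a lattice segment of lattice length $L$. If $|T|=1$, say $T=\{1\}$, then $f$ involves only $x_1$ and has degree exactly $L$, so $N_f=L(q-1)^{n-1}$ forces the univariate polynomial $f(x_1)$ to have $L$ (hence $L$ distinct, and all nonzero) roots, giving $P_f=[0,Le_1]$. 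In every case $P\supseteq P_f$ contains a lattice segment of lattice length $L$, as the lattice length is $\AGL(n,\Z)$-invariant.

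The main obstacle, modest as it is given that the ingredients are classical, is isolating the degree dichotomy $\deg_{x_i}f\in\{0,L\}$ cleanly from the equality case of the fiberwise estimate, and handling the bookkeeping around the degenerate range $L\ge q$ and the reduction from Laurent polynomials to genuine polynomials through the $\AGL(n,\Z)$-action; once that dichotomy is available the classification of the extremal $f$ is immediate.
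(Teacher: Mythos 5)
Your proof is correct and follows essentially the same approach as the paper: the upper bound is obtained by the identical fiberwise induction on $n$ (write $f$ as a polynomial in $x_n$, bound the number of bad fibers via the inductive hypothesis on the leading coefficient, and observe that $N_f\le L(q-1)^{n-1}-m(L-m)(q-1)^{n-2}$), and the equality case is settled by the same observation that $1\le m<L$ forces a strict inequality. The only difference is organizational: you extract the dichotomy $\deg_{x_i}f\in\{0,L\}$ uniformly in $i$ and split on the set $T$ of variables of degree $L$, while the paper derives the dichotomy only for $x_n$, recurses via the inductive hypothesis when $m=0$, and passes to $x_{n-1}$ when $m=L$; the two arrangements are essentially equivalent.
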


\begin{pf} 
We prove the first statement and the ``only if" part of the second statement by induction on $n$. The case $n=1$ is trivial, so assume $n>1$.
By applying an affine unimodular map to $P$ we may assume $P\subseteq L\D^n$ and hence
every non-zero $f\in\cL_P$ has degree at most $L$. Thus, we can write
$$f=\sum_{i=0}^m f_ix_n^i,$$
where the $f_i$ are polynomials in $x_1,\dots,x_{n-1}$ and $m$ is the largest integer such that $f_m$ is non-zero. Note that $\deg f_m\leq L-m$.
Consider
$$Z=\{z\in(\F_q^*)^{n-1} : f_0(z)=\dots=f_m(z)=0\}.$$
If $z\in Z$ then $f(z,x_n)=0$ for any $x_n\in\F_q^*$. If $z\not\in Z$ then $f(z,x_n)$ is a non-zero polynomial in $x_n$ of degree
at most $m$ and, hence, has at most $m$ zeros. Therefore,
\begin{equation}\label{e:Z}
N_f\leq |Z|(q-1)+\left((q-1)^{n-1}-|Z|\right)m=m(q-1)^{n-1}+|Z|(q-1-m).
\end{equation}
On the other hand, $|Z|\leq N_{f_m}\leq (L-m)(q-1)^{n-2}$, where the last inequality follows by the inductive hypothesis applied to $P=(L-m)\D^{n-1}$. Combining this with \re{Z} produces
\begin{equation}\label{e:first}
N_f\leq L\,(q-1)^{n-1}-m(L-m)(q-1)^{n-2}\leq L\,(q-1)^{n-1},
\end{equation}
and the first statement follows.

Now let $f\in\cL_P$ be such that $N_f=L\,(q-1)^{n-1}$. Then in \re{first} we have equality and, hence, either $m=0$ or $m=L$.
In the first case $f=f_0$ whose support is contained in $P\cap L\D^{n-1}$. By induction, this implies that $P\cap L\D^{n-1}$
(and, hence, $P$) contains a lattice segment of length $L$. In the second case we see that the point $Le_n$ belongs to $P$.
Replacing $x_n$ with $x_{n-1}$ in the above argument we again see that  $P$ contains either a lattice segment of length $L$
or the point $Le_{n-1}$. But if it contains both $Le_{n-1}$ and $Le_n$ then it contains the segment $[Le_{n-1},Le_n]$, by the convexity of $P$.

Finally, the``if" part of the second statement is clear since after an affine unimodular map we may assume that $P$ contains the
segment  $[0,Le_1]$. Then, for distinct $\alpha_1,\dots, \alpha_L\in\F_q^*$, the polynomial $f=(x_1-\alpha_1)\cdots(x_1-\alpha_L)$ 
is contained in $\cL_P$ and has exactly $L\,(q-1)^{n-1}$ zeros in $(\F_q^*)^n$.
\end{pf}

\section{Number of $\F_q$-zeros of polynomials in $\cL_P$ when $P$ has lattice width one}

As we have seen in the previous section, the values of the function $\alpha(P)$ in \rt{all}  are quite large. We expect that the bound  \re{bound} holds for smaller values of $q$ as well, but the precise lower bound for $q$ depends on the geometry of $P$ in a non-trivial way. The reason why our threshold is so large is because we used rather crude estimate  \re{crude} that involve the degree of the $f_i$ and do not take into account the combinatorics 
of their Newton polytopes. To the best of our knowledge, there are no known generalizations
of \re{crude} (and, more generally, of \re{affine}) that estimate the number of 
$\F_q$-zeros of absolutely irreducible polynomials in terms of the Newton polytope, rather than the degree,
besides the case $L(P)=1$ in \cite[Th 4.30]{Josh} (see \rt{JoshFinite}) and when
$P$ has lattice width one (see \rp{Infinite}).
In this section we use \rp{Infinite} to produce a better bound for $N_P$ than \re{bound} for $P$ of lattice width one,
which holds for all $q$ starting with a much smaller threshold.


Let $P\subset\R^3$ be a lattice polytope of lattice width one. After a unimodular transformation we may assume 
that $P=\conv(P_0\times\{0\})\cup (P_1\times\{1\})$ for some lattice polytopes $P_0,P_1$ in $\R^2$. 
As in \rl{mix}, let $V(P_0,P_1)=\Vol_3(P)-\Vol_2(P_0)-\Vol_2(P_1)$ be the mixed volume of $P_0$ and $P_1$.
Define $\beta(P)$ to be
\begin{equation}\label{e:beta}
\beta(P)=\max\{37, (C+\sqrt{C^2+5/2})^2, (c+\sqrt{c^2+3})^2, (V(P_0,P_1)+1)^2/4\}, 
\end{equation}
where $C=\max_{i=0,1}\{\Vol_2(P_i)/4-L(P_i)+9/4\}$, and $c=\min_{i=0,1}\{\Vol_2(P_i)/2-2L+11/2\}$.
\begin{Th}\label{T:width-one} 
Let $P=\conv(P_0\times\{0\})\cup (P_1\times\{1\})$ be a lattice polytope of width one and Minkowski length $L$, and let
$\beta(P)$ be as above. Then for any $q\geq \beta(P)$
we have the following bound for the maximal number of $\F_q$-zeros over all $0\neq f\in\cL_P$
\begin{equation}
N_P\leq L\,(q-1)^2+(q-1)(\lfloor 2\sqrt{q}\rfloor-1).
\end{equation}
\end{Th}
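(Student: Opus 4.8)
The plan is to reduce $f$ to the setting of \rp{Infinite} after splitting off its bivariate part, and then combine that estimate with the two-dimensional minimum-distance theorem of \cite{SoSo1}. After an affine unimodular map (a monomial change of variables, which preserves $N_f$; see \rs{monomial-change}) we may assume $P$ lies between the planes $z=0$ and $z=1$, so every $0\neq f\in\cL_P$ has the form $f=f_0+zf_1$ with $P_{f_0}\subseteq P_0$ and $P_{f_1}\subseteq P_1$. If $f_1=0$ (or $f_0=0$), then $f$ is, up to a monomial, a bivariate polynomial with Newton polytope in $P_0$ (resp. $P_1$), so $N_f=(q-1)N_f^{(2)}$ with $N_f^{(2)}$ the number of its zeros in $(\F_q^*)^2$; since $L(P_i)\le L(P)=L$, the bound $N_f^{(2)}\le L(q-1)+\floor{2\sqrt q}-1$ of \cite[Theorem 2.5]{SoSo1} gives the claim, the threshold being accounted for by the $\Vol_2(P_i)$- and $L(P_i)$-terms built into $\beta(P)$ in \re{beta}.

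So assume $f_0,f_1\neq 0$. Put $h=\gcd(f_0,f_1)$ and write $f=hg$ with $g=g_0+zg_1$, $\gcd(g_0,g_1)=1$ and $g_1\neq 0$; since $\deg_z g=1$ and $g_0,g_1$ remain coprime over $\bar\F_q$, $g$ is absolutely irreducible, so \rp{Infinite} applies to it. A point of $(\F_q^*)^3$ is a zero of $f$ exactly when it lies on $\{h=0\}$ (which is $Z_h^{(2)}\times(\F_q^*)$ for $Z_h^{(2)}\subset(\F_q^*)^2$ the zero set of $h$, contributing $(q-1)|Z_h^{(2)}|$ points) or on $\{g=0\}$. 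Counting the latter as in the proof of \rp{Infinite}, subtracting the overlap $\{g=0\}\cap\{h=0\}$, and discarding a non-positive term, one arrives at a bound of the shape
\[
N_f\le (q-1)^2+|Z_{g_0}^{(2)}\cap Z_{g_1}^{(2)}|\,q-|Z_{g_0}^{(2)}|+(q-2)|Z_h^{(2)}|.
\]
By the BKK bound $|Z_{g_0}^{(2)}\cap Z_{g_1}^{(2)}|\le V(P_{g_0},P_{g_1})$, and by \cite[Theorem 2.5]{SoSo1} used in the sharp form in which the extra summand $\floor{2\sqrt q}-1$ occurs only when some irreducible factor of $h$ has Newton polytope equivalent to the planar triangle with one interior and three boundary lattice points, $|Z_h^{(2)}|\le L(P_h)(q-1)+\floor{2\sqrt q}-1$.

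The combinatorial ingredients are: (a) $L(P_g)+L(P_h)\le L(P_g+P_h)=L(P_f)\le L$ by superadditivity of the Minkowski length, so $L(P_h)\le L-1$; (b) since $P_{g_0}+P_h\subseteq P_0$ and $P_{g_1}+P_h\subseteq P_1$, monotonicity and bilinearity of the mixed volume give $V(P_{g_0},P_{g_1})+V(P_{g_0},P_h)+V(P_{g_1},P_h)+\Vol_2(P_h)\le V(P_0,P_1)$, in particular $V(P_{g_0},P_{g_1})\le V(P_0,P_1)$; and (c) a case split on whether $h$ has a planar-$T_0$ factor — if not, the full $(q-1)(\floor{2\sqrt q}-1)$ budget is free to absorb the $V(P_{g_0},P_{g_1})q$ contribution of $g$, using $q\ge (V(P_0,P_1)+1)^2/4$ from \re{beta}, which forces $V(P_0,P_1)\le\floor{2\sqrt q}-1$; if it does, one uses (b) to show that $V(P_{g_0},P_{g_1})$ is then forced to be small. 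Feeding (a)--(c) into the displayed estimate, together with the remaining thresholds in $\beta(P)$, yields $N_f\le L(q-1)^2+(q-1)(\floor{2\sqrt q}-1)$ in all cases.

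The main obstacle is item (c): precluding that a genuinely trivariate irreducible factor $g$ of large volume (the behaviour of \rex{two}, where $N_g$ can be as large as $(q-1)^2+(\Vol_3(P_g)-2)q+2$ with $\Vol_3(P_g)$ unbounded) coexists with a bivariate factor $h$ that itself contains a planar-$T_0$ factor; each of these two phenomena can, in isolation, consume an entire $(q-1)(\floor{2\sqrt q}-1)$, and naively combined they would overshoot the bound. Resolving this rests on the relation in (b) — a nontrivial $P_h$ eats into $P_0$ and $P_1$, forcing $\Vol_3(P_g)$ and hence $V(P_{g_0},P_{g_1})$ down while pushing $L$ up — on the sharp planar bound distinguishing $T_0$-factors, and on the precise calibration of the three square-root thresholds in \re{beta} so that the leftover $O(q^{3/2})$ and $O(q)$ terms cancel; the underlying structural reason that the coefficient of $(q-1)(\floor{2\sqrt q}-1)$ is one rather than two is \rc{width-one}, that a width-one maximal decomposition has at most one summand with four or more lattice points.
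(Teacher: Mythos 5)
Your overall architecture matches the paper's proof: reduce $P$ to lie between $z=0$ and $z=1$, dispose of the cases $f_0=0$ or $f_1=0$ using the planar bounds from \cite{SoSo1}, extract the (unique, since $\deg_z f=1$) absolutely irreducible factor $g=g_0+g_1z$ with $h=\gcd(f_0,f_1)\in\F_q[x^{\pm1},y^{\pm1}]$, control $N_g$ via \rp{Infinite} together with monotonicity of the mixed volume, control $N_h$ via \cite{SoSo1}, and invoke \rc{width-one} as the structural reason the $(q-1)(\floor{2\sqrt q}-1)$ correction appears with coefficient one. You also correctly pinpoint the danger: each of $g$ (if $V(P_{g_0},P_{g_1})$ is of size $\approx 2\sqrt q$) and $h$ (if some irreducible factor has a $T_0$-type Newton polygon) can independently consume the full $O(q^{3/2})$ budget.

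But you do not actually close the argument, and the place where you gesture at the resolution is where the paper does real work. First, in the case where a maximal decomposition of $P_h$ contains a $T_0$-summand, you write that the mixed-volume inequality in your item (b) ``forces $V(P_{g_0},P_{g_1})$ to be small,'' but you never show this. The paper's argument is different and sharper: appending $P_g$ to that maximal decomposition of $P_h$ gives a maximal decomposition of $P$ with a $T_0$-summand; since $\w(P)=1$, \rc{width-one} forces every \emph{other} summand, in particular $P_g$, to have at most three lattice points, and then \rp{special} gives $N_g\le(q-1)^2$ outright — no mixed-volume bound is needed. It is not clear that the mixed-volume inequality alone yields the same conclusion, and you would have to supply that argument. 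Second, the paper splits further on $L(P_h)=L-1$ versus $L(P_h)\le L-2$; in the latter case it reproves (``mimicking'' \cite[Th 2.6]{SoSo1}) that $N_h\le(L-1)(q-1)^2$ with no $\sqrt q$ term, using \cite[Lem 2.7]{SoSo1} and a convexity argument for the case with interior lattice points. Your proposal never addresses $L(P_h)\le L-2$ and never verifies that the threshold $\beta(P)$ (whose three pieces come from precisely these sub-cases) is adequate. Third, your displayed inequality with the coefficients $(q-2)|Z_h^{(2)}|$ and $-|Z_{g_0}^{(2)}|$ is neither derived nor obviously correct; a careful inclusion--exclusion over $\{g=0\}\cup\{h=0\}$ gives $(q-1)|Z_h^{(2)}|$ unless you actually exploit the overlap, and you should either justify the $(q-2)$ or, as the paper does, simply use $N_f\le N_g+N_h$, which suffices. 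Finally, your sentence ``Feeding (a)--(c) \dots yields the bound in all cases'' asserts the conclusion but is exactly the part that requires the detailed case-by-case calibration against $\beta(P)$; as written this is a plan, not a proof.
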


\begin{pf} In the proof we make extensive use of the bounds proved in \cite{SoSo1}. The reader should 
keep in mind that even when a Laurent polynomial depends on $x,y$ only, we are still interested
in its zeros in $(\F_q^*)^3$. This is why the bounds from \cite{SoSo1}  are  multiplied by a factor of $(q-1)$
in the arguments below.

Consider $0\neq f\in\cL_P$. First, assume that $f\in\F_q[x^{\pm 1},y^{\pm 1}]$. Then, 
applying to $P_0$ Theorem 2.6, part (1) in \cite{SoSo1},  we obtain 
$$N_f\leq L\,(q-1)^2+(q-1)(\lfloor 2\sqrt{q}\rfloor-1)$$
for all $q\geq\beta(P)$. Here we used $L(P_0)\leq L$ and the fact that $\beta(P)$ is larger than the threshold that appears 
in \cite[Th 2.6 (1)]{SoSo1}. The case when $f=hz$ for some $h\in\F_q[x^{\pm 1},y^{\pm 1}]$ is analogous, with
$P_0$ replaced by $P_1$.

It remains to consider the case when $f=f_0+f_1z$ for non-zero $f_0,f_1\in\F_q[x^{\pm 1},y^{\pm 1}]$. We write
$f=gh$ where $g=g_0+g_1z$ is absolutely irreducible and $g_0,g_1,h\in\F_q[x^{\pm 1},y^{\pm 1}]$ are non-zero. 
As the Newton polytope $P_g$ is of positive dimension, we have
$$L(P)\geq L(P_f)=L(P_g+P_h)\geq L(P_g)+L(P_h)\geq 1+L(P_h)$$ 
and, hence, $L(P_h)\leq L-1$. We have two cases: (a) $L(P_h)= L-1$ and (b) $L(P_h)\leq L-2$.

(a) Assume $L(P_h)= L-1$ and, hence, $L(P_g)=1$. Suppose there is a maximal decomposition in $P_h$ which contains 
a summand with 4 lattice points (i.e. is equivalent to $T_0$). By adding $P_g$ we obtain a maximal decomposition
in $P$. Then \rc{width-one} implies that $P_g$ has at most 3 lattice points and, hence, $N_g\leq (q-1)^2$, see \rp{special}.
On the other hand, applying \cite[Th 2.6 (1)]{SoSo1} to $P_h$, we obtain
$$N_h\leq (L-1)\,(q-1)^2+(q-1)(\lfloor 2\sqrt{q}\rfloor-1)$$
for all $q\geq\beta(P)$. Here $L(P_h)=L-1$ and $\beta(P)$ is greater than the threshold in \cite[Th 2.6 (1)]{SoSo1}
since $P_{g_ih}\subseteq P_i$ and so the area of $P_h$ cannot exceed the area of each $P_i$, for $i=0,1$.
Combining the above two inequalities, we get
$$N_f\leq N_g+N_h\leq L\,(q-1)^2+(q-1)(\lfloor 2\sqrt{q}\rfloor-1),$$
for $q\geq\beta(P)$.

Now suppose there is no maximal decomposition in $P_h$ which contains 
a summand with 4 lattice points. Then by \cite[Th 2.6 (2)]{SoSo1} applied to $P_h$ we have
$N_h\leq (L-1)(q-1)^2$ for $q\geq \beta(P)$. 
To bound $N_g$ we use
\rp{Infinite} (note that $g_0$ and $g_1$ do not have common factors, as $g$ is irreducible):
$$N_g\leq (q-1)^2+V(P_{g_0},P_{g_1})q.$$
By picking a point $v\in P_h$ we see that $P_{g_i}+v\subseteq P_{g_ih}\subseteq P_i$, for $i=0,1$. Thus,
by monotonicity and invariance of the mixed volume (see \rs{mix-vol}) we have
 $$V(P_{g_0},P_{g_1})=V(P_{g_0}+v,P_{g_1}+v)\leq V(P_0,P_1).$$
Combining, we obtain
$$
N_f\leq N_g+N_h\leq L\,(q-1)^2+V(P_0,P_1)q.
$$
Since $\beta(P)\geq (V(P_0,P_1)+1)^2/4$, we see that the
right hand side of the above inequality is no greater than $L\,(q-1)^2+(q-1)(\lfloor 2\sqrt{q}\rfloor-1)$ for $q\geq\beta(P)$,
as required.

(b) Assume $L(P_h)\leq L-2$. We mimic the proof of  \cite[Th 2.6]{SoSo1}
to show 
\begin{equation}\label{e:N_h}
N_h\leq (L-1)(q-1)^2,\quad \text{for }\ q\geq \beta(P).
\end{equation} 
Then, repeating the argument at the end of case (a), we get the required bound.

Let $h=h_1\cdots h_k$ be a factorization into absolutely irreducible factors. Note
that $k\leq L-2$. First assume that none of the $P_{h_i}$ has interior lattice points. Then 
each $P_{h_i}$ either has lattice width one (as a polytope in $\R^2$) or is equivalent to $2\D^2$.
In the former case $N_{h_i}\leq (q-1)^2$ and in the latter case $N_{h_i}\leq (q+1)(q-1)$, see \cite[Cor 2.2]{SoSo1}.
Let $s$ be the number of $P_{h_i}$ equivalent to $2\D^2$. We have
$$L-2\geq L(P_h)\geq \sum_{i=1}^kL(P_{h_i})\geq 2s+(k-s)=k+s.$$
Then
\begin{align*}
N_h\leq \sum_{i=1}^kN_{h_i}&\leq s(q+1)(q-1)+(k-s)(q-1)^2=\left(2(k+s)+k(q-3)\right)(q-1)\\
&\leq \left(2(L-2)+(L-2)(q-3)\right)(q-1)=(L-2)(q-1)^2.
\end{align*}

Now assume that at least one $P_{h_i}$ has interior lattice points. Then, by \cite[Lem 2.7]{SoSo1}, we have
$$N_h\leq k(q-1)^2+(B\sqrt{q}+2)(q-1),$$
where $B=\Vol_2(P_h)-4k+3$. As before, the area of $P_h$ is no greater than the area of $P_i$, for $i=0,1$, so
we may replace $\Vol_2(P_h)$ in $B$ with the smaller of the $\Vol_2(P_i)$. To ensure that 
$$k(q-1)^2+(B\sqrt{q}+2)(q-1)\leq (L-1)(q-1)^2$$
we must choose $q$ large enough to satisfy
$$(L-k-1)q-B\sqrt{q}-(L-k+1)\geq 0,$$
which is a quadratic inequality in $\sqrt{q}$. Arguments similar to the ones in the proof of Theorem 2.6, part (1) in \cite{SoSo1} show that
it suffices to choose $q\geq \max\{37, (c+\sqrt{c^2+3})^2\}$, where 
$c=\min_{i=0,1}\{\Vol_2(P_i)/2-2L+11/2\}$.

\end{pf}

\begin{Ex}\label{Ex:width-one}
Let $P=T_0+D$, where $D=[0,e_1,e_3]$. This is a polytope of lattice width one as in \rt{width-one}, with
$P_0=T_0+I$ and $P_1=T_0$, where $I=[0,e_1]$, see \rf{width-one}. 
\begin{figure}[h]
\begin{center}
\includegraphics[scale=.35]{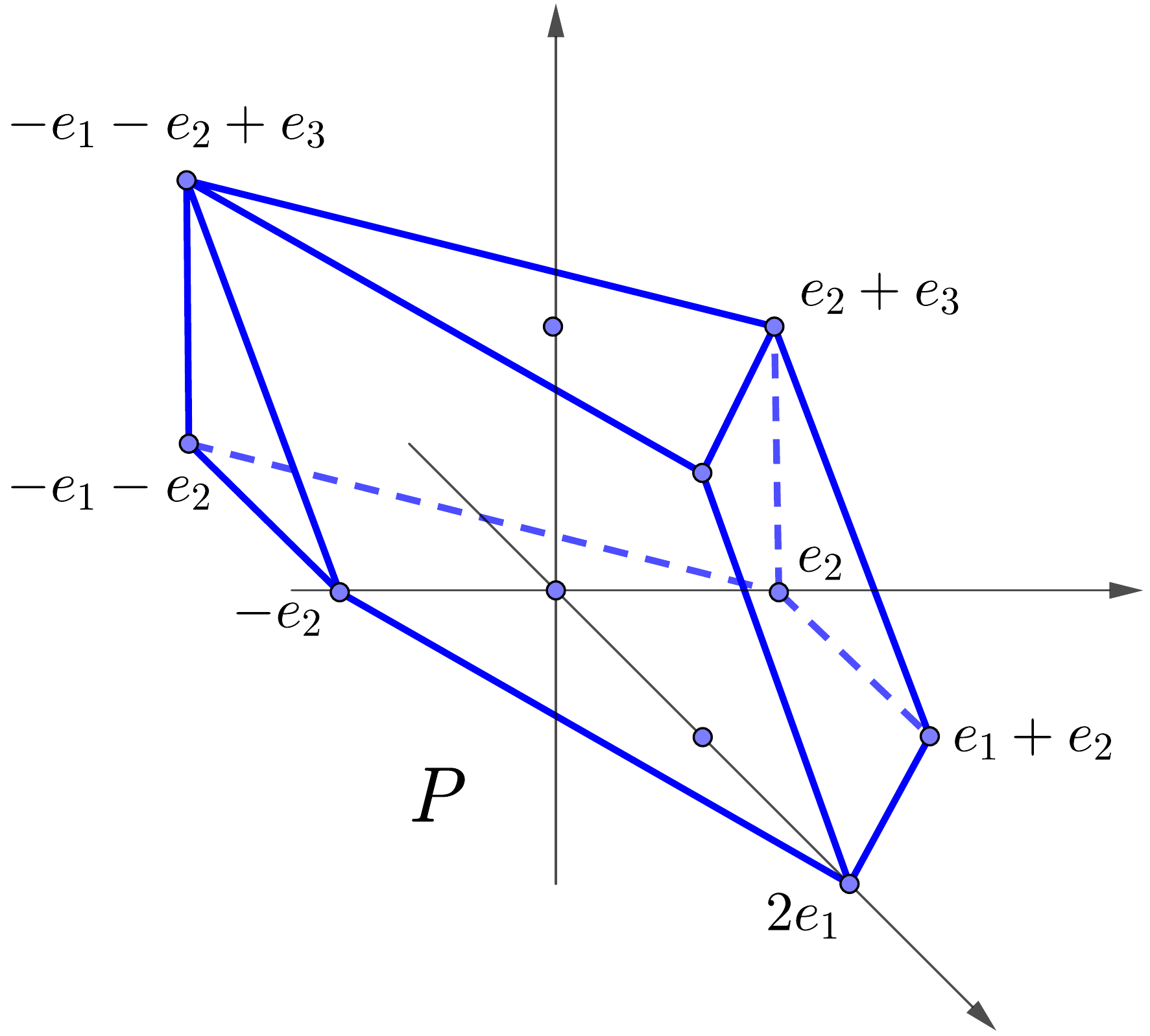}
\end{center}
\caption{Lattice polytope of width one with $L(P)=2$}
\label{F:width-one}
\end{figure}
It is easy to check that $L(P)=L(P_0)=2$ and
$L(P_1)=1$. Also, $\Vol_3(P)=15$, $\Vol_2(P_0)=7$, $\Vol_2(P_1)=3$, and so $V(P_0,P_1)=5$.
\rt{width-one} provides 
\begin{equation}\label{e:bound-ex}
N_P\leq 2(q-1)^2+(q-1)(\lfloor 2\sqrt{q}\rfloor-1)
\end{equation}
for all $q\geq 107$.
In fact, this bound holds for all $q\geq 5$. Indeed, since $L(P)=2$, any non-zero $f\in\cL_P$ is either absolutely
irreducible or is a product of two absolutely irreducible factors. In the first case
$N_f\leq (q-1)^2+5q$, by \rp{Infinite} and monotonicity of the mixed volume. In the second case,
at most one of the factors, say $f_1$, has 4 or more monomials. Then $N_{f_2}\leq (q-1)^2$.
If $P_{f_1}$ is equivalent to $T_0$ then $N_{f_1}\leq (q-1)^2+(q-1)(\lfloor 2\sqrt{q}\rfloor-1)$
and so
$$N_f\leq N_{f_1}+N_{f_2}\leq 2(q-1)^2+(q-1)(\lfloor 2\sqrt{q}\rfloor-1)$$
for all $q\geq 2$. Otherwise, $P_{f_1}$ has width one. Note that the top face of $P_{f_1}$ must be a
vertex, otherwise $P_1$ would contain a sum of two segments which contradicts $L(P_1)=1$.
But then the mixed volume of the top and the bottom faces of $P_{f_1}$ is zero and
 \rp{Infinite} produces $N_{f_1}\leq (q-1)^2$. Thus, 
 $$N_f\leq N_{f_1}+N_{f_2}\leq 2(q-1)^2.$$
It remains to notice that $(q-1)^2+5q\leq 2(q-1)^2+(q-1)(\lfloor 2\sqrt{q}\rfloor-1)$ for all $q\geq 5$.
We used {\sc Magma} to compare this bound with the actual values of $N_P$ for  $5\leq q\leq 11$:
\renewcommand{\arraystretch}{1.15}
\begin{center}
\begin{tabular}{|r||c|c|c|c|c|}
\hline
$q\ \ $ & $\ \ 5\ \ $ & $\ \ 7\ \ $ & $\ \ 8\ \ $ & $\ \ 9\ \ $ & $\ 11\ $  \\
\hline
$N_P\ $ & 40 & 90 & 112 & 160 & 250\\
\hline
{\rm bound in \re{bound-ex}} & 44 & 96 & 126 &168 & 250\\
\hline
\end{tabular}
\end{center}
In particular, this shows that the bound in \re{bound-ex} is sharp.
\end{Ex}
\medskip

\section{Examples of toric 3-fold codes of dimension 8 over small fields}\label{S:examples}

In this section we consider toric 3-fold codes defined by polytopes with $L(P)=1$ which have 8 lattice points and lattice width greater than one.  
Recall that a linear code $\cC$ is an $[n,k,d]_q$-code if it is defined over $\F_q$, has length $n$, dimension $k$, and minimum distance $d$. For a toric 3-fold code
$\cC_P$ we have $n=(q-1)^3$, $k=|P|$, and $d=n-N_P$, see \rs{motivation}.

Since our bound in \rc{toric-app} depends on $L(P)$ and not $k=|P|$, it is reasonable to look for good toric codes among the ones whose polytope $P$ has the largest 
possible value of $|P|$ for fixed $L(P)$. By \re{eight}, $|P|=8$ is the largest possible number of lattice points for 3-dimensional polytopes with $L(P)=1$.
According to the classification in \cite{BlancoSantos3,Josh},
there are exactly five lattice polytopes with $L(P)=1$, $|P|=8$, and width greater than one, up to $\AGL(3,\Z)$-equivalence.
Using {\sc Magma}, we computed the parameters of all five toric $3$-fold codes over $5\leq q\leq 13$.
For $q=5,7,8,11,13$ the best parameters are produced by a clean lattice simplex with four interior lattice points,
$P=\left[\begin{matrix}0 & 1 & 0 & 6  \\ 0 & 0 & 1 & 8 \\ 0 & 0 & 0 & 35\end{matrix}\right]$.
For $q=9$ the best parameters are produced by a 5-vertex polytope with three interior lattice points, 
$Q=\left[\begin{matrix}0 & 1 & 0 & 0 & 2  \\ 0 & 0 & 0 & 1 & 15 \\ 0 & 1 & 1 & 1 & 28\end{matrix}\right]$.
Although $P$ and $Q$ do not lie in $[0,q-2]^3$ for $5\leq q\leq 13$, one can check that their lattice points
are distinct modulo $(\Z/(q-1)\Z)^3$.

One feature of  toric 3-fold codes is that the length $n$ grows very quickly and already starting with $q=7$ 
their parameters  fall outside of the ranges set in linear code tables such as MinT and Grassl's table \cite{Gra,MinT}. 
Thus, we compare the parameters of $\cC_P$ and $\cC_Q$ with the Griesmer and the Gilbert-Varshamov bounds.
Recall that the Griesmer bound states that for any linear $[n,k,d]_q$-code
$n\geq \sum_{i=0}^{k-1}\lceil d/q^i\rceil$ and, hence, provides
an upper bound on $d$ when $k$ and $n$ are fixed, \cite[Th 1.1.43]{TVZ}. The Gilbert-Varshamov bound asserts that if
\begin{equation}\label{e:GV}
q^{n-k}>\sum_{i=0}^{d-2}{n-1\choose i}(q-1)^i
\end{equation}
then there exists a linear $[n,k,d]_q$-code \cite[Th 1.1.59]{TVZ}. Therefore, it is desirable to find linear codes whose minimum distance exceeds the largest
value of $d$ which satisfies \re{GV} with fixed $n$ and $k$. 
We have the following table of values of $d$ for $\cC_P$ and $\cC_Q$ for $5\leq q\leq 13$.
\renewcommand{\arraystretch}{1.15}
\begin{center}
\begin{tabular}{|r||c|c|c|c|c|c|}
\hline
$q\ \ $ & $\ \ 5\ \ $ & $\ \ 7\ \ $ & $\ \ 8\ \ $ & $\ \ 9\ \ $ & $\ 11\ $ & $\ 13\ $ \\
\hline
\hline
{\rm Griesmer upper bound} & 47 & 181 & 296 & 451 & 904 & 1590\\
\hline
$d(\cC_P)\ $ & 36 & \textcolor{red}{162} & 252 & 392 & \textcolor{red}{861} & \textcolor{red}{1535}\\
\hline
$d(\cC_Q)\ $ & 36 & 150 & 252 & \textcolor{purple}{416} & 850 & 1512\\
\hline
{\rm Glibert-Varshamov lower bound} & 37 & 159 & 268 &416 & 857 & 1519\\
\hline
\end{tabular}
\end{center}
\medskip
Although for $q=5$ the resulting codes have minimum distance far from the best known $[64,8,42]_5$-code in \cite{Gra}, for
$q=9$ the minimum distance of $\cC_Q$ (in purple) meets the Gilbert-Varshamov bound and for $q=7, 11, 13$ the minimum
distance of $\cC_P$ (in red) exceeds the Gilbert-Varshamov bound.

\end{document}